\newlength{\leftstackrelawd}
\newlength{\leftstackrelbwd}
\def\leftstackrel#1#2{\settowidth{\leftstackrelawd}%
	{${{}^{#1}}$}\settowidth{\leftstackrelbwd}{$#2$}%
	\addtolength{\leftstackrelawd}{-\leftstackrelbwd}%
	\leavevmode\ifthenelse{\lengthtest{\leftstackrelawd>0pt}}%
	{\kerneln-.5\leftstackrelawd}{}\mathrel{\mathop{#2}\limits^{#1}}}
\newcommand{\bdd}[1]{ \boldsymbol{#1} }
\newcommand{\mathbbb}[1]{\pmb{\mathbb{#1}}}
\newcommand{\unitvec}[1]{\bdd{#1}}
\def\@tvsp{\mathchoice{{}\mkern-4.5mu}{{}\mkern-4.5mu}{{}\mkern-2.5mu}{}}
\def\ltrivert{\left|\@tvsp\left|\@tvsp\left|}
\def\rtrivert{\right|\@tvsp\right|\@tvsp\right|}
\newcommand\tnorm[1]{\ltrivert #1 \rtrivert }
\newcommand\dinner[2]{[ #1, #2 ]}
\newcommand{\hcurl}{\bdd{H}(\mathrm{curl};\Omega)}
\newcommand{\hcurlgamma}{\bdd{H}_{\Gamma}(\mathrm{curl};\Omega)}
\let\orgdescriptionlabel\descriptionlabel
\renewcommand*{\descriptionlabel}[1]{%
	\let\orglabel\label
	\let\label\@gobble
	\phantomsection
	\edef\@currentlabel{#1}%
	\let\label\orglabel
	\orgdescriptionlabel{#1}%
}
\DeclareMathOperator{\grad}{\mathbf{grad}}
\DeclareMathOperator{\dive}{div}
\DeclareMathOperator{\curl}{curl}
\DeclareMathOperator{\Tr}{Tr}
\DeclareMathOperator{\image}{Im}
\DeclareMathOperator{\Kernel}{Ker}
\newtheorem{theorem}{Theorem}[section]
\newtheorem{lemma}{Lemma}[section]
\newtheorem{corollary}{Corollary}[section]
\newdefinition{remark}{Remark}
\crefname{equation}{}{}
\Crefname{equation}{Equation}{Equations}
\crefname{lemma}{Lemma}{Lemmas}
\crefname{theorem}{Theorem}{Theorems}
\crefname{corollary}{Corollary}{Corollaries}
\crefname{figure}{Figure}{Figures}
\Crefname{figure}{Figure}{Figures}
\numberwithin{equation}{section}
\journal{Computer Methods in Applied Mechanics and Engineering}
\begin{document}

\begin{frontmatter}



\title{Two and three dimensional $H^2$-conforming finite element approximations without $C^1$-elements \tnoteref{t1}}
\tnotetext[t1]{The authors acknowledge that this material is based upon work supported by the National Science Foundation under Award No. DMS-2324364 for the first author and Award No. DMS-2201487 for the second author.}


\author[1]{Mark Ainsworth\corref{cor1}}
\ead{mark\_ainsworth@brown.edu}

\author[2]{Charles Parker}
\ead{charles.parker@maths.ox.ac.uk}

\cortext[cor1]{Corresponding author}

\affiliation[1]{organization={Division of Applied Mathematics, Brown University},
            addressline={Box F, 182 George Street}, 
            city={Providence},
            postcode={02912}, 
            state={RI},
            country={USA}}

\affiliation[2]{organization={Mathematical Institute, University of Oxford},
	addressline={Andrew Wiles Building, Woodstock Road}, 
	city={Oxford},
	postcode={OX2 6GG}, 
	country={UK}}  

\begin{abstract}
	We develop a method to compute $H^2$-conforming finite element approximations in both two and three space dimensions using readily available finite element spaces. This is accomplished by deriving a novel, equivalent mixed variational formulation involving spaces with at most $H^1$-smoothness, so that conforming discretizations require at most $C^0$-continuity. The method is demonstrated on arbitrary order $C^1$-splines.
\end{abstract}

\begin{keyword}
$H^2$-conforming finite elements \sep $C^1$ finite elements



\MSC[2020] 65N30 \sep 65N12

\end{keyword}

\end{frontmatter}




\section{Introduction}
\label{sec:intro}

Fourth-order partial differential equations arise in a variety of applications including thin plates and shells \cite{Timoshenko59}, the separation of binary alloys \cite{CahnHill58}, condensed matter physics \cite{Escudero13}, and liquid crystals \cite{Pevnyi14}, among many others; see e.g. \cite{Greer06} and references therein. The variational formulation of such problems, or their linearizations, are well-posed in subspaces of $H^2$ so that a standard Galerkin scheme would require $H^2$-conforming finite element methods. Conforming schemes are attractive in that they retain the stability of the variational formulation. While the $H^2$-conforming 2D Argyris element appears in Firedrake \cite{FiredrakeUserManual,Kirby18,Kirby19} and scikit-fem \cite{skfem2020} and the 2D Hsieh-Clough-Tocher (HCT) element appears in FreeFEM \cite{FreeFEM}, the choice is otherwise rather limited. For instance, 3D $H^2$-conforming elements seem to be absent from all major packages. 

Why are $H^2$-conforming elements not more widely available? One possible answer is that most $H^2$-conforming elements are $C^1$-continuous \cite{Cia02} and imposing $C^1$-continuity is less straightforward then imposing $C^0$-continuity. Consider, for instance, the simple setting where the domain is the unit cube and the finite element space is the space of $C^1$ piecewise polynomials of degree $p \in \mathbb{N}$ in the special case of tetrahedral Freudenthal partitions \cite{Bey00} of the cube. Even counting the dimension of the associated $H^2$-conforming space is a nontrivial matter. In \cite{Hecklin06}, a count of the dimension of the space is given explicitly for all $p \in \mathbb{N}$ along with a choice of basis. However, the basis cannot be constructed in terms of local basis functions defined on a reference element. Instead, the basis is global in nature and thus not amenable to standard finite element procedures such as subassembly. On a general tetrahedral partition, even counting the dimension of the associated $C^1$ piecewise polynomials is an open problem. Standard $H^2$-conforming finite elements based on locally defined basis functions are, of course, possible but only if the degree is sufficiently high ($p \geq 5$ in 2D \cite{Argyris68} and $p \geq 9$ \cite[Chapter 18.2]{Lai07spline} in 3D).

In this work, we develop a method to compute $H^2$-conforming finite element approximations in both two and three space dimensions using finite element spaces that impose lower continuity than $C^1$, and which are more readily available. One benefit of such a scheme is that $H^2$-conforming approximations can be obtained using existing software packages even when $H^2$-spaces are not offered. This is accomplished by deriving a novel, equivalent mixed variational formulation involving spaces with at most $H^1$-smoothness, so that conforming discretizations require at most $C^0$-continuity. Particular choices of conforming spaces recover $H^2$-conforming approximations of the original $H^2$-variational problem, including  the space in \cite{Hecklin06}. The method generalizes our previous work \cite{AinCP23KirchI} to $H^2$-problems with lower order terms (e.g. time dependent problems) and to three space dimensions.

There are many alternative mixed methods for $H^2$-variational problems currently available -- most of which specifically focus on the biharmonic equation or Kirchhoff plate problem in 2D; see e.g. \cite{Amara02,Bramble83,Ciarlet74,Veiga07,Dest96,Rafetseder18}.  Methods that are applicable to more general $H^2$-variational problems in two and three dimensions are less common; see e.g. \cite{Chen18,Farrell22,Gallistl17}. In any case, none of the above methods will, in general, recover the $H^2$-conforming approximation. In contrast, our method uses standard spaces to recover the $H^2$-conforming approximation in two and three dimensions for problems with general boundary conditions.

The remainder of the paper is organized as follows. In \cref{sec:gen-problem-setting}, we describe the problem setting that naturally leads to the equivalent mixed variational formulation analyzed in \cref{sec:novel-mixed}. \Cref{sec:derive-mixed-fem-gen} details well-posed discretizations of the mixed formulation and their relationship to $H^2$-conforming approximations. The implementation of the method and several numerical examples are given in \cref{sec:iter-pen,sec:numerics}, while \cref{sec:time-dependent-problems} discusses the extension to time-dependent problems. For the 2D Morgan-Scott element \cite{MorganScott75}, we show in \cref{sec:morgan-scott} that the inf-sup condition associated with the discrete mixed problem is uniformly bounded away from zero with respect to both the polynomial degree and mesh size. Finally, in \cref{sec:prev-work}, we show precisely how the proposed method generalizes our previous work \cite{AinCP23KirchI}.

\section{Problem setting}
\label{sec:gen-problem-setting}

Let $\Omega \subset \mathbb{R}^d$, $d \in \{2,3\}$, be a connected Lipschitz polyhedral domain. The boundary of $\Omega$, denoted $\Gamma := \partial \Omega$, is partitioned into disjoint sets $\Gamma_c$, $\Gamma_s$, and $\Gamma_f$. For $d=2$, we assume that there exists simply-connected Lipschitz polygons $\Omega_0, \Omega_1, \ldots, \Omega_{N_H} \subset \mathbb{R}^2$ such that $\bar{\Omega}_j \subsetneq \Omega_0$ for $1 \leq j \leq N_H$, $\bar{\Omega}_i \cap \bar{\Omega}_j = \emptyset$ for $1 \leq i,j \leq N_H$ with $i \neq j$, and $\Omega = \Omega_0 \setminus \cup_{j=1}^{N_H} \bar{\Omega}_j$. For $d=3$, we assume that the geometry of $\Gamma_{cs} := \Gamma_c \cup \Gamma_s$ satisfies the following conditions \cite{Hiptmair09}: For each of the $N$ connected components $\Gamma_{cs}^{(i)}$, $1 \leq i \leq N$, of $\Gamma_{cs}$, there exists  an open Lipschitz domain $\Omega_i \subset \mathbb{R}^3$ such that $\bar{\Omega}_i \cap \bar{\Omega} = \bar{\Gamma}_{cs}^{(i)}$, $\Omega_i \cap \Omega = \emptyset$, the distance between $\Omega_i$ and $\Omega_j$ is strictly positive for $i \neq j$, and $\bar{\Omega} \cup \bar{\Omega}_1 \cup \ldots \cup \bar{\Omega}_{N}$ is a Lipschitz domain.

We consider $H^2(\Omega)$ variational problems of the form
\begin{align}
	\label{eq:h2-bilinear}
	w \in H^2_{\Gamma}(\Omega) : \qquad B(w, v) = F(v) \qquad \forall v \in H^2_{\Gamma}(\Omega),
\end{align} 
where 
\begin{align*}
	H^2_{\Gamma}(\Omega) := \{ v \in H^2(\Omega) : v|_{\Gamma_{cs}} = 0 \text{ and } \partial_n v|_{\Gamma_c} = 0 \},
\end{align*}
$F$ is a linear functional on $H^2_{\Gamma}(\Omega)$, and $B : H^2(\Omega) \times H^2(\Omega) \to \mathbb{R}$  is a bilinear form with the following properties.
\begin{itemize}
	\item (Boundedness) There exists $M_B > 0$ such that
	\begin{align}
		\label{eq:b-bilinear-bounded}
		|B(u, v)| \leq M_B \|u\|_2 \|v\|_2 \qquad \forall u, v \in H^2(\Omega).
	\end{align}
	
	\item (Inf-sup condition) There exists $\alpha > 0$ such that
	\begin{align}
		\label{eq:b-bilinear-infsup}
		\inf_{0 \neq u \in H^2_{\Gamma}(\Omega)} \sup_{0 \neq v \in H^2_{\Gamma}(\Omega)} \frac{ B(u, v) }{ \|u\|_2 \|v\|_2 } \geq \alpha.
	\end{align} 
	
	\item (Uniqueness condition) For every $v \in H^2_{\Gamma}(\Omega)$, there holds
	\begin{align}
		\label{eq:b-bilinear-uniqueness}
		\sup_{u \in H^2_{\Gamma}(\Omega)} B(u, v) > 0.
	\end{align}
\end{itemize}
Then, problem \cref{eq:h2-bilinear} is well-posed \cite[Theorem 2.6]{GuerErn04}.

Let $\mathcal{T}$ be a shape regular \cite[Definition (4.4.13)]{Brenner08} partitioning of $\Omega$ into simplices such that the nonempty intersection of any two distinct elements from $\mathcal{T}$ is a single common sub-simplex of both elements. Given an $H^2$-conforming finite element space $\mathbb{W} \subset H^2(\Omega)$ on $\mathcal{T}$, typically consisting of $C^1$-continuous piecewise polynomials of an arbitrary but fixed degree $p$, the corresponding $H^2$-conforming Galerkin finite element scheme for \cref{eq:h2-bilinear} reads
\begin{align}
	\label{eq:h2-bilinear-fem}
	w_{\mathcal{T}} \in \mathbb{W}_{\Gamma} : \qquad B(w_{\mathcal{T}}, v) = F(v) \qquad \forall v \in \mathbb{W}_{\Gamma},
\end{align}
where $\mathbb{W}_{\Gamma} := \mathbb{W} \cap H^2_{\Gamma}(\Omega)$. If there exists a positive constant $\alpha_{\mathcal{T}} > 0$, possibly depending on the dimension of $\mathbb{W}_{\Gamma}$, such that the following discrete analogue of the inf-sup condition \cref{eq:b-bilinear-infsup} holds:
\begin{align*}
	\alpha_{\mathcal{T}} &:= \inf_{0 \neq u \in\mathbb{W}_{\Gamma}} \sup_{0 \neq v \in \mathbb{W}_{\Gamma}} \frac{ B(u, v) }{ \|u\|_2 \|v\|_2 } > 0, 
\end{align*} 
then \cref{eq:h2-bilinear-fem} has a unique solution. Moreover, standard error estimates (e.g. \cite[Lemma 2.28]{GuerErn04}) show that $w_{\mathcal{T}}$ is, up to a constant, the best approximation to $w$ in $\mathbb{W}_{\Gamma}$:
\begin{align*}
	\| w - w_{\mathcal{T}} \|_{2} \leq \left( 1 + \frac{M_B}{\alpha_{\mathcal{T}}} \right) \inf_{v \in \mathbb{W}_{\Gamma}} \|w - v\|_{2}.
\end{align*}

\subsection{Typical Problems}

In practice, the bilinear form $B(\cdot,\cdot)$ often has more structure beyond that indicated by the abstract assumptions \cref{eq:b-bilinear-bounded,eq:b-bilinear-infsup,eq:b-bilinear-uniqueness}. For example, in the case of the biharmonic equation $B(\cdot,\cdot)$ depends only on the \textit{gradient} of its arguments i.e.
\begin{align}
	\label{eq:biharmonic-a}
	B(w, v) = a(\grad w, \grad v),
\end{align} 
where the bilinear form $a(\bdd{\theta}, \bdd{\psi}) := (\dive \bdd{\theta}, \dive \bdd{\psi})$ is bounded on $\bdd{H}^1(\Omega)$. In a similar vein, the bilinear form $B(\cdot,\cdot)$ for a Kirchhoff plate in equilibrium (see e.g. Chapter 4 of \cite{Timoshenko59}) again has the form \cref{eq:biharmonic-a}, this time with
\begin{align}
	\label{eq:kirchhoff-a}
	a(\bdd{\theta}, \bdd{\psi}) :=  D\left[ (1-\nu)(\bdd{\varepsilon}(\bdd{\theta}), \bdd{\varepsilon}(\bdd{\psi})) + \nu (\dive \bdd{\theta}, \dive \bdd{\psi}) \right],
\end{align}
where $\bdd{\varepsilon}(\bdd{\theta}) = (\grad \bdd{\theta} + (\grad \bdd{\theta})^T)/2$ and $D$ and $\nu$ are positive physical parameters so that the corresponding bilinear form $a(\cdot,\cdot)$ is again bounded on $\bdd{H}^1(\Omega)$. Here, $\Gamma_c$, $\Gamma_s$, and $\Gamma_f$ correspond to the parts of the boundary where the plate is clamped, simply-supported, and left free. 

Discretizations of time dependent problems offer another useful example. Consider, for instance, the time dependent biharmonic equation
\begin{align*}
	\partial_t w + \Delta^2 w = G \quad \text{with} \quad w|_{t=0} = g 
\end{align*}
in variational form: find $w(t) \in H^2_{\Gamma}(\Omega)$ such that
\begin{align*}
	\frac{d}{dt} (w(t), v) + (\Delta w(t), \Delta v) = G(v) \qquad \forall v \in H^2_{\Gamma}(\Omega)
\end{align*}
with $w(0) = g$. This problem can be accommodated in the foregoing framework using Rothe's method \cite{Rothe30} to first discretize in time (using e.g. Crank-Nicolson with a step size $\Delta t > 0$) and thereby obtain the semi-discrete scheme: For $n \in \mathbb{N}_0$, find $w_{n} \in H^2_{\Gamma}(\Omega)$ such that
\begin{align}
	\label{eq:b-time-dependent}
	(w_{n}, v) + \frac{\Delta t}{2} (\Delta w_{n}, \Delta v) =  (w_{n-1}, v) - \frac{\Delta t}{2} (\Delta w_{n-1}, \Delta v) + \Delta t G(v)  \qquad \forall v \in H^2_{\Gamma}(\Omega)
\end{align}
with $w_0 = g$. This problem is of the form \cref{eq:h2-bilinear} where is given by
\begin{align*}
	B(w, v) = \frac{\Delta t}{2} (\Delta w, \Delta v) + (w, v) = \frac{\Delta t}{2} a(\grad w, \grad v) + (w, v),
\end{align*}
where $a(\cdot,\cdot)$ is the bilinear form that appears in the equilibrium case \cref{eq:biharmonic-a}. Other fourth-order problems, such as the Cahn-Hilliard equations \cite{CahnHill58}, give rise to problems of the form \cref{eq:h2-bilinear} with the same structure as in \cref{eq:b-time-dependent}. The same structure also arises for problems with lower order terms. For example, the weak formulation of the PDE $\Delta^2 u - \Delta u + \bdd{b} \cdot \grad u + u = f$ with $\Gamma_c = \Gamma$ reads: Find $u \in H^2_{\Gamma}(\Omega)$ such that
\begin{align}
	\label{eq:gen-fourth-order-weak}
	(\Delta u, \Delta v) + (\grad u, \grad v) + (\bdd{b} \cdot \grad u, v) + (u, v) = (f, v) \qquad \forall v \in H^2_{\Gamma}(\Omega). 
\end{align}

\subsection{Structural assumption}

In view of the above examples, we assume that the bilinear form $B(\cdot,\cdot)$ can be written in the form 
\begin{align}
	\label{eq:b-bilinear-splitting}
	B(w, v) = a(\grad w, \grad v) + c(w, v) \qquad \forall w, v \in H^2(\Omega),
\end{align}
where $a : \bdd{H}^1(\Omega) \times \bdd{H}^1(\Omega) \to \mathbb{R}$ and $c : H^1(\Omega) \times H^1(\Omega) \to \mathbb{R}$ are bilinear forms for which there exists a constant $M > 0$ such that 
\begin{subequations}
	\label{eq:ac-bilinear-bounded}
	\begin{alignat}{2}
		|a(\bdd{\theta}, \bdd{\psi})| &\leq M \|\bdd{\theta}\|_1 \|\bdd{\psi}\|_1 \qquad & &\forall \bdd{\theta}, \bdd{\psi} \in \bdd{H}^1(\Omega), \\
		|c(w, v)| &\leq M \|w\|_1 \|v\|_1 \qquad & &\forall w, v \in H^1(\Omega),
	\end{alignat}
\end{subequations}
where the $H^1(\Omega)$-norm of a vector-valued function is the sum of the $H^1(\Omega)$-norm of its components. Similarly, we assume that the right hand side $F$ can be written as the sum of two linear functionals $F_1 \in \bdd{H}^1(\Omega)^*$ and $F_2 \in H^1(\Omega)^*$ satisfying
\begin{align}
	\label{eq:F-functional-splitting}
	F(v) = F_1(\grad v) + F_2(v) \qquad \forall v \in H^2(\Omega).
\end{align}
The decompositions \cref{eq:b-bilinear-splitting,eq:F-functional-splitting} need not be unique. For instance, the bilinear form appearing in the example in \cref{eq:gen-fourth-order-weak} with $\bdd{b} = \bdd{0}$ satisfies the decomposition condition with either $a(\cdot,\cdot) = (\dive \cdot, \dive \cdot)$ and $c(\cdot,\cdot) = (\grad \cdot, \grad \cdot) + (\cdot,\cdot)$ or alternatively $a(\cdot,\cdot) = (\dive \cdot, \dive \cdot) + (\cdot,\cdot)$ and $c(\cdot,\cdot) = (\cdot,\cdot)$. In both cases, $a(\cdot,\cdot)$ and $c(\cdot,\cdot)$ are bounded on $\bdd{H}^1(\Omega)$ and $H^1(\Omega)$ respectively. The ensuing analysis makes use of the existence of decompositions \cref{eq:b-bilinear-splitting,eq:F-functional-splitting}, but does not require their uniqueness.

\section{A novel mixed formulation for the continuous problem}
\label{sec:novel-mixed}

The original primal variational problem \cref{eq:h2-bilinear} can be written as a mixed formulation involving spaces that have weaker smoothness requirements than $H^2(\Omega)$ as follows. Let $\bdd{\gamma} = \grad w$ and observe that, since $w \in H^2_{\Gamma}(\Omega)$, the gradient $\bdd{\gamma}$ belongs to the space
\begin{align*}
	\bdd{G}_{\Gamma}(\Omega) := \{ \bdd{\psi} \in H^1(\Omega)^d : \bdd{\psi}|_{\Gamma_c} = \bdd{0} \text{ and } \bdd{\pi}_T \bdd{\psi}|_{\Gamma_s} = \bdd{0} \},
\end{align*}
where $\bdd{\pi}_T$ is the tangential projection operator defined by
\begin{align*}
	\bdd{\pi}_{T}\bdd{\psi} :=  \bdd{\psi} - (\bdd{\psi} \cdot \unitvec{n})\unitvec{n} \qquad \text{on } \Gamma.	
\end{align*}
Observing that $w \in H^2_{\Gamma}(\Omega) \subset H^1_{\Gamma}(\Omega)$, where
\begin{align*}
	H^1_{\Gamma}(\Omega) := \{ \tilde{v} \in H^1(\Omega) : \tilde{v}|_{\Gamma_{cs} } = 0 \},
\end{align*}
it follows that $(w, \bdd{\gamma}) \in H^1_{\Gamma}(\Omega) \times \bdd{G}_{\Gamma}(\Omega)$ satisfies
\begin{align}
	\label{eq:h2-variational-with-constraint}
	a(\bdd{\gamma}, \grad v) + c(w, v) &= F_1(\grad v) + F_2(v) \qquad \forall v \in H^2_{\Gamma}(\Omega).
\end{align}
\Cref{eq:h2-variational-with-constraint} holds for functions belonging to $H^2_{\Gamma}(\Omega)$. It is not difficult to see that, with the above notation, $H^2_{\Gamma}(\Omega)$ can be precisely characterized as the following subspace of $H^1_{\Gamma}(\Omega)$:
\begin{align}
	\label{eq:h2gamma-h1gamma-id}
	H^2_{\Gamma}(\Omega) = \{ \tilde{v} \in H^1_{\Gamma}(\Omega) : \grad \tilde{v} \in \bdd{G}_{\Gamma}(\Omega)  \}.
\end{align}
Hence, viewing the relation $\bdd{\gamma} = \grad w$ as a side constraint for \cref{eq:h2-variational-with-constraint} to be imposed using a Lagrange multiplier $\bdd{\phi}$, we arrive at the following mixed problem: Find $(\tilde{w}, \bdd{\gamma}, \bdd{\phi})  \in H^1_{\Gamma}(\Omega) \times \bdd{G}_{\Gamma}(\Omega) \times \bdd{\Phi}_{\Gamma}(\Omega)$ such that
\begin{subequations}
	\label{eq:h2-mixed-problem}
	\begin{alignat}{2}
		\label{eq:h2-mixed-problem-1}
		a(\bdd{\gamma}, \bdd{\psi}) + c(\tilde{w}, \tilde{v}) + \langle \bdd{\Xi}(\tilde{v}, \bdd{\psi}), \bdd{\phi} \rangle &= F_1(\bdd{\psi}) + F_2(\tilde{v}) \qquad & &\forall (\tilde{v}, \bdd{\psi}) \in  H^1_{\Gamma}(\Omega) \times \bdd{G}_{\Gamma}(\Omega), \\
		\label{eq:h2-mixed-problem-2}
		\langle \bdd{\Xi}(\tilde{w}, \bdd{\gamma}), \bdd{\eta} \rangle &= 0 \qquad & &\forall \bdd{\eta} \in \bdd{\Phi}_{\Gamma}(\Omega), 
	\end{alignat}
\end{subequations}
where the operator $\bdd{\Xi}$ is defined by the rule
\begin{align}
	\label{eq:xi-def}
	\bdd{\Xi}(\tilde{v}, \bdd{\psi}) := \grad \tilde{v} - \bdd{\psi} \qquad \forall (\tilde{v}, \bdd{\psi}) \in  H^1_{\Gamma}(\Omega) \times \bdd{G}_{\Gamma}(\Omega).
\end{align}
Observe that we write $\tilde{w}$ rather than $w$ in \cref{eq:h2-mixed-problem} to reflect the fact that, starting with \cref{eq:h2-mixed-problem}, it is not obvious (at this stage) that $\tilde{w}$ satisfies the original variational problem \cref{eq:h2-bilinear}. Later, we shall see that if the function space $\bdd{\Phi}_{\Gamma}(\Omega)$ and duality pairing $\langle \cdot, \cdot \rangle$ are chosen appropriately, then $\tilde{w}$ does indeed satisfy \cref{eq:h2-bilinear} and, as a consequence $w = \tilde{w}$.

It remains to choose an appropriate function space $\bdd{\Phi}_{\Gamma}(\Omega)$ and duality pairing $\langle \cdot, \cdot \rangle$ with which to impose the side constraint. We make the natural choice and take $\bdd{\Phi}_{\Gamma}(\Omega) := (\image \bdd{\Xi})^*$, where
\begin{align*}
	\image \bdd{\Xi} := \{ \bdd{\Xi}(\tilde{v}, \bdd{\psi}) : (\tilde{v}, \bdd{\psi}) \in  H^1_{\Gamma}(\Omega) \times \bdd{G}_{\Gamma}(\Omega) \},
\end{align*}
so that $\image \bdd{\Xi} \subset {L}^2(\Omega)^d$ and the duality pairing $\langle \cdot, \cdot \rangle$ is taken to be the extension of the $L^2(\Omega)^d$ inner-product to $\image \bdd{\Xi} \times (\image \bdd{\Xi})^*$.

\subsection[Characterizing $(\image \Xi)^*$]{Characterizing $(\image \bdd{\Xi})^*$}

The mixed formulation \cref{eq:h2-mixed-problem} achieves the primary goal of reducing the maximum smoothness of the involved spaces: $H^1_{\Gamma}(\Omega) \times \bdd{G}_{\Gamma}(\Omega) \subset H^1(\Omega) \times H^1(\Omega)^d$ and $L^2(\Omega)^d \subset (\image \bdd{\Xi})^*$. However, the presence of the nonstandard space $(\image \bdd{\Xi})^*$ means that neither the well-posedness of \cref{eq:h2-mixed-problem} nor its discretization are clear-cut. For instance, while it is clear from definition \cref{eq:xi-def} that $\image \bdd{\Xi} \subseteq {L}^2(\Omega)^d$, we will show in this section that $\image \bdd{\Xi} \neq {L}^2(\Omega)^d$. That is, $(\image \bdd{\Xi})^*$ contains objects that are less smooth than ${L}^2(\Omega)^d$ which has repercussions both in the analysis of \cref{eq:h2-mixed-problem} and the development of a Galerkin scheme for \cref{eq:h2-mixed-problem}. 

The first step in analyzing the mixed problem \cref{eq:h2-mixed-problem} is to develop a concrete characterization for the image space $\image \bdd{\Xi}$. For $(\tilde{v}, \bdd{\psi}) \in  H^1_{\Gamma}(\Omega) \times \bdd{G}_{\Gamma}(\Omega)$, there holds 
\begin{align*}
	\bdd{\Xi}(\tilde{v}, \bdd{\psi}) = \grad \tilde{v} - \bdd{\psi} \in {L}^2(\Omega)^d \quad \text{and} \quad \curl \bdd{\Xi}(\tilde{v}, \bdd{\psi}) = -\curl \bdd{\psi} \in L^2(\Omega)^{d^*}, 
\end{align*}
where $d^* = 1$ if $d = 2$ and $d^* = 3$ if $d = 3$. Here, we use $\curl$ to denote either the scalar curl of a 2D vector  (i.e. $\curl \bdd{\theta} := \partial_x \theta_2 - \partial_y \theta_1$) or the usual curl operator of a 3D vector depending on the context. Consequently, $\image \bdd{\Xi} \subseteq \hcurl$, where
\begin{align*}
	\hcurl := \{ \bdd{\theta} \in {L}^2(\Omega)^d :  \curl \bdd{\theta}  \in 
	L^2(\Omega)^{d^*} 
	\} \quad \text{with} \quad \|\bdd{\theta}\|_{\curl}^2 := \|\bdd{\theta}\|^2 + \|\curl \bdd{\theta}\|^2.
\end{align*}

We shall also need to characterize the traces of functions in $\image \bdd{\Xi}$. The tangential trace operator $\Tr$ given by
\begin{align*}
	\hcurl \ni \bdd{\theta} \mapsto \Tr \bdd{\theta} := \begin{cases}
		\unitvec{t} \cdot \bdd{\theta}|_{\Gamma} & \text{if } d = 2 \\
		\bdd{\pi}_{T} \bdd{\theta} |_{\Gamma} & \text{if } d = 3
	\end{cases} \in H^{-1/2}(\Gamma)^{d^*}
\end{align*}
is well-defined on $\hcurl$ (see e.g. \cite[p. 27 Theorem 2.5]{GiraultRaviart86} for the case $d = 2$ and \cite{Buffa01I,Buffa01II} for the case $d=3$). For any $\hat{w} \in C^{\infty}(\bar{\Omega})^{d^*}$ with $\hat{w}|_{\Gamma_f} = 0$, there holds
\begin{align*}
	\langle \Tr \bdd{\Xi}(\tilde{v}, \bdd{\psi}), \hat{w} \rangle_{\Gamma} = \langle \Tr \grad \tilde{v}, \hat{w} \rangle_{\Gamma} + (\Tr \bdd{\psi}, \hat{w})_{\Gamma} = \begin{cases}
		-(\tilde{v}, \unitvec{t} \cdot \grad  \hat{w})_{\Gamma} = 0 & d=2, \\
		-(\tilde{v}, \dive \bdd{\pi}_{T } \hat{w})_{\Gamma} = 0 & d=3,
	\end{cases}
\end{align*}
where $\langle \cdot, \cdot \rangle_{\Gamma}$ denotes the extension of the $L^2(\Gamma)^{d^*}$ inner-product to the trace space and its dual. By density (see e.g. \cite[Theorem 3.1]{Bernard11}), we obtain
\begin{align*}
	\langle \Tr \bdd{\Xi}(\tilde{v}, \bdd{\psi}) , \hat{w} \rangle_{\Gamma} = 0 \qquad \forall \hat{w} \in H^1(\Omega)^{d^*} : w|_{\Gamma_f} \equiv 0. 
\end{align*} 
As a result, $\image \bdd{\Xi}$ is contained in the closed subspace of $\hcurl$ consisting of functions with vanishing tangential trace on $\Gamma \setminus \Gamma_f$:
\begin{align*}
	\hcurlgamma := \{ \bdd{\theta} \in \hcurl : \langle \Tr \bdd{\theta}, \hat{w} \rangle_{\Gamma}  = 0 \quad \forall \hat{w} \in H^1(\Omega)^{d^*} : w|_{\Gamma_f} \equiv 0 \}.
\end{align*}
The following result shows that in fact $\image \bdd{\Xi}$ can be identified with $\hcurlgamma$:
\begin{lemma}
	\label{lem:hrotgamma-grad-h10-decomp}
	For every $\bdd{\theta} \in \hcurlgamma$, there exists  $(\tilde{v}, \bdd{\psi}) \in H^1_{\Gamma}(\Omega) \times \bdd{G}_{\Gamma}(\Omega)$ such that
	\begin{align}
		\label{eq:hrotgamma-h10-h10-decomposition-equiv-norm}
		\bdd{\Xi}(\tilde{v}, \bdd{\psi}) = \bdd{\theta} \quad \text{and} \quad C_1 \left( \|\tilde{v}\|_1 + \|\bdd{\psi}\|_1 \right) \leq   \|\bdd{\theta}\|_{\curl} \leq C_2 \left( \|\tilde{v}\|_1 + \|\bdd{\psi}\|_1 \right),
	\end{align}
	where $C_1$ and $C_2$ are positive constants independent of $\bdd{\theta}$. 	Consequently, $\image \bdd{\Xi} = \hcurlgamma$.
\end{lemma}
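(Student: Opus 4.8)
The plan is to recognise the claim as a \emph{regular (smoothed Helmholtz) decomposition} of $\hcurlgamma$ that respects the partial essential boundary conditions built into $\bdd{G}_{\Gamma}(\Omega)$, and to read the norm equivalence off from the boundedness of that decomposition. First dispose of the routine parts: the inclusion $\image\bdd{\Xi}\subseteq\hcurlgamma$ was already obtained in the discussion preceding the lemma, and for any $(\tilde v,\bdd\psi)\in H^1_{\Gamma}(\Omega)\times\bdd{G}_{\Gamma}(\Omega)$ one has $\bdd{\Xi}(\tilde v,\bdd\psi)=\grad\tilde v-\bdd\psi$ together with $\curl\bdd{\Xi}(\tilde v,\bdd\psi)=-\curl\bdd\psi$, which immediately yields the upper estimate $\|\bdd{\Xi}(\tilde v,\bdd\psi)\|_{\curl}\le C_2(\|\tilde v\|_1+\|\bdd\psi\|_1)$. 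So the content of the lemma is the surjectivity statement: for every $\bdd\theta\in\hcurlgamma$ there exist $\bdd\psi\in\bdd{G}_{\Gamma}(\Omega)$ and $\tilde v\in H^1_{\Gamma}(\Omega)$ with $\grad\tilde v-\bdd\psi=\bdd\theta$, \emph{produced by a bounded linear operation}; given this, the lower estimate in \cref{eq:hrotgamma-h10-h10-decomposition-equiv-norm} is obtained by tracking norms through the construction and applying a Poincar\'e inequality to bound $\|\tilde v\|$ by $\|\grad\tilde v\|=\|\bdd\theta+\bdd\psi\|$ (using $\tilde v|_{\Gamma_{cs}}=0$; when $\Gamma_{cs}=\emptyset$ one first normalises $\tilde v$ to zero mean, which does not change $\bdd{\Xi}(\tilde v,\bdd\psi)$).

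To construct $(\tilde v,\bdd\psi)$ from $\bdd\theta$ I would proceed in two moves. \textbf{Move 1 (mollify the curl).} Using the standard bounded right inverse of $\curl$ coming from full $H^2$-regularity on an enlarged \emph{smooth} domain, produce $\bdd z_0\in\bdd H^1(\Omega)^d$ with $\curl\bdd z_0=-\curl\bdd\theta$ in $\Omega$ and $\|\bdd z_0\|_1\le C\|\curl\bdd\theta\|$: extend $\bdd\theta$ to a field in $\bdd H(\curl;B)$ on a ball $B\supset\bar\Omega$, solve a scalar Poisson problem for $d=2$ ($\Delta Q=\curl\bdd\theta$ with $Q\in H^1_0(B)$, whence $\bdd z_0=\vcurl Q|_{\Omega}$) or a vector-potential problem for $d=3$, and, using the geometric hypotheses (the polygon-with-holes structure for $d=2$; the exterior fillers $\Omega_1,\dots,\Omega_N$ of the components of $\Gamma_{cs}$ with $\bar\Omega\cup\bar\Omega_1\cup\dots\cup\bar\Omega_N$ Lipschitz for $d=3$, cf.\ \cite{Hiptmair09}) together with the trace/period conditions encoded in $\bdd\theta\in\hcurlgamma$, add a finite-dimensional curl-free correction to $\bdd z_0$, compatible with the boundary conditions of $\bdd{G}_{\Gamma}(\Omega)$, so that $\bdd F_0:=\bdd\theta+\bdd z_0$ is not only curl-free but \emph{exact}: $\bdd F_0=\grad g_0$ for some $g_0\in H^1(\Omega)$, with $\|g_0\|_1+\|\bdd z_0\|_1\le C\|\bdd\theta\|_{\curl}$. \textbf{Move 2 (fix the boundary data and extract the potential).} Look for $\bdd\psi$ of the form $\bdd\psi=\bdd z_0+\grad\rho$ with $\rho\in H^2(\Omega)$, which preserves $\curl\bdd\psi=-\curl\bdd\theta$ and makes $\bdd\theta+\bdd\psi=\bdd F_0+\grad\rho=\grad(g_0+\rho)$; one then needs $\rho$ with prescribed jet $\grad\rho=-\bdd z_0$ on $\Gamma_c$, $\bdd\pi_{T}\grad\rho=-\bdd\pi_{T}\bdd z_0$ on $\Gamma_s$, and $\rho=-g_0$ on $\Gamma_{cs}$, so that $\bdd\psi=\bdd z_0+\grad\rho\in\bdd{G}_{\Gamma}(\Omega)$ and $\tilde v:=g_0+\rho\in H^1_{\Gamma}(\Omega)$. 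The point is that this data is \emph{compatible}, and a bounded $\rho\in H^2(\Omega)$ realising it exists, precisely because $\bdd\theta$ has vanishing tangential trace on $\Gamma_{cs}$: from $\grad g_0=\bdd\theta+\bdd z_0$ one gets $\bdd\pi_{T}\grad g_0=\bdd\pi_{T}\bdd z_0\in H^{1/2}(\Gamma_{cs})^{d^*}$ on $\Gamma_{cs}$, so $g_0|_{\Gamma_{cs}}\in H^{3/2}(\Gamma_{cs})$ and its surface gradient matches the prescribed tangential part of $\grad\rho$. Unwinding, $\grad\tilde v-\bdd\psi=\bdd\theta$ with $(\tilde v,\bdd\psi)\in H^1_{\Gamma}(\Omega)\times\bdd{G}_{\Gamma}(\Omega)$, so $\image\bdd{\Xi}=\hcurlgamma$.

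Alternatively, and more economically, Moves 1 and 2 can be replaced by a direct appeal to a regular decomposition of $\hcurlgamma$ with mixed boundary conditions valid under the hypotheses of \cite{Hiptmair09} -- namely $\bdd\theta=\grad\tilde v+\bdd z$ with $\tilde v\in H^1_{\Gamma}(\Omega)$, $-\bdd z\in\bdd{G}_{\Gamma}(\Omega)$, and $\|\tilde v\|_1+\|\bdd z\|_1\le C\|\bdd\theta\|_{\curl}$ -- which, combined with the characterisation \cref{eq:h2gamma-h1gamma-id} of $H^2_{\Gamma}(\Omega)$, gives the lemma at once. I expect \textbf{Move 2} (equivalently, the mixed-boundary part of such a decomposition) to be the main obstacle: one must reconcile the \emph{weak} tangential-trace condition defining $\hcurlgamma$, the topology of $\Omega$, and the two partial essential boundary conditions, so that the scalar potential genuinely vanishes on \emph{every} connected component of $\Gamma_{cs}$ -- not merely is locally constant there -- and the $\bdd H^1$ remainder lands exactly in $\bdd{G}_{\Gamma}(\Omega)$, all with uniform constants. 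This is precisely the role of the geometric assumptions on $\Omega$ and $\Gamma_{cs}$.
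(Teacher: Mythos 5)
Your proposal correctly identifies the crux of the lemma: surjectivity of $\bdd{\Xi}$ onto $\hcurlgamma$ by means of a \emph{regular decomposition} with mixed (Dirichlet on $\Gamma_{cs}$, natural on $\Gamma_f$) boundary conditions, with the norm equivalence read off from the boundedness of the decomposition and the boundedness of $\bdd{\Xi}$. Your second, "more economical" paragraph coincides with the paper's proof, which for $d=3$ simply invokes \cite[Theorem~5.2]{Hiptmair09} to produce $(\tilde v,\bdd\psi)\in H^1_{\Gamma}(\Omega)\times H^1_{\Gamma}(\Omega)^3$ with $\grad\tilde v-\bdd\psi=\bdd\theta$ and the stability bound, and then observes the inclusion $H^1_{\Gamma}(\Omega)^3\subset\bdd{G}_{\Gamma}(\Omega)$ (the paper in fact lands in the smaller space where the full trace vanishes on all of $\Gamma_{cs}$, a minor strengthening of what you propose); the $d=2$ case is treated by the same argument.

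Your from-scratch sketch (Move~1 / Move~2) is a plausible outline of how such a decomposition is proved, and your self-diagnosis is apt: Move~2 is where the genuine mathematical content lies, and what you sketch there amounts precisely to re-proving \cite[Theorem~5.2]{Hiptmair09}. Two points in the sketch would need substantive work rather than hand-waving. First, in Move~1 the "finite-dimensional curl-free correction" must be drawn from the cohomology space determined by $\Omega$ \emph{and} the boundary split $\Gamma_{cs}/\Gamma_f$, and one must check it can be chosen uniformly bounded in terms of $\|\bdd\theta\|_{\curl}$; this is where the geometric hypotheses on $\Gamma_{cs}$ (the exterior Lipschitz fillers in $d=3$, the holes structure in $d=2$) actually enter. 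Second, in Move~2 you need an $H^2(\Omega)$ extension with prescribed trace on $\Gamma_{cs}$, prescribed normal derivative on $\Gamma_c$, and matching tangential derivative, on a Lipschitz \emph{polyhedron}; the compatibility you check at the level of tangential traces is necessary but not sufficient — corner/edge compatibility and the regularity of the prescribed data on the non-smooth boundary are exactly the delicate points that Hiptmair–Zheng handle. So the alternative route is not merely more economical; it is the one that actually closes the argument without having to redo that analysis.
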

\begin{proof}
	Let $d = 3$. Thanks to \cite[Theorem 5.2]{Hiptmair09}, for every $\bdd{\theta} \in \hcurlgamma$, there exists $(\tilde{v}, \bdd{\psi}) \in H^1_{\Gamma}(\Omega) \times H^1_{\Gamma}(\Omega)^3 \subset H^1_{\Gamma}(\Omega) \times \bdd{G}_{\Gamma}(\Omega)$ such that \cref{eq:hrotgamma-h10-h10-decomposition-equiv-norm} holds. The same arguments can be appropriately adjusted for the case $d=2$. Thus, $\hcurlgamma \subseteq \image \bdd{\Xi}$. The result now follows on noting that we have already shown that $\image \bdd{\Xi} \subseteq \hcurlgamma$.
\end{proof}

Thanks to \cref{lem:hrotgamma-grad-h10-decomp}, we have $\image \bdd{\Xi} = \hcurlgamma$, and so $\bdd{\Phi}_{\Gamma}(\Omega) = (\image \bdd{\Xi})^* =  \hcurlgamma^*$, which we equip with the standard dual norm
\begin{align*}
	\| \bdd{\eta} \|_{*} := \sup_{ \bdd{0} \neq \bdd{\theta} \in \hcurlgamma } \frac{ \langle \bdd{\theta}, \bdd{\eta} \rangle  }{ \|\bdd{\theta}\|_{\curl} } \qquad \forall \bdd{\eta} \in \hcurlgamma^*.	
\end{align*}
The following inf-sup condition will be useful in establishing the well-posedness of \cref{eq:h2-mixed-problem}:
\begin{corollary}
	\label{cor:inf-sup-continuous}
	There holds
	\begin{align}
		\label{eq:inf-sup-continuous}
		\beta := \inf_{ \substack{\bdd{\eta} \in \hcurlgamma^* \\ \bdd{\eta} \neq \bdd{0} } } \sup_{ \substack{ (\tilde{v}, \bdd{\psi}) \in H^1_{\Gamma}(\Omega) \times \bdd{G}_{\Gamma}(\Omega) \\ (\tilde{v}, \bdd{\psi}) \neq (0, \bdd{0}) } } \frac{ \langle \bdd{\Xi}(\tilde{v}, \bdd{\psi}), \bdd{\eta} \rangle }{ (\|\tilde{v}\|_1 + \|\bdd{\psi}\|_1) \|\bdd{\eta}\|_{*} } > 0.
	\end{align}
\end{corollary}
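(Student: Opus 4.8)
The plan is to deduce \cref{eq:inf-sup-continuous} directly from the norm equivalence established in \cref{lem:hrotgamma-grad-h10-decomp}, since that lemma already identifies $\image\bdd{\Xi}$ with $\hcurlgamma$ and controls the $H^1$-norms of the preimages. Fix $\bdd{\eta}\in\hcurlgamma^*$ with $\bdd{\eta}\neq\bdd{0}$, so that $\|\bdd{\eta}\|_*>0$. By the definition of the dual norm, for every $\epsilon\in(0,1)$ there exists $\bdd{\theta}\in\hcurlgamma$, necessarily nonzero, such that
\begin{align*}
	\langle \bdd{\theta}, \bdd{\eta} \rangle \geq (1-\epsilon)\,\|\bdd{\theta}\|_{\curl}\,\|\bdd{\eta}\|_{*}.
\end{align*}

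Next I would apply \cref{lem:hrotgamma-grad-h10-decomp} to this $\bdd{\theta}$ to obtain a pair $(\tilde{v},\bdd{\psi})\in H^1_{\Gamma}(\Omega)\times\bdd{G}_{\Gamma}(\Omega)$ with $\bdd{\Xi}(\tilde{v},\bdd{\psi})=\bdd{\theta}$ and $\|\tilde{v}\|_1+\|\bdd{\psi}\|_1 \leq C_1^{-1}\|\bdd{\theta}\|_{\curl}$. Since $\bdd{\Xi}(\tilde{v},\bdd{\psi})=\bdd{\theta}\neq\bdd{0}$, the pair $(\tilde{v},\bdd{\psi})$ is admissible in the supremum defining $\beta$. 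Substituting this particular pair and combining the two displayed estimates gives
\begin{align*}
	\sup_{ \substack{ (\tilde{v}, \bdd{\psi}) \in H^1_{\Gamma}(\Omega) \times \bdd{G}_{\Gamma}(\Omega) \\ (\tilde{v}, \bdd{\psi}) \neq (0, \bdd{0}) } } \frac{ \langle \bdd{\Xi}(\tilde{v}, \bdd{\psi}), \bdd{\eta} \rangle }{ (\|\tilde{v}\|_1 + \|\bdd{\psi}\|_1) \|\bdd{\eta}\|_{*} } \geq \frac{ \langle \bdd{\theta}, \bdd{\eta} \rangle }{ C_1^{-1}\|\bdd{\theta}\|_{\curl}\,\|\bdd{\eta}\|_{*} } \geq C_1(1-\epsilon).
\end{align*}
Letting $\epsilon\downarrow 0$ and then taking the infimum over all nonzero $\bdd{\eta}\in\hcurlgamma^*$ yields $\beta\geq C_1>0$, which is the claim.

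Since every ingredient is already in hand, there is no substantive obstacle here; the corollary is essentially a reformulation of \cref{lem:hrotgamma-grad-h10-decomp}. The only point requiring a moment's care is verifying that the pair $(\tilde{v},\bdd{\psi})$ produced above is genuinely nonzero, so that it is an admissible competitor in the supremum — and this is guaranteed precisely because $\bdd{\eta}\neq\bdd{0}$ forces the near-maximizer $\bdd{\theta}$, hence its $\bdd{\Xi}$-preimage, to be nonzero.
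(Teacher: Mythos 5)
Your proof is correct and follows essentially the same route as the paper's: pick an (almost-)optimal $\bdd{\theta}\in\hcurlgamma$ for the dual norm of $\bdd{\eta}$, pull it back through \cref{lem:hrotgamma-grad-h10-decomp} to a pair $(\tilde v,\bdd\psi)$ with controlled $H^1$-norms, and substitute into the supremum. The only cosmetic difference is that the paper uses the exact Riesz representer of $\bdd{\eta}$ in $\hcurlgamma$ (so no $\epsilon$-limit is needed), whereas you use an $\epsilon$-near maximizer and let $\epsilon\downarrow 0$; both give $\beta\geq C_1>0$.
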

\begin{proof}
	Let $\bdd{\eta} \in \hcurlgamma^*$ be given and let $\bdd{\theta} \in \hcurlgamma$ be its Riesz representer. By \cref{lem:hrotgamma-grad-h10-decomp}, there exists $(\tilde{v}, \bdd{\psi}) \in H^1_{\Gamma}(\Omega) \times \bdd{G}_{\Gamma}(\Omega)$ such that $\bdd{\Xi}(\tilde{v}, \bdd{\psi}) = \bdd{\theta}$ and $\|\tilde{v}\|_1 + \|\bdd{\psi}\|_1 \leq C \|\bdd{\theta}\|_{\curl} = \| \bdd{\eta} \|_{*}$. Inequality \cref{eq:inf-sup-continuous} now follows since
	\begin{align*}
		\frac{ \langle \bdd{\Xi}(\tilde{v}, \bdd{\psi}), \bdd{\eta} \rangle }{ (\|\tilde{v}\|_1 + \|\bdd{\psi}\|_1) \|\bdd{\eta}\|_{*} } \geq C^{-1} \frac{ \langle \bdd{\theta}, \bdd{\eta} \rangle }{ \| \bdd{\eta}\|_{*}^2 } = C^{-1}.
	\end{align*}
\end{proof}

\subsection[Characterizing $\Kernel \Xi$]{Characterizing $\Kernel \bdd{\Xi}$} 

The kernel of $\bdd{\Xi}$ is defined by 
\begin{align*}
	\Kernel \bdd{\Xi} := \{ (\tilde{v}, \bdd{\psi}) \in  H^1_{\Gamma}(\Omega) \times \bdd{G}_{\Gamma}(\Omega) : \langle \bdd{\Xi}(\tilde{v}, \bdd{\psi}), \bdd{\eta} \rangle = 0 \ \forall \bdd{\eta} \in \hcurlgamma^* \}
\end{align*}
and has the following simple characterization:
\begin{lemma}
	\label{lem:ker-b}
	There holds
	\begin{align}
		\label{eq:ker-b}
		\Kernel \bdd{\Xi} = \{ (w, \grad w) : w \in H^2_{\Gamma}(\Omega) \}.
	\end{align}
\end{lemma}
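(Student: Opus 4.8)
The plan is to prove the two set inclusions in \cref{eq:ker-b} separately. The only conceptual point is that, although the constraint in the definition of $\Kernel\bdd\Xi$ is tested against the nonstandard multiplier space $\hcurlgamma^*$, this space is large enough to separate the elements of $\image\bdd\Xi$; indeed, $L^2(\Omega)^d$ embeds continuously into $\hcurlgamma^*$ via the $L^2$-pairing (since $\|\bdd\theta\|\le\|\bdd\theta\|_{\curl}$), and this lets us collapse the abstract constraint $\langle\bdd\Xi(\tilde v,\bdd\psi),\bdd\eta\rangle=0\ \forall\bdd\eta$ to the pointwise identity $\grad\tilde v=\bdd\psi$. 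Once that identity is in hand, both inclusions fall out of the characterization \cref{eq:h2gamma-h1gamma-id} of $H^2_\Gamma(\Omega)$.

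For the inclusion $\{(w,\grad w):w\in H^2_\Gamma(\Omega)\}\subseteq\Kernel\bdd\Xi$, I would argue as follows. Let $w\in H^2_\Gamma(\Omega)$. Since $H^2_\Gamma(\Omega)\subset H^1_\Gamma(\Omega)$, we have $w\in H^1_\Gamma(\Omega)$, and by \cref{eq:h2gamma-h1gamma-id} we have $\grad w\in\bdd{G}_\Gamma(\Omega)$; hence $(w,\grad w)\in H^1_\Gamma(\Omega)\times\bdd{G}_\Gamma(\Omega)$. Because $\bdd\Xi(w,\grad w)=\grad w-\grad w=\bdd 0$ in $L^2(\Omega)^d$, the pairing $\langle\bdd\Xi(w,\grad w),\bdd\eta\rangle$ vanishes for every $\bdd\eta\in\hcurlgamma^*$, so $(w,\grad w)\in\Kernel\bdd\Xi$.

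For the reverse inclusion, take $(\tilde v,\bdd\psi)\in\Kernel\bdd\Xi$ and set $\bdd\theta:=\bdd\Xi(\tilde v,\bdd\psi)=\grad\tilde v-\bdd\psi\in L^2(\Omega)^d$. Viewing $\bdd\theta$ itself as the functional $(\bdd\theta,\cdot)$ on $\hcurlgamma$, which lies in $\hcurlgamma^*$ by the continuous embedding $L^2(\Omega)^d\hookrightarrow\hcurlgamma^*$ mentioned above (and using $\image\bdd\Xi=\hcurlgamma$ from \cref{lem:hrotgamma-grad-h10-decomp}), the defining relation of $\Kernel\bdd\Xi$ applied with $\bdd\eta=\bdd\theta$ gives $\|\bdd\theta\|^2=\langle\bdd\Xi(\tilde v,\bdd\psi),\bdd\theta\rangle=0$, so $\grad\tilde v=\bdd\psi$. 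Consequently $\tilde v\in H^1_\Gamma(\Omega)$ with $\grad\tilde v=\bdd\psi\in\bdd{G}_\Gamma(\Omega)$, and \cref{eq:h2gamma-h1gamma-id} then yields $\tilde v\in H^2_\Gamma(\Omega)$; thus $(\tilde v,\bdd\psi)=(\tilde v,\grad\tilde v)$ belongs to the right-hand side of \cref{eq:ker-b}. Combining the two inclusions proves the claim.

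I do not expect a genuine obstacle here: the proof is essentially bookkeeping built on \cref{lem:hrotgamma-grad-h10-decomp} and the characterization \cref{eq:h2gamma-h1gamma-id}. The one step that deserves care is the justification that the constraint against \emph{all} of $\hcurlgamma^*$ really forces $\bdd\Xi(\tilde v,\bdd\psi)=\bdd 0$; the cleanest route is the one above (test against $\bdd\Xi(\tilde v,\bdd\psi)$ itself, seen as an $L^2$-functional), but one could equally invoke \cref{lem:hrotgamma-grad-h10-decomp} to note that $\image\bdd\Xi=\hcurlgamma$ is a Banach space under $\|\cdot\|_{\curl}$ and appeal to the Hahn–Banach theorem to separate points.
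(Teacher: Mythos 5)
Your proof is correct and follows essentially the same route as the paper's: show that testing against all of $\hcurlgamma^*$ forces $\bdd{\Xi}(\tilde v,\bdd\psi)=\bdd 0$, then invoke the characterization \cref{eq:h2gamma-h1gamma-id} of $H^2_\Gamma(\Omega)$. The paper simply states ``Hence, $\bdd{\Xi}(\tilde v,\bdd\psi)=\bdd 0$'' without spelling out the separation argument you give via the $L^2$-pairing, so your version just makes that small implicit step explicit.
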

\begin{proof}
	Let $(\tilde{v}, \bdd{\psi}) \in \Kernel \bdd{\Xi}$. Then, as shown above, $\bdd{\Xi}(\tilde{v}, \bdd{\psi}) \in \hcurlgamma$, and $\langle \bdd{\Xi}(\tilde{v}, \bdd{\psi}), \bdd{\eta} \rangle = 0$ for all $\bdd{\eta} \in \hcurlgamma^*$. Hence, $\bdd{\Xi}(\tilde{v}, \bdd{\psi}) = \bdd{0}$ or, equally well, $\grad \tilde{v} = \bdd{\psi}$, and so $\tilde{v} \in H^2_{\Gamma}(\Omega)$. Thus, $\Kernel \bdd{\Xi} \subseteq \{ (w, \grad w) : w \in H^2_{\Gamma}(\Omega) \}$. The reverse inclusion follows by definition, and so \cref{eq:ker-b} holds.
\end{proof}

\subsection{Well-posedness}

Thanks to the well-posedness of \cref{eq:h2-bilinear}, there exists $w \in H^2_{\Gamma}(\Omega)$ satisfying \cref{eq:h2-bilinear}. Consequently, $(\tilde{w}, \bdd{\gamma}) = (w, \grad w) \in H^1_{\Gamma}(\Omega) \times \bdd{G}_{\Gamma}(\Omega)$ satisfies \cref{eq:h2-mixed-problem-1} for all $(\tilde{v}, \bdd{\psi}) \in \Kernel \bdd{\Xi}$ and satisfies \cref{eq:h2-mixed-problem-2}. The following result establishes the well-posedness of the mixed formulation \cref{eq:h2-mixed-problem} in the case of general data:
\begin{theorem}
	\label{thm:kirchhoff-mixed-gen}
	Let $L \in (H^1_{\Gamma}(\Omega) \times \bdd{G}_{\Gamma}(\Omega))^*$ and $\bdd{g} \in \hcurlgamma$. Then, there exists a unique solution to the following mixed problem: Find $(\tilde{w}, \bdd{\gamma}) \in H^1_{\Gamma}(\Omega) \times \bdd{G}_{\Gamma}(\Omega)$ and $\bdd{\phi} \in \hcurlgamma^*$ such that
	\begin{subequations}
		\label{eq:h2-mixed-gen-old}
		\begin{alignat}{2}
			a(\bdd{\gamma}, \bdd{\psi}) + c(\tilde{w}, \tilde{v}) + \langle \bdd{\Xi}(v,\bdd{\psi}), \bdd{\phi} \rangle &= L(\tilde{v}, \bdd{\psi}) \qquad & &\forall (\tilde{v}, \bdd{\psi}) \in H^1_{\Gamma}(\Omega) \times \bdd{G}_{\Gamma}(\Omega), \\
			\langle \bdd{\Xi}(\tilde{w}, \bdd{\gamma}), \bdd{\eta} \rangle &= \langle \bdd{g}, \bdd{\eta} \rangle  \qquad & &\forall \bdd{\eta} \in \hcurlgamma^*.
		\end{alignat}
	\end{subequations}
	Additionally, the solution satisfies
	\begin{align}
		\label{eq:h2-mixed-gen-stability}
		\| \tilde{w} \|_1 + \|\bdd{\gamma}\|_1 + \| \bdd{\phi}\|_{*} \leq C \left( \|L\|_{(H^1_{\Gamma}(\Omega) \times \bdd{G}_{\Gamma}(\Omega))^*} + \| \bdd{g} \|_{\curl} \right) \quad \text{and} \quad  \bdd{\Xi}(\tilde{w},  \bdd{\gamma}) = \bdd{g},
	\end{align}
	where $C$ is independent of $L$ and $\bdd{g}$.
\end{theorem}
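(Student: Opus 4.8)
The plan is to recognize \cref{eq:h2-mixed-gen-old} as a saddle-point problem and apply the standard mixed (Brezzi) theory, in the form that requires only an inf--sup condition together with a non-degeneracy condition on the kernel rather than coercivity (see e.g. \cite{GuerErn04}), since the underlying form $B$ is assumed only inf--sup stable. To this end, equip $H^1_{\Gamma}(\Omega) \times \bdd{G}_{\Gamma}(\Omega)$ with the norm $(\tilde{v}, \bdd{\psi}) \mapsto \|\tilde{v}\|_1 + \|\bdd{\psi}\|_1$, set
\begin{align*}
	\mathcal{A}\bigl( (\tilde{w}, \bdd{\gamma}), (\tilde{v}, \bdd{\psi}) \bigr) := a(\bdd{\gamma}, \bdd{\psi}) + c(\tilde{w}, \tilde{v}), \qquad b\bigl( (\tilde{v}, \bdd{\psi}), \bdd{\eta} \bigr) := \langle \bdd{\Xi}(\tilde{v}, \bdd{\psi}), \bdd{\eta} \rangle,
\end{align*}
so that \cref{eq:h2-mixed-gen-old} is the saddle-point problem with primal form $\mathcal{A}$, constraint form $b$ pairing $H^1_{\Gamma}(\Omega) \times \bdd{G}_{\Gamma}(\Omega)$ with $\hcurlgamma^*$, and right-hand sides $L$ and $\langle \bdd{g}, \cdot \rangle$. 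Boundedness of $\mathcal{A}$ is immediate from \cref{eq:ac-bilinear-bounded} since $\|\bdd{\gamma}\|_1 \leq \|\tilde{w}\|_1 + \|\bdd{\gamma}\|_1$, and boundedness of $b$ follows from the definition of $\|\cdot\|_*$ together with the elementary bound $\|\bdd{\Xi}(\tilde{v}, \bdd{\psi})\|_{\curl} \lesssim \|\tilde{v}\|_1 + \|\bdd{\psi}\|_1$, which holds because $\bdd{\Xi}(\tilde{v}, \bdd{\psi}) = \grad \tilde{v} - \bdd{\psi}$ and $\curl \bdd{\Xi}(\tilde{v}, \bdd{\psi}) = -\curl \bdd{\psi}$.

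Next I would verify the two structural hypotheses. The inf--sup condition for $b$ with respect to the multiplier norm $\|\cdot\|_*$ is exactly \cref{cor:inf-sup-continuous}. For the kernel condition, \cref{lem:ker-b} identifies $\Kernel \bdd{\Xi} = \{ (w, \grad w) : w \in H^2_{\Gamma}(\Omega) \}$, and on this subspace $\mathcal{A}\bigl( (w, \grad w), (v, \grad v) \bigr) = a(\grad w, \grad v) + c(w, v) = B(w, v)$ by the splitting \cref{eq:b-bilinear-splitting}. Moreover, the product norm of $(w, \grad w)$ is equivalent to $\|w\|_2$; indeed a direct computation gives $\|w\|_2 \leq \|w\|_1 + \|\grad w\|_1 \leq 3\|w\|_2$. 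Consequently the inf--sup condition \cref{eq:b-bilinear-infsup} and the uniqueness (non-degeneracy) condition \cref{eq:b-bilinear-uniqueness} for $B$ on $H^2_{\Gamma}(\Omega)$ transfer, up to the fixed constant in the norm equivalence, to the corresponding inf--sup and non-degeneracy conditions for $\mathcal{A}$ on $\Kernel \bdd{\Xi}$.

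With these ingredients, the generalized Brezzi theorem yields existence of a unique solution $(\tilde{w}, \bdd{\gamma}, \bdd{\phi})$ together with the stability estimate \cref{eq:h2-mixed-gen-stability}, the constant depending only on $M$, $\alpha$, the kernel non-degeneracy, the inf--sup constant $\beta$ of \cref{cor:inf-sup-continuous}, and the norm-equivalence constants. It remains to check $\bdd{\Xi}(\tilde{w}, \bdd{\gamma}) = \bdd{g}$: by hypothesis $\bdd{g} \in \hcurlgamma = \image \bdd{\Xi}$, and $\bdd{\Xi}(\tilde{w}, \bdd{\gamma}) \in \hcurlgamma$ as shown prior to \cref{lem:hrotgamma-grad-h10-decomp}, so the second equation of \cref{eq:h2-mixed-gen-old} reads $\langle \bdd{\Xi}(\tilde{w}, \bdd{\gamma}) - \bdd{g}, \bdd{\eta} \rangle = 0$ for all $\bdd{\eta} \in \hcurlgamma^*$; since $\hcurlgamma$ is a Hilbert space (a closed subspace of $\hcurl$), this forces $\bdd{\Xi}(\tilde{w}, \bdd{\gamma}) = \bdd{g}$. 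The only point requiring genuine care in this argument is the invocation of the correct version of the saddle-point theory --- because $B$ need not be coercive, the coercive-kernel form of Brezzi's theorem does not apply, and one must instead use the variant built on an inf--sup plus non-degeneracy on the kernel, both of which are inherited from \cref{eq:b-bilinear-infsup,eq:b-bilinear-uniqueness} via the norm equivalence; the remaining verifications are routine.
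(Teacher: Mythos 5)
Your proposal is correct and follows essentially the same route as the paper's proof: a direct application of Babu\v{s}ka--Brezzi theory, with the kernel identified by \cref{lem:ker-b}, the constraint inf--sup supplied by \cref{cor:inf-sup-continuous}, and the kernel inf--sup transferred from \cref{eq:b-bilinear-infsup} via the obvious norm equivalence between $\|(w, \grad w)\|_{H^1 \times \bdd{H}^1}$ and $\|w\|_2$. If anything you are slightly more careful than the paper's one-sentence proof, which invokes only \cref{eq:b-bilinear-infsup} on the kernel while you correctly also bring in the transpose non-degeneracy \cref{eq:b-bilinear-uniqueness} needed by the generalized (non-coercive) Brezzi theorem, and you supply the short argument that $\bdd{\Xi}(\tilde{w}, \bdd{\gamma}) = \bdd{g}$ rather than merely asserting it.
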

\begin{proof}
	The proof is a standard application of Babu\v{s}ka-Brezzi theory. \Cref{lem:ker-b} and \cref{eq:b-bilinear-infsup} show that the bilinear form $a(\cdot,\cdot) + c(\cdot,\cdot)$ satisfies an inf-sup condition on $\Kernel \bdd{\Xi}$. The inf-sup condition \cref{eq:inf-sup-continuous} then gives the existence and uniqueness of solutions to \cref{eq:h2-mixed-gen-old} and \cref{eq:h2-mixed-gen-stability}; see e.g. \cite[Theorem 4.2.3]{BoffiBrezziFortin13}.
\end{proof}

The next result shows, in the special case where the data in \cref{eq:h2-mixed-gen-old} is of the form in \cref{eq:h2-mixed-problem}, that $\tilde{w} \in H^1_{\Gamma}(\Omega)$ in fact belongs to $H^2_{\Gamma}(\Omega)$ and coincides with the solution $w \in H^2_{\Gamma}(\Omega)$ to the primal problem \cref{eq:h2-bilinear}:
\begin{corollary}
	\label{cor:h2-mixed-rewrite}
	Let $F_1 \in \bdd{G}_{\Gamma}(\Omega)^*$ and $F_2 \in H^1_{\Gamma}(\Omega)^*$. Then, there exists a unique solution to the following mixed problem: Find $(\tilde{w}, \bdd{\gamma}) \in H^1_{\Gamma}(\Omega) \times \bdd{G}_{\Gamma}(\Omega)$ and $\bdd{\phi} \in \hcurlgamma^*$ such that
	\begin{subequations}
		\label{eq:h2-mixed-rewrite}
		\begin{alignat}{2}
			\label{eq:h2-mixed-rewrite-1}
			a(\bdd{\gamma}, \bdd{\psi}) + c(\tilde{w}, \tilde{v}) + \langle \bdd{\Xi}(v,\bdd{\psi}), \bdd{\phi} \rangle &= F_1(\bdd{\psi}) + F_2(\tilde{v}) \qquad & &\forall (\tilde{v}, \bdd{\psi}) \in H^1_{\Gamma}(\Omega) \times \bdd{G}_{\Gamma}(\Omega), \\
			\label{eq:h2-mixed-rewrite-2}
			\langle \bdd{\Xi}(\tilde{w}, \bdd{\gamma}), \bdd{\eta} \rangle &=  0 \qquad & &\forall \bdd{\eta} \in \hcurlgamma^*.
		\end{alignat}
	\end{subequations}
	Moreover, the solution satisfies $\tilde{w} = w$ and $\bdd{\gamma} = \grad w$, where $w \in H^2_{\Gamma}(\Omega)$ is the solution to problem \cref{eq:h2-bilinear}.
\end{corollary}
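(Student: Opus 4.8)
The plan is to obtain the corollary as a direct consequence of \cref{thm:kirchhoff-mixed-gen} and the kernel characterization \cref{lem:ker-b}, with the structural splittings \cref{eq:b-bilinear-splitting,eq:F-functional-splitting} used to connect the mixed problem back to the primal one. For existence and uniqueness, I would apply \cref{thm:kirchhoff-mixed-gen} with $L(\tilde{v}, \bdd{\psi}) := F_1(\bdd{\psi}) + F_2(\tilde{v})$ and $\bdd{g} := \bdd{0}$: since $F_1 \in \bdd{G}_{\Gamma}(\Omega)^*$ and $F_2 \in H^1_{\Gamma}(\Omega)^*$, the functional $L$ is bounded on $H^1_{\Gamma}(\Omega) \times \bdd{G}_{\Gamma}(\Omega)$, and $\bdd{0} \in \hcurlgamma$ trivially, so \cref{thm:kirchhoff-mixed-gen} furnishes a unique triple $(\tilde{w}, \bdd{\gamma}, \bdd{\phi})$ solving \cref{eq:h2-mixed-rewrite}, which is exactly the asserted well-posedness.

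To identify the solution, note that \cref{eq:h2-mixed-rewrite-2} states that $(\tilde{w}, \bdd{\gamma}) \in \Kernel \bdd{\Xi}$ (equivalently, the stability estimate \cref{eq:h2-mixed-gen-stability} already yields $\bdd{\Xi}(\tilde{w}, \bdd{\gamma}) = \bdd{g} = \bdd{0}$). By \cref{lem:ker-b}, there is some $u \in H^2_{\Gamma}(\Omega)$ with $(\tilde{w}, \bdd{\gamma}) = (u, \grad u)$; in particular $\tilde{w} \in H^2_{\Gamma}(\Omega)$ and $\bdd{\gamma} = \grad \tilde{w}$. I would then test \cref{eq:h2-mixed-rewrite-1} against pairs of the special form $(\tilde{v}, \grad \tilde{v})$ with $\tilde{v} \in H^2_{\Gamma}(\Omega)$, which are admissible thanks to the identity \cref{eq:h2gamma-h1gamma-id} and which lie in $\Kernel \bdd{\Xi}$, so that the multiplier term $\langle \bdd{\Xi}(\tilde{v}, \grad \tilde{v}), \bdd{\phi} \rangle$ vanishes. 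Using $\bdd{\gamma} = \grad \tilde{w}$ and the splittings \cref{eq:b-bilinear-splitting,eq:F-functional-splitting}, what remains is
\begin{align*}
	B(\tilde{w}, \tilde{v}) = a(\grad \tilde{w}, \grad \tilde{v}) + c(\tilde{w}, \tilde{v}) = F_1(\grad \tilde{v}) + F_2(\tilde{v}) = F(\tilde{v}) \qquad \forall \tilde{v} \in H^2_{\Gamma}(\Omega).
\end{align*}
Hence $\tilde{w} \in H^2_{\Gamma}(\Omega)$ solves the primal problem \cref{eq:h2-bilinear}, and by the well-posedness of \cref{eq:h2-bilinear} (guaranteed by \cref{eq:b-bilinear-bounded,eq:b-bilinear-infsup,eq:b-bilinear-uniqueness}) we conclude $\tilde{w} = w$ and therefore $\bdd{\gamma} = \grad \tilde{w} = \grad w$.

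The argument is essentially bookkeeping, so I do not anticipate a genuine obstacle; the only points requiring a little care are verifying that the special test functions $(\tilde{v}, \grad \tilde{v})$ indeed belong to $H^1_{\Gamma}(\Omega) \times \bdd{G}_{\Gamma}(\Omega)$ (which is precisely the content of \cref{eq:h2gamma-h1gamma-id}) and observing that the multiplier contribution drops out on $\Kernel \bdd{\Xi}$. No statement about $\bdd{\phi}$ beyond its existence and uniqueness is needed.
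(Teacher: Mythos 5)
Your proposal is correct and follows exactly the paper's own argument: invoke \cref{thm:kirchhoff-mixed-gen} with $L(\tilde{v}, \bdd{\psi}) = F_1(\bdd{\psi}) + F_2(\tilde{v})$ and $\bdd{g} = \bdd{0}$ for existence, uniqueness, and the constraint $\bdd{\Xi}(\tilde{w}, \bdd{\gamma}) = \bdd{0}$, then use \cref{lem:ker-b} to place $\tilde{w}$ in $H^2_{\Gamma}(\Omega)$ with $\bdd{\gamma} = \grad \tilde{w}$, and finally test \cref{eq:h2-mixed-rewrite-1} against $(v, \grad v) \in \Kernel \bdd{\Xi}$ to recover the primal problem \cref{eq:h2-bilinear} and conclude by its uniqueness.
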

\begin{proof}
	The existence, uniqueness, and stability of $(\tilde{w}, \bdd{\gamma}, \bdd{\phi}) \in H^1_{\Gamma}(\Omega) \times \bdd{G}_{\Gamma}(\Omega) \times \hcurlgamma^*$ and the relation $\grad \tilde{w} = \bdd{\gamma}$ follows from \cref{thm:kirchhoff-mixed-gen}. Thanks to \cref{lem:ker-b}, $\tilde{w} \in H^2_{\Gamma}(\Omega)$ and testing \cref{eq:h2-mixed-rewrite-1} with $(\tilde{v}, \bdd{\psi}) = (v, \grad v)$ for $v \in H^2_{\Gamma}(\Omega)$ shows that $\tilde{w}$ satisfies \cref{eq:h2-bilinear}. Since solutions to \cref{eq:h2-bilinear} are unique, $\tilde{w} = w$ and $\bdd{\gamma} = \grad \tilde{w} = \grad w$.
\end{proof}

The variational formulation \cref{eq:h2-mixed-rewrite} can be expressed in a form that avoids the presence of the dual spaces altogether. Specifically, replacing $\bdd{\phi} \in \hcurlgamma^*$ by its Riesz representer $\bdd{q} \in \hcurlgamma$ and identifying the duality pairing with the inner product on $\hcurlgamma$ leads to the following restatement of \cref{cor:h2-mixed-rewrite}:
\begin{corollary}
	\label{cor:h2-mixed-nodual}
	Let $F_1 \in \bdd{G}_{\Gamma}(\Omega)^*$ and $F_2 \in H^1_{\Gamma}(\Omega)^*$. Then, there exists a unique solution to the following mixed problem: Find $(\tilde{w}, \bdd{\gamma}) \in H^1_{\Gamma}(\Omega) \times \bdd{G}_{\Gamma}(\Omega)$ and $\bdd{q} \in \hcurlgamma$ such that
	\begin{subequations}
		\label{eq:h2-mixed-nodual}
		\begin{alignat}{2}
			\label{eq:h2-mixed-nodual-1}
			a(\bdd{\gamma}, \bdd{\psi}) + c(\tilde{w}, \tilde{v}) + 
			(\bdd{\Xi}(\tilde{v},\bdd{\psi}), \bdd{q})_{\curl} &= F_1(\bdd{\psi}) + F_2(\tilde{v}) \qquad & &\forall (\tilde{v}, \bdd{\psi}) \in H^1_{\Gamma}(\Omega) \times \bdd{G}_{\Gamma}(\Omega), \\
			\label{eq:h2-mixed-nodual-2}
			(\bdd{\Xi}(\tilde{w}, \bdd{\gamma}), \bdd{\eta})_{\curl} &=  0 \qquad & &\forall \bdd{\eta} \in \hcurlgamma,
		\end{alignat}
	\end{subequations}
	where $(\cdot, \cdot)_{\curl}$ is the inner-product on $\hcurl$:
	\begin{align*}
		(\bdd{\theta}, \bdd{\eta})_{\curl} := (\bdd{\theta}, \bdd{\eta}) + (\curl \bdd{\theta}, \curl \bdd{\eta}) \qquad \forall \bdd{\theta}, \bdd{\eta} \in \hcurl.
	\end{align*}
	Moreover, the solution satisfies $\tilde{w} = w$ and $\bdd{\gamma} = \grad w$, where $w \in H^2_{\Gamma}(\Omega)$ is the solution to problem \cref{eq:h2-bilinear}.
\end{corollary}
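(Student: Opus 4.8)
The plan is to recognize \cref{cor:h2-mixed-nodual} as nothing more than \cref{cor:h2-mixed-rewrite} rephrased through the Riesz representation theorem, so the proof should consist of exhibiting an explicit bijection between the solution sets of \cref{eq:h2-mixed-rewrite} and \cref{eq:h2-mixed-nodual}. The starting observation is that $\hcurlgamma$ is a closed subspace of the Hilbert space $\hcurl$ and hence is itself a Hilbert space under $(\cdot,\cdot)_{\curl}$; consequently the Riesz map $\mathcal{R} : \hcurlgamma \to \hcurlgamma^*$, characterized by $\langle \bdd{\theta}, \mathcal{R}\bdd{q} \rangle = (\bdd{\theta}, \bdd{q})_{\curl}$ for all $\bdd{\theta} \in \hcurlgamma$, is a linear isometric isomorphism.

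First I would take the solution $(\tilde{w}, \bdd{\gamma}, \bdd{\phi}) \in H^1_{\Gamma}(\Omega) \times \bdd{G}_{\Gamma}(\Omega) \times \hcurlgamma^*$ furnished by \cref{cor:h2-mixed-rewrite} and set $\bdd{q} := \mathcal{R}^{-1} \bdd{\phi} \in \hcurlgamma$. By \cref{lem:hrotgamma-grad-h10-decomp} we have $\image \bdd{\Xi} = \hcurlgamma$, so for every $(\tilde{v}, \bdd{\psi}) \in H^1_{\Gamma}(\Omega) \times \bdd{G}_{\Gamma}(\Omega)$ the element $\bdd{\Xi}(\tilde{v}, \bdd{\psi})$ lies in $\hcurlgamma$ and the defining property of $\mathcal{R}$ gives $\langle \bdd{\Xi}(\tilde{v},\bdd{\psi}), \bdd{\phi} \rangle = \langle \bdd{\Xi}(\tilde{v},\bdd{\psi}), \mathcal{R}\bdd{q} \rangle = (\bdd{\Xi}(\tilde{v},\bdd{\psi}), \bdd{q})_{\curl}$. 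Substituting this identity into \cref{eq:h2-mixed-rewrite-1} turns it into \cref{eq:h2-mixed-nodual-1}. For the constraint equation, note that $\bdd{\Xi}(\tilde{w},\bdd{\gamma}) \in \hcurlgamma$ as well; since $\mathcal{R}$ is onto, ranging over $\bdd{\eta} \in \hcurlgamma^*$ in \cref{eq:h2-mixed-rewrite-2} is the same as ranging over $\mathcal{R}\bdd{\eta}'$ with $\bdd{\eta}' \in \hcurlgamma$, and $\langle \bdd{\Xi}(\tilde{w},\bdd{\gamma}), \mathcal{R}\bdd{\eta}' \rangle = (\bdd{\Xi}(\tilde{w},\bdd{\gamma}), \bdd{\eta}')_{\curl}$, so \cref{eq:h2-mixed-rewrite-2} is equivalent to \cref{eq:h2-mixed-nodual-2}. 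Hence $(\tilde{w},\bdd{\gamma},\bdd{q})$ solves \cref{eq:h2-mixed-nodual}. Conversely, given any solution $(\tilde{w},\bdd{\gamma},\bdd{q})$ of \cref{eq:h2-mixed-nodual}, the same identities read backwards show that $(\tilde{w},\bdd{\gamma},\mathcal{R}\bdd{q})$ solves \cref{eq:h2-mixed-rewrite}; since $\mathcal{R}$ is a bijection, this correspondence is one-to-one between the two solution sets. Uniqueness for \cref{eq:h2-mixed-nodual} therefore follows from the uniqueness already established in \cref{cor:h2-mixed-rewrite}, and the characterization $\tilde{w} = w$, $\bdd{\gamma} = \grad w$ is inherited verbatim; if one wishes, the stability estimate also transfers since $\mathcal{R}$ is isometric, so $\|\bdd{q}\|_{\curl} = \|\bdd{\phi}\|_{*}$.

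The only point requiring any care — and it is a mild one — is the justification that the abstract duality pairing $\langle \cdot, \cdot \rangle$ appearing in \cref{eq:h2-mixed-rewrite}, which is by definition the action of a functional in $\hcurlgamma^*$ on $\image \bdd{\Xi}$, may legitimately be replaced by $(\cdot,\cdot)_{\curl}$ paired against the Riesz representer. This replacement is valid precisely because \cref{lem:hrotgamma-grad-h10-decomp} guarantees that every element entering the left slot of $\langle \cdot, \cdot \rangle$ in \cref{eq:h2-mixed-rewrite} — namely $\bdd{\Xi}(\tilde{v},\bdd{\psi})$ and $\bdd{\Xi}(\tilde{w},\bdd{\gamma})$ — actually belongs to $\hcurlgamma = \image\bdd{\Xi}$, which is exactly where the Riesz identity $\langle \bdd\theta, \mathcal R \bdd q\rangle = (\bdd\theta,\bdd q)_{\curl}$ holds; no question of extending this identity beyond $\image\bdd{\Xi}$ ever arises.
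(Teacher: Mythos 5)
Your proof is correct and matches the paper's own (one-sentence) justification exactly: the paper introduces \cref{cor:h2-mixed-nodual} precisely by ``replacing $\bdd{\phi} \in \hcurlgamma^*$ by its Riesz representer $\bdd{q} \in \hcurlgamma$ and identifying the duality pairing with the inner product on $\hcurlgamma$.'' You have simply spelled out the details — that the Riesz map on the closed subspace $\hcurlgamma$ is an isometric isomorphism, and that $\image\bdd{\Xi} = \hcurlgamma$ (from \cref{lem:hrotgamma-grad-h10-decomp}) is what licenses the substitution in both equations — which is exactly the intended argument.
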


The system \cref{eq:h2-mixed-nodual} is not the only possible mixed formulation of problem \cref{eq:h2-bilinear}. Indeed, simpler mixed formulations are possible \cite{Bramble83,Ciarlet74} depending on the particular form of $B(\cdot,\cdot)$ and the boundary conditions. For example, in the case of the biharmonic equation \cref{eq:biharmonic-a} with boundary conditions $w = \Delta w = 0$ on $\Gamma$, one can use a mixed formulation based on introducing an auxiliary variable $\sigma = -\Delta w$: Find $(w, \sigma) \in H^1_{\Gamma}(\Omega)^2$ such that
\begin{subequations}
	\label{eq:ss-biharmonic-simple-mixed}
	\begin{alignat}{2}
		(\sigma, v) + (\grad w, \grad v) &= 0 \qquad & &\forall v \in H^1_{\Gamma}(\Omega), \\
		(\grad \sigma, \grad u) &= G(u) \qquad & &\forall u \in H^1_{\Gamma}(\Omega),
	\end{alignat}
\end{subequations}
which can be discretized using only $H^1(\Omega)$-conforming finite element spaces. However, the apparently more complicated mixed formulation \cref{eq:h2-mixed-nodual} has an unexpected advantage over \cref{eq:ss-biharmonic-simple-mixed}, and other, mixed schemes in that under quite mild conditions, the \textit{resulting discretization produces the $H^2_{\Gamma}(\Omega)$-conforming solution to the original problem} \cref{eq:h2-bilinear}. This is not the case with other mixed schemes such as \cref{eq:ss-biharmonic-simple-mixed}. The question arises of whether a finite element discretization of the new mixed form might inherit this property at the discrete level and thus provide a way to compute the $H^2_{\Gamma}(\Omega)$-conforming approximation $w_{\mathcal{T}}$ defined by \cref{eq:h2-bilinear-fem}.

\section{Discretization of novel mixed formulation}
\label{sec:derive-mixed-fem-gen}

In order to construct conforming discretizations of the mixed problem \cref{eq:h2-mixed-nodual}, we select finite dimensional spaces $\tilde{\mathbb{W}}_{\Gamma} \subset H_{\Gamma}^1(\Omega)$, $\mathbbb{G}_{\Gamma} \subset \bdd{G}_{\Gamma}(\Omega)$, and $\mathbbb{Q}_{\Gamma} \subset \hcurlgamma$ and consider the following scheme: Find $(\tilde{w}_X, \bdd{\gamma}_X, \bdd{q}_X) \in \tilde{\mathbb{W}}_{\Gamma} \times \mathbbb{G}_{\Gamma} \times \mathbbb{Q}_{\Gamma}$ such that
\begin{subequations}
	\label{eq:h2-mixed-problem-fem}
	\begin{alignat}{2}
		\label{eq:h2-mixed-problem-fem-1}
		a(\bdd{\gamma}_X, \bdd{\psi}) + c(\tilde{w}_X, \tilde{v}) + ( \bdd{\Xi}(\tilde{v}, \bdd{\psi}), \bdd{q}_X )_{\curl} &= F_1(\bdd{\psi}) + F_2(\tilde{v}) \qquad & &\forall (\tilde{v}, \bdd{\psi}) \in  \tilde{\mathbb{W}}_{\Gamma} \times \mathbbb{G}_{\Gamma}, \\
		\label{eq:h2-mixed-problem-fem-2}
		(\bdd{\Xi}(\tilde{w}_X, \bdd{\gamma}_X), \bdd{\eta} )_{\curl} &= 0 \qquad & &\forall \bdd{\eta} \in \mathbbb{Q}_{\Gamma}. 
	\end{alignat}
\end{subequations}
Here, and in what follows, the subscript ``$X$" indicates the dependence of the discrete variables on the finite dimensional spaces $\tilde{\mathbb{W}}_{\Gamma}$, $\mathbbb{G}_{\Gamma}$, and $\mathbbb{Q}_{\Gamma}$.

Of course, in order for this scheme to be well-posed, some additional conditions must be imposed on $\tilde{\mathbb{W}}_{\Gamma}$, $\mathbbb{G}_{\Gamma}$, and $\mathbbb{Q}_{\Gamma}$.  Fortunately, the derivation of the mixed problem \cref{eq:h2-mixed-nodual} in the previous section provides some guidance as to how to choose these spaces. We shall assume the subspaces $\tilde{\mathbb{W}}_{\Gamma}$ and $\mathbbb{G}_{\Gamma}$ satisfy the following condition: 

\vspace{0.5em}

\begin{description}
	\item[(A1)\label{hp:tildewgamma-ggamma-def}] $\tilde{\mathbb{W}} \subset H^1(\Omega)$ and $\mathbbb{G} \subset H^1(\Omega)^d$ are conforming finite-dimensional subspaces and 
	\begin{align*}
		\tilde{\mathbb{W}}_{\Gamma} := \tilde{\mathbb{W}} \cap H^1_{\Gamma}(\Omega) \quad \text{and} \quad \mathbbb{G}_{\Gamma} := \mathbbb{G} \cap \bdd{G}_{\Gamma}(\Omega).	
	\end{align*} 
\end{description}

\noindent Examples of suitable choices for $\tilde{\mathbb{W}}_{\Gamma}$ and $\mathbbb{G}_{\Gamma}$ will be given in \cref{sec:elements-verify-assumption}.

The choice of $\mathbbb{Q}_{\Gamma} \subset \hcurlgamma$ is less obvious. The space $\hcurlgamma$ arose from considering the mapping properties of the operator $\bdd{\Xi}$ in \cref{eq:h2-mixed-problem} which led to the conclusion $\image \bdd{\Xi} = \hcurlgamma$. The situation in the discrete setting \cref{eq:h2-mixed-problem-fem} mirrors that of the continuous problem \cref{eq:h2-mixed-problem} with the main difference being that the operator $\bdd{\Xi}$ is replaced by its restriction to the subspace $\tilde{\mathbb{W}}_{\Gamma} \times \mathbbb{G}_{\Gamma} \subset H^1_{\Gamma}(\Omega) \times \bdd{G}_{\Gamma}(\Omega)$ defined by the rule $\bdd{\Xi}_X : \tilde{\mathbb{W}}_{\Gamma} \times \mathbbb{G}_{\Gamma}  \to \image \bdd{\Xi}_X$,  
\begin{align}
	\label{eq:xip-def}
	\bdd{\Xi}_X(\tilde{v}, \bdd{\psi}) := \grad \tilde{v} - \bdd{\psi} \qquad \forall (\tilde{v}, \bdd{\psi}) \in \tilde{\mathbb{W}}_{\Gamma} \times \mathbbb{G}_{\Gamma}.
\end{align}
The analysis in the continuous setting suggests choosing $\mathbbb{Q}_{\Gamma}$ as follows: 

\vspace{0.5em}

\begin{description}
	\item[(A2)\label{hp:qchoice}] $\mathbbb{Q}_{\Gamma} := \image \bdd{\Xi}_X$.
\end{description}

\vspace{0.5em}

\noindent While attractive from the theoretical viewpoint, in order to implement this choice in practice, it will be useful to study the operator $\bdd{\Xi}_{X}$ more closely.

\subsection[Characterizing $\Kernel \Xi$]{Characterization of $\Kernel \bdd{\Xi}_X$}

The kernel of $\bdd{\Xi}_X$ given by
\begin{align*}
	\Kernel \bdd{\Xi}_X := \{ (\tilde{v}, \bdd{\psi}) \in  \tilde{\mathbb{W}}_{\Gamma} \times \mathbbb{G}_{\Gamma} : ( \bdd{\Xi}_X(\tilde{v}, \bdd{\psi}) , \bdd{\eta} )_{\curl} = 0 \ \forall \bdd{\eta} \in \mathbbb{Q}_{\Gamma} \},
\end{align*}
has the same structure as the kernel of $\bdd{\Xi}$ \cref{eq:ker-b}. In particular, we have the following analogue of \cref{lem:ker-b}:
\begin{lemma}
	\label{lem:ker-xix}
	Let $\tilde{\mathbb{W}}_{\Gamma}$, $\mathbbb{G}_{\Gamma}$, and $\mathbbb{Q}_{\Gamma}$  satisfy \ref{hp:tildewgamma-ggamma-def} and \ref{hp:qchoice} and define
	\begin{align}
		\label{eq:wx-tildewx-gx}
		\mathbb{W}_{\Gamma} := \{ v \in \tilde{\mathbb{W}}_{\Gamma} : \grad v \in \mathbbb{G}_{\Gamma} \}.
	\end{align}	
	Then, $\mathbb{W}_{\Gamma}$ is a finite dimensional subspace of $H_{\Gamma}^2(\Omega)$, and there holds
	\begin{align}
		\label{eq:ker-xix}
		\Kernel \bdd{\Xi}_X = \{ (v, \grad v) : v \in \mathbb{W}_{\Gamma}  \}.
	\end{align}
\end{lemma}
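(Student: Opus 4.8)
The plan is to replay the proof of \cref{lem:ker-b} in the finite-dimensional setting, the only new ingredient being the choice \ref{hp:qchoice} which makes $\mathbbb{Q}_\Gamma$ coincide with $\image \bdd{\Xi}_X$. I would first record the structural claims about $\mathbb{W}_\Gamma$ defined in \cref{eq:wx-tildewx-gx}. Since $\mathbb{W}_\Gamma \subseteq \tilde{\mathbb{W}}_\Gamma$ and $\tilde{\mathbb{W}}_\Gamma$ is finite-dimensional by \ref{hp:tildewgamma-ggamma-def}, so is $\mathbb{W}_\Gamma$. If $v \in \mathbb{W}_\Gamma$, then $v \in H^1_\Gamma(\Omega)$ and $\grad v \in \mathbbb{G}_\Gamma \subseteq \bdd{G}_\Gamma(\Omega)$, so the characterization \cref{eq:h2gamma-h1gamma-id} of $H^2_\Gamma(\Omega)$ immediately gives $v \in H^2_\Gamma(\Omega)$; hence $\mathbb{W}_\Gamma$ is a finite-dimensional subspace of $H^2_\Gamma(\Omega)$.

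For the kernel identity \cref{eq:ker-xix}, the inclusion ``$\supseteq$'' is trivial: for any $v \in \mathbb{W}_\Gamma$ one has $\bdd{\Xi}_X(v, \grad v) = \grad v - \grad v = \bdd{0}$, so $(v, \grad v)$ pairs to zero with every $\bdd{\eta} \in \mathbbb{Q}_\Gamma$ and therefore lies in $\Kernel \bdd{\Xi}_X$. For ``$\subseteq$'', take $(\tilde{v}, \bdd{\psi}) \in \Kernel \bdd{\Xi}_X$. The key step --- and the only point at which \ref{hp:qchoice} is invoked --- is that $\bdd{\Xi}_X(\tilde{v}, \bdd{\psi})$ is itself an admissible test function, being by definition an element of $\image \bdd{\Xi}_X = \mathbbb{Q}_\Gamma$ (and contained in $\image \bdd{\Xi} = \hcurlgamma$ by \cref{lem:hrotgamma-grad-h10-decomp}, so the discretization is conforming). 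Taking $\bdd{\eta} = \bdd{\Xi}_X(\tilde{v}, \bdd{\psi})$ in the defining relation of $\Kernel \bdd{\Xi}_X$ yields $\|\bdd{\Xi}_X(\tilde{v}, \bdd{\psi})\|_{\curl}^2 = 0$, hence $\grad \tilde{v} = \bdd{\psi}$. Since $\tilde{v} \in \tilde{\mathbb{W}}_\Gamma$ with $\grad \tilde{v} = \bdd{\psi} \in \mathbbb{G}_\Gamma$, we conclude $\tilde{v} \in \mathbb{W}_\Gamma$ and $(\tilde{v}, \bdd{\psi}) = (\tilde{v}, \grad \tilde{v})$, which finishes the proof.

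I do not expect any genuine obstacle: the argument is purely formal, and the single subtlety worth flagging is that the self-testing trick $\bdd{\eta} = \bdd{\Xi}_X(\tilde{v},\bdd{\psi})$ is legitimate precisely because \ref{hp:qchoice} takes $\mathbbb{Q}_\Gamma$ to be the \emph{full} image of $\bdd{\Xi}_X$ --- this is the discrete counterpart of the role played by $\image \bdd{\Xi} = \hcurlgamma$ in \cref{lem:ker-b}. It is perhaps also worth remarking that no compatibility between $\tilde{\mathbb{W}}_\Gamma$ and $\mathbbb{G}_\Gamma$ (such as $\grad \tilde{\mathbb{W}}_\Gamma \subseteq \mathbbb{G}_\Gamma$) is assumed, so $\mathbb{W}_\Gamma$ could a priori be trivial; the lemma and its proof are unaffected.
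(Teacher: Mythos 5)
Your proof is correct and matches the paper's argument: finite-dimensionality of $\mathbb{W}_{\Gamma}$ and its inclusion in $H^2_{\Gamma}(\Omega)$ follow from \ref{hp:tildewgamma-ggamma-def} together with \cref{eq:h2gamma-h1gamma-id}, and the forward kernel inclusion follows by testing against $\bdd{\eta}=\bdd{\Xi}_X(\tilde v,\bdd\psi)\in\image\bdd{\Xi}_X=\mathbbb{Q}_\Gamma$, which is exactly what the paper's terser ``the condition then means that $\grad\tilde v=\bdd\psi$'' encapsulates. Your closing remark that $\mathbb{W}_\Gamma$ could a priori be trivial is accurate and does not affect the validity of the lemma.
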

\begin{proof}
	Since $\tilde{W}_{\Gamma} \subset H^1_{\Gamma}(\Omega)$ and $\mathbbb{G}_{\Gamma} \subset \bdd{G}_{\Gamma}(\Omega)$ are finite dimensional by \ref{hp:tildewgamma-ggamma-def}, $\mathbb{W}_{\Gamma}$ is a finite dimensional subspace of $H_{\Gamma}^2(\Omega)$ thanks to \cref{eq:h2gamma-h1gamma-id}.
	
	Now suppose that $(\tilde{v}, \bdd{\psi}) \in \Kernel \bdd{\Xi}_X$. The condition $( \bdd{\Xi}_X(\tilde{v}, \bdd{\psi}) , \bdd{\eta} )_{\curl} = 0$  for all $\bdd{\eta} \in \image \bdd{\Xi}_X$ then means that $\grad \tilde{v} = \bdd{\psi} \in \mathbbb{G}_{\Gamma}$. By definition, $\tilde{v} \in \mathbb{W}_{\Gamma}$, and so $\Kernel \bdd{\Xi}_X \subseteq \{ (v, \grad v) : v \in \mathbb{W}_{\Gamma} \}$. The reverse inclusion follows by definition, and so \cref{eq:ker-xix} holds.
\end{proof}

\subsection{Well-posedness}

The space $\mathbb{W}_{\Gamma}$ that arose in \cref{eq:wx-tildewx-gx} from considering the discrete kernel provides a convenient way to express necessary and sufficient conditions for  the discrete mixed problem \cref{eq:h2-mixed-problem-fem} to be well-posed.
\begin{lemma}
	\label{lem:h2-mixed-fem-well-posed}
	Let $\tilde{\mathbb{W}}_{\Gamma}$, $\mathbbb{G}_{\Gamma}$, and $\mathbbb{Q}_{\Gamma}$ satisfy \ref{hp:tildewgamma-ggamma-def} and \ref{hp:qchoice} with $\mathbb{W}_{\Gamma}$ defined as in \cref{eq:wx-tildewx-gx}. Then, the mixed problem \cref{eq:h2-mixed-problem-fem} is uniquely solvable for any $F_1 \in \mathbbb{G}_{\Gamma}^*$ and $F_2 \in \tilde{\mathbb{W}}_{\Gamma}^*$ if and only if $\mathbb{W}_{\Gamma}$ satisfies the inf-sup condition: 
	
	\vspace{0.5em}
	
	\begin{description}
		\item[(A3)\label{hp:b-bilinear-infsup}] There exists $\alpha_X > 0$ such that
		\begin{align}
			\label{eq:b-bilinear-infsup-discrete-gen}
			\alpha_{X} &:= \inf_{0 \neq u \in\mathbb{W}_{\Gamma}} \sup_{0 \neq v \in \mathbb{W}_{\Gamma}} \frac{ B(u, v) }{ \|u\|_2 \|v\|_2 }. 
		\end{align}
	\end{description}
\end{lemma}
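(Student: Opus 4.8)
The plan is to recognize \cref{eq:h2-mixed-problem-fem} as a finite-dimensional saddle point problem and appeal to the standard Babu\v{s}ka--Brezzi theory (e.g. \cite[Theorem 4.2.3]{BoffiBrezziFortin13}). First I would set $V := \tilde{\mathbb{W}}_{\Gamma} \times \mathbbb{G}_{\Gamma}$ with the norm $\|(\tilde{v}, \bdd{\psi})\|_V := \|\tilde{v}\|_1 + \|\bdd{\psi}\|_1$, and introduce the bilinear forms $\mathcal{A}((\tilde{w}, \bdd{\gamma}),(\tilde{v}, \bdd{\psi})) := a(\bdd{\gamma}, \bdd{\psi}) + c(\tilde{w}, \tilde{v})$ on $V \times V$ and $\mathcal{B}((\tilde{v}, \bdd{\psi}), \bdd{\eta}) := (\bdd{\Xi}_X(\tilde{v}, \bdd{\psi}), \bdd{\eta})_{\curl}$ on $V \times \mathbbb{Q}_{\Gamma}$, so that \cref{eq:h2-mixed-problem-fem} is precisely the saddle point problem with data $(F_1 + F_2, 0) \in V^* \times \mathbbb{Q}_{\Gamma}^*$. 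Since all spaces are finite dimensional, the associated square operator $V \times \mathbbb{Q}_{\Gamma} \to V^* \times \mathbbb{Q}_{\Gamma}^*$ is invertible if and only if its kernel is trivial, and this is exactly what ``uniquely solvable for any $F_1, F_2$'' amounts to (taking $F_1 = F_2 = 0$ forces the only solution to be zero, hence invertibility; conversely invertibility gives existence and uniqueness for every right-hand side, in particular those of the stated form). By the Babu\v{s}ka--Brezzi theory this invertibility is in turn equivalent to the conjunction of (i) an inf-sup condition for $\mathcal{B}$ on $V \times \mathbbb{Q}_{\Gamma}$ and (ii) an inf-sup condition for $\mathcal{A}$ on $\Kernel \mathcal{B} \times \Kernel \mathcal{B}$.

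The next step is to show that (i) is automatic under \ref{hp:qchoice}. Because $\mathbbb{Q}_{\Gamma} = \image \bdd{\Xi}_X$, the operator $\bdd{\Xi}_X : V \to \mathbbb{Q}_{\Gamma}$ is surjective; choosing for each $\bdd{\eta} \in \mathbbb{Q}_{\Gamma}$ the minimal-norm preimage $(\tilde{v}, \bdd{\psi}) \in (\Kernel \bdd{\Xi}_X)^{\perp}$ gives $\mathcal{B}((\tilde{v}, \bdd{\psi}), \bdd{\eta}) = \|\bdd{\eta}\|_{\curl}^2$, while boundedness of the pseudo-inverse on the finite-dimensional space $\mathbbb{Q}_{\Gamma}$ yields $\|(\tilde{v}, \bdd{\psi})\|_V \leq C_X \|\bdd{\eta}\|_{\curl}$ for some $C_X > 0$; hence $\mathcal{B}$ satisfies (i) with constant $C_X^{-1}$. (The same observation shows $\bdd{q}_X$ is uniquely determined by $(\tilde{w}_X, \bdd{\gamma}_X)$, since $\bdd{q}_X - \bdd{q}_X' \in \mathbbb{Q}_{\Gamma}$ must be $\curl$-orthogonal to $\image \bdd{\Xi}_X = \mathbbb{Q}_{\Gamma}$.)

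It then remains to identify (ii) with \ref{hp:b-bilinear-infsup}. By \cref{lem:ker-xix}, $\Kernel \mathcal{B} = \Kernel \bdd{\Xi}_X = \{ (v, \grad v) : v \in \mathbb{W}_{\Gamma} \}$ with $\mathbb{W}_{\Gamma} \subset H^2_{\Gamma}(\Omega)$, and on this kernel the structural decomposition \cref{eq:b-bilinear-splitting} gives $\mathcal{A}((u, \grad u), (v, \grad v)) = a(\grad u, \grad v) + c(u, v) = B(u, v)$. Moreover, for $v \in H^2_{\Gamma}(\Omega)$ the norm $\|(v, \grad v)\|_V = \|v\|_1 + \|\grad v\|_1$ is equivalent to $\|v\|_2$, with $\|v\|_2 \leq \|v\|_1 + \|\grad v\|_1 \leq 2\|v\|_2$. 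Consequently the inf-sup condition (ii) for $\mathcal{A}$ on $\Kernel \mathcal{B}$ holds with a positive constant if and only if $B(\cdot,\cdot)$ satisfies an inf-sup condition on $\mathbb{W}_{\Gamma}$ with respect to $\|\cdot\|_2$, i.e. if and only if $\alpha_X > 0$ in \cref{eq:b-bilinear-infsup-discrete-gen}; in the finite-dimensional setting the inf-sup over one slot of $\mathcal{A}$ forces the same over the other, since $\Kernel \mathcal{B}$ is a fixed finite-dimensional space. Combining with the reduction from the first paragraph and the fact that (i) is automatic yields the claimed equivalence.

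I expect the only mildly delicate point to be the careful bookkeeping of the ``if and only if'' — in particular, tying ``uniquely solvable for any $F_1, F_2$'' to triviality of the homogeneous kernel, and noting that surjectivity of $\bdd{\Xi}_X$ onto $\mathbbb{Q}_{\Gamma}$ automatically discharges the $\mathcal{B}$-inf-sup so that \ref{hp:b-bilinear-infsup} is the only remaining condition — rather than any substantial analytic difficulty, since the spaces are finite dimensional and all the structural ingredients (\cref{eq:b-bilinear-splitting}, \cref{lem:ker-xix}, and the $H^1 \times H^1$ versus $H^2$ norm equivalence) are already available.
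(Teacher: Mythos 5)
Your proof is correct and rests on the same essential ingredients as the paper's argument: the kernel characterization from \cref{lem:ker-xix}, the identification of the leading bilinear form restricted to $\Kernel\bdd{\Xi}_X$ with $B(\cdot,\cdot)$ on $\mathbb{W}_{\Gamma}$ together with the equivalence of $\|v\|_1 + \|\grad v\|_1$ and $\|v\|_2$, the automatic surjectivity of $\bdd{\Xi}_X$ onto $\mathbbb{Q}_{\Gamma}$ under \ref{hp:qchoice}, and finite-dimensionality to pass between injectivity and surjectivity of the square system. The only difference is one of framing: you invoke the full Brezzi biconditional (uniquely solvable $\Leftrightarrow$ inf-sup for $\mathcal{B}$ and inf-sup for $\mathcal{A}$ on the kernel), whereas the paper establishes both implications directly by the choices $\bdd{\eta} = \bdd{\Xi}_X(\tilde{w}_X, \bdd{\gamma}_X)$ in \cref{eq:h2-mixed-problem-fem-2} and a preimage of $\bdd{q}_X$ in \cref{eq:h2-mixed-problem-fem-1} for the forward direction, and the reduction to unique solvability of \cref{eq:proof:h2gamma-gen-problem} for the converse.
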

\begin{proof}
	Suppose first that \ref{hp:b-bilinear-infsup} holds. Since \cref{eq:h2-mixed-problem-fem} is a finite dimensional linear system, existence follows from uniqueness. To this end, let  $(\tilde{w}_X, \bdd{\gamma}_X, \bdd{q}_X) \in \tilde{\mathbb{W}}_{\Gamma} \times \mathbbb{G}_{\Gamma} \times \mathbbb{Q}_{\Gamma}$ be a solution to \cref{eq:h2-mixed-problem-fem} with $F_1 \equiv 0$ and $F_2 \equiv 0$. Choosing $\bdd{\eta} = \bdd{\Xi}_X(\tilde{w}_X, \bdd{\gamma}_X)$ in \cref{eq:h2-mixed-problem-fem-2} shows that $\bdd{\Xi}_X(\tilde{w}_X, \bdd{\gamma}_X) \equiv 0$. Consequently, $\tilde{w}_X \in \mathbb{W}_{\Gamma}$ by \cref{lem:ker-xix} and $\grad \tilde{w}_X = \bdd{\gamma}_X$. Choosing $(\tilde{v}, \bdd{\psi}) = (v, \grad v)$ for $v \in \mathbb{W}_{\Gamma}$ in \cref{eq:h2-mixed-problem-fem-1} and applying \cref{eq:b-bilinear-infsup-discrete-gen} gives $\tilde{w}_X \equiv 0$ and so $\bdd{\gamma}_X \equiv \bdd{0}$. Finally, choosing any  $(\tilde{v}, \bdd{\psi}) \in \tilde{\mathbb{W}}_{\Gamma} \times \mathbbb{G}_{\Gamma}$ such that $\bdd{\Xi}_X(\tilde{v}, \bdd{\psi}) = \bdd{q}_X$ in \cref{eq:h2-mixed-problem-fem-1}  gives $\bdd{q}_X \equiv \bdd{0}$. Therefore, solutions to \cref{eq:h2-mixed-problem-fem} are unique and thus exist.
	
	Now suppose that \cref{eq:h2-mixed-problem-fem} is uniquely solvable. Thanks to \cref{lem:ker-xix}, the problem
	\begin{align}
		\label{eq:proof:h2gamma-gen-problem}
		w_X \in \mathbb{W}_{\Gamma} : \qquad B(w_X, v) = F_1(\grad v) + F_2(v) \qquad \forall v \in \mathbb{W}_{\Gamma}
	\end{align}
	is also solvable. Again, since \cref{eq:proof:h2gamma-gen-problem} is a finite dimensional square system, solutions to \cref{eq:proof:h2gamma-gen-problem} are therefore also unique, and as a result, $\alpha_X$ defined in \cref{eq:b-bilinear-infsup-discrete-gen} is positive. Indeed, if this were not the case, then there would exist $w_X \in \mathbb{W}_{\Gamma} \setminus \{0\}$ such that $B(w_X, v) = 0$ for all $v \in \mathbb{W}_{\Gamma}$, which would contradict the uniqueness of solutions to \cref{eq:proof:h2gamma-gen-problem}.
\end{proof}

\Cref{lem:h2-mixed-fem-well-posed} shows that given spaces that satisfy \ref{hp:tildewgamma-ggamma-def} and \ref{hp:qchoice}, the well-posedness of the mixed problem \cref{eq:h2-mixed-problem-fem} boils down to checking the inf-sup condition \cref{eq:b-bilinear-infsup-discrete-gen}. As shown in the proof of \cref{lem:h2-mixed-fem-well-posed}, the strict positivity of $\alpha_X$ is equivalent to the well-posedness of \cref{eq:proof:h2gamma-gen-problem}. 

The variational problem \cref{eq:proof:h2gamma-gen-problem} naturally arose from the novel mixed formulation and is nothing more than a $H^2_{\Gamma}(\Omega)$-conforming Galerkin approximation to the original primal problem \cref{eq:h2-bilinear}. The following result shows that the component $\tilde{w}_X$ of the solution to \cref{eq:h2-mixed-problem-fem} coincides exactly with this $\mathbb{W}_{\Gamma}$-conforming approximation to \cref{eq:h2-bilinear-fem}, and mirrors the result at the continuous level given in \cref{cor:h2-mixed-nodual}.

\begin{theorem}
	\label{lem:h2-mixed-problem-fem}
	Let $\tilde{\mathbb{W}}_{\Gamma}$, $\mathbbb{G}_{\Gamma}$, and $\mathbbb{Q}_{\Gamma}$ satisfy \ref{hp:tildewgamma-ggamma-def}-\ref{hp:b-bilinear-infsup}. Then, for any $F_1 \in \mathbbb{G}_{\Gamma}^*$ and $F_2 \in \tilde{\mathbb{W}}_{\Gamma}^*$, the unique solution $(\tilde{w}_X, \bdd{\gamma}_X, \bdd{q}_X) \in \tilde{\mathbb{W}}_{\Gamma} \times \mathbbb{G}_{\Gamma} \times \mathbbb{Q}_{\Gamma}$ to \cref{eq:h2-mixed-problem-fem} satisfies $\tilde{w}_X = w$ and $\bdd{\gamma}_X = \grad w_X$, where $w_X$  is the $H^2$-conforming $\mathbb{W}_{\Gamma}$ \cref{eq:wx-tildewx-gx} approximation defined by \cref{eq:proof:h2gamma-gen-problem}. In particular, $w_X$ is independent of the  splitting \cref{eq:b-bilinear-splitting} of the bilinear form $B(\cdot,\cdot)$. 
\end{theorem}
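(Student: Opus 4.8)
The plan is to replay, at the discrete level, the three-step argument behind \cref{cor:h2-mixed-rewrite}. First, \cref{lem:h2-mixed-fem-well-posed} already guarantees that, under \ref{hp:b-bilinear-infsup}, the discrete mixed problem \cref{eq:h2-mixed-problem-fem} has a \emph{unique} solution $(\tilde{w}_X, \bdd{\gamma}_X, \bdd{q}_X) \in \tilde{\mathbb{W}}_{\Gamma} \times \mathbbb{G}_{\Gamma} \times \mathbbb{Q}_{\Gamma}$ for the given data $F_1 \in \mathbbb{G}_{\Gamma}^*$, $F_2 \in \tilde{\mathbb{W}}_{\Gamma}^*$; it therefore suffices to exhibit one solution of the asserted form.

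Second, I would extract the constraint. Choosing $\bdd{\eta} = \bdd{\Xi}_X(\tilde{w}_X, \bdd{\gamma}_X)$ in \cref{eq:h2-mixed-problem-fem-2} --- which is legitimate precisely because \ref{hp:qchoice} gives $\mathbbb{Q}_{\Gamma} = \image \bdd{\Xi}_X$ --- yields $\| \bdd{\Xi}_X(\tilde{w}_X, \bdd{\gamma}_X) \|_{\curl}^2 = 0$, hence $\grad \tilde{w}_X = \bdd{\gamma}_X$ and $(\tilde{w}_X, \bdd{\gamma}_X) \in \Kernel \bdd{\Xi}_X$. By \cref{lem:ker-xix} this forces $\tilde{w}_X \in \mathbb{W}_{\Gamma} \subset H^2_{\Gamma}(\Omega)$ with $\bdd{\gamma}_X = \grad \tilde{w}_X$.

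Third, I would restrict the test space in \cref{eq:h2-mixed-problem-fem-1} to the kernel: taking $(\tilde{v}, \bdd{\psi}) = (v, \grad v)$ with $v \in \mathbb{W}_{\Gamma}$ makes $\bdd{\Xi}_X(v, \grad v) = \bdd{0}$, so the multiplier term drops out, and using $\bdd{\gamma}_X = \grad \tilde{w}_X$ together with the splittings \cref{eq:b-bilinear-splitting,eq:F-functional-splitting} (applicable since $\mathbb{W}_{\Gamma} \subset H^2_{\Gamma}(\Omega) \subset H^2(\Omega)$) gives $B(\tilde{w}_X, v) = F_1(\grad v) + F_2(v)$ for every $v \in \mathbb{W}_{\Gamma}$; that is, $\tilde{w}_X$ solves \cref{eq:proof:h2gamma-gen-problem}. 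Since \ref{hp:b-bilinear-infsup} makes \cref{eq:proof:h2gamma-gen-problem} a uniquely solvable square system (as noted in the proof of \cref{lem:h2-mixed-fem-well-posed}), I conclude $\tilde{w}_X = w_X$ and hence $\bdd{\gamma}_X = \grad w_X$. Finally, because the right-hand side $F_1(\grad v) + F_2(v)$ equals $F(v)$ on $\mathbb{W}_{\Gamma} \subset H^2(\Omega)$ by \cref{eq:F-functional-splitting} and $a(\grad v, \grad u) + c(v, u) = B(v, u)$ there by \cref{eq:b-bilinear-splitting}, problem \cref{eq:proof:h2gamma-gen-problem} is equivalent to the splitting-free statement $B(w_X, v) = F(v)$ for all $v \in \mathbb{W}_{\Gamma}$, so $w_X$ --- and therefore the discrete mixed solution --- does not depend on the particular decomposition \cref{eq:b-bilinear-splitting}.

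There is no serious obstacle here: all the substantive content has already been packed into \cref{lem:ker-xix} (the clean description of $\Kernel \bdd{\Xi}_X$) and \cref{lem:h2-mixed-fem-well-posed} (well-posedness and the equivalence with \cref{eq:proof:h2gamma-gen-problem}). The one point that genuinely uses a hypothesis rather than bookkeeping is the admissibility of the test function $\bdd{\eta} = \bdd{\Xi}_X(\tilde{w}_X, \bdd{\gamma}_X)$ in step two, where \ref{hp:qchoice} is indispensable --- with a smaller $\mathbbb{Q}_{\Gamma}$ one could not conclude $\grad \tilde{w}_X = \bdd{\gamma}_X$, and the exact correspondence with the $H^2$-conforming scheme would be lost.
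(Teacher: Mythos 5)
Your proof is correct and takes essentially the same route as the paper: test \cref{eq:h2-mixed-problem-fem-2} with $\bdd{\eta} = \bdd{\Xi}_X(\tilde{w}_X,\bdd{\gamma}_X)$ (admissible by \ref{hp:qchoice}) to extract $\grad\tilde{w}_X = \bdd{\gamma}_X$, invoke \cref{lem:ker-xix} to place $\tilde{w}_X$ in $\mathbb{W}_\Gamma$, then restrict the test pair in \cref{eq:h2-mixed-problem-fem-1} to $(v,\grad v)$ with $v\in\mathbb{W}_\Gamma$ and conclude via uniqueness from \ref{hp:b-bilinear-infsup}. The only elaboration beyond the paper is your explicit justification of the ``independent of the splitting'' clause, which the paper leaves tacit; it is a correct and harmless addition.
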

\begin{proof}
	\Cref{lem:h2-mixed-fem-well-posed} shows that \cref{eq:h2-mixed-problem-fem} is well-posed. The same arguments used in the proof of \cref{lem:h2-mixed-fem-well-posed} also show that for any $F_1 \in \mathbbb{G}_{\Gamma}^*$ and $F_2 \in \tilde{\mathbb{W}}_{\Gamma}^*$, $\grad \tilde{w}_X = \bdd{\gamma}_X$. By definition, $\tilde{w}_X \in \mathbb{W}_{\Gamma} \subset H^2_{\Gamma}(\Omega)$. Choosing $(\tilde{v}, \bdd{\psi}) =(v, \grad v)$ for $v \in \mathbb{W}_{\Gamma}$ in \cref{eq:h2-mixed-problem-fem-1} then shows that $B(\tilde{w}_X, v) = F_1(\grad v) + F_2(v)$ for all $v \in \mathbb{W}_{\Gamma}$. Since \ref{hp:b-bilinear-infsup} ensures that solutions to \cref{eq:proof:h2gamma-gen-problem} are unique, $\tilde{w}_X = w_X$.
\end{proof}
\Cref{thm:kirchhoff-mixed-gen} shows that under assumptions \ref{hp:tildewgamma-ggamma-def}-\ref{hp:b-bilinear-infsup}, the mixed formulation \cref{eq:h2-mixed-problem-fem} recovers the $\mathbb{W}_{\Gamma}$-conforming approximation \cref{eq:proof:h2gamma-gen-problem} of the original $H^2_{\Gamma}(\Omega)$ problem, where $\mathbb{W}_{\Gamma}$ is defined by \cref{eq:wx-tildewx-gx}.

\begin{remark}
	\label{rem:any-inner}
	Let $\dinner{\cdot}{\cdot}$ denote any inner-product on $\mathbbb{Q}_{\Gamma}$ and consider the following generalization of \cref{eq:h2-mixed-problem-fem}: Find $(\tilde{w}_X, \bdd{\gamma}_X, \bdd{q}_X) \in \tilde{\mathbb{W}}_{\Gamma} \times \mathbbb{G}_{\Gamma} \times \mathbbb{Q}_{\Gamma}$ such that
	\begin{subequations}
		\label{eq:h2-mixed-problem-fem-any-inner}
		\begin{alignat}{2}
			\label{eq:h2-mixed-problem-fem-any-inner-1}
			a(\bdd{\gamma}_X, \bdd{\psi}) + c(\tilde{w}_X, \tilde{v}) + \dinner{\bdd{\Xi}(\tilde{v}, \bdd{\psi})}{\bdd{q}_X} &= F_1(\bdd{\psi}) + F_2(\tilde{v}) \qquad & &\forall (\tilde{v}, \bdd{\psi}) \in  \tilde{\mathbb{W}}_{\Gamma} \times \mathbbb{G}_{\Gamma}, \\
			\label{eq:h2-mixed-problem-fem-any-inner-2}
			\dinner{\bdd{\Xi}(\tilde{w}_X, \bdd{\gamma}_X)}{\bdd{\eta}} &= 0 \qquad & &\forall \bdd{\eta} \in \mathbbb{Q}_{\Gamma}. 
		\end{alignat}
	\end{subequations} 
	Then, \cref{lem:ker-xix}, \cref{lem:h2-mixed-fem-well-posed}, and \cref{lem:h2-mixed-problem-fem} all hold with \cref{eq:h2-mixed-problem-fem} replaced by \cref{eq:h2-mixed-problem-fem-any-inner} with the exact same proof. The flexibility to choose the inner-product will turn out to be helpful later when we consider a time-dependent problem in \cref{sec:time-dependent-problems}.
\end{remark}

Remarkably, one obtains $w \in \mathbb{W}_{\Gamma}$ without having to construct a basis for the $H^2_{\Gamma}(\Omega)$-conforming space $\mathbb{W}_{\Gamma}$. As such, the preceding framework provides a vehicle whereby one can compute an $H^2_{\Gamma}(\Omega)$-conforming approximation while avoiding having to deal with $H^2_{\Gamma}(\Omega)$-conforming spaces.
Of course, this begs the question of what exactly one is computing given the rather indirect construction of the space $\mathbb{W}_{\Gamma}$. Ideally, one would prefer to start by specifying the $H^2_{\Gamma}(\Omega)$-conforming space $\mathbb{W}_{\Gamma}$. The problem then is how to choose appropriate spaces $\tilde{\mathbb{W}}_{\Gamma}$ and $\mathbbb{G}_{\Gamma}$ satisfying \ref{hp:tildewgamma-ggamma-def} that give rise to the desired space $\mathbb{W}_{\Gamma}$.

\subsection{Choices of $\tilde{\mathbb{W}}_{\Gamma}$ and $\mathbbb{G}_{\Gamma}$ for common $H^2$-conforming spaces $\mathbb{W}_{\Gamma}$}
\label{sec:elements-verify-assumption}

We now show how to choose finite dimensional spaces $\tilde{\mathbb{W}}_{\Gamma}$ and $\mathbbb{G}_{\Gamma}$ so that the space $\mathbb{W}_{\Gamma}$ defined by \cref{eq:wx-tildewx-gx} coincides with existing finite element spaces. A useful tool will be the following result that removes the need to explicitly consider boundary conditions:
\begin{lemma}
	\label{lem:fem-spaces-compat-bcs}
	Suppose that $\mathbb{W} \subset H^2(\Omega)$, $\tilde{\mathbb{W}} \subset H^1(\Omega)$, and $\mathbbb{G} \subset \bdd{H}^1(\Omega)$ satisfy 
	\begin{align}
		\label{eq:w-tildew-g-char}
		\mathbb{W} = \{ \tilde{v} \in \tilde{\mathbb{W}} : \grad \tilde{v} \in \mathbbb{G} \}.
	\end{align}
	Then, $\mathbb{W}_{\Gamma} := \mathbb{W} \cap H^2_{\Gamma}(\Omega)$, $\tilde{\mathbb{W}}_{\Gamma} := \tilde{\mathbb{W}} \cap H^1_{\Gamma}(\Omega)$, and $\mathbbb{G}_{\Gamma} := \mathbbb{G} \cap \bdd{G}_{\Gamma}(\Omega)$ satisfy \cref{eq:wx-tildewx-gx}.
\end{lemma}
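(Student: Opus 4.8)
The plan is to establish the set equality $\mathbb{W}_{\Gamma} = \{v \in \tilde{\mathbb{W}}_{\Gamma} : \grad v \in \mathbbb{G}_{\Gamma}\}$ (i.e. \cref{eq:wx-tildewx-gx}) by a double inclusion, using the characterization of $H^2_{\Gamma}(\Omega)$ in \cref{eq:h2gamma-h1gamma-id} to dispose of the boundary conditions and the hypothesis \cref{eq:w-tildew-g-char} to handle membership in the finite-dimensional spaces. The whole argument is essentially bookkeeping with intersections of function spaces.

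For the inclusion ``$\subseteq$'', I would take $v \in \mathbb{W}_{\Gamma} = \mathbb{W} \cap H^2_{\Gamma}(\Omega)$. From $v \in \mathbb{W}$ together with \cref{eq:w-tildew-g-char}, we get $v \in \tilde{\mathbb{W}}$ and $\grad v \in \mathbbb{G}$. From $v \in H^2_{\Gamma}(\Omega)$ together with the identity \cref{eq:h2gamma-h1gamma-id}, we get $v \in H^1_{\Gamma}(\Omega)$ and $\grad v \in \bdd{G}_{\Gamma}(\Omega)$. Intersecting the two pieces of information yields $v \in \tilde{\mathbb{W}} \cap H^1_{\Gamma}(\Omega) = \tilde{\mathbb{W}}_{\Gamma}$ and $\grad v \in \mathbbb{G} \cap \bdd{G}_{\Gamma}(\Omega) = \mathbbb{G}_{\Gamma}$, which is precisely the claim.

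For the reverse inclusion ``$\supseteq$'', I would take $v \in \tilde{\mathbb{W}}_{\Gamma}$ with $\grad v \in \mathbbb{G}_{\Gamma}$. Then $v \in \tilde{\mathbb{W}}$ and $\grad v \in \mathbbb{G}$, so \cref{eq:w-tildew-g-char} gives $v \in \mathbb{W}$. Moreover, by definition of $\tilde{\mathbb{W}}_{\Gamma}$ and $\mathbbb{G}_{\Gamma}$ we have $v \in H^1_{\Gamma}(\Omega)$ and $\grad v \in \bdd{G}_{\Gamma}(\Omega)$, so \cref{eq:h2gamma-h1gamma-id} gives $v \in H^2_{\Gamma}(\Omega)$. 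Hence $v \in \mathbb{W} \cap H^2_{\Gamma}(\Omega) = \mathbb{W}_{\Gamma}$, completing the proof.

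I do not expect a genuine obstacle here: the only substantive point is that the $H^2$-type boundary conditions on $v$ decouple exactly into an $H^1_{\Gamma}$-condition on $v$ and a $\bdd{G}_{\Gamma}$-condition on $\grad v$, and this decoupling is nothing other than the already-established identity \cref{eq:h2gamma-h1gamma-id}. The one detail worth verifying is purely notational, namely that the spaces $\tilde{\mathbb{W}}_{\Gamma}$, $\mathbbb{G}_{\Gamma}$ obtained here by intersecting $\tilde{\mathbb{W}}$, $\mathbbb{G}$ with the constrained spaces coincide with those appearing in \cref{eq:wx-tildewx-gx} via assumption \ref{hp:tildewgamma-ggamma-def}; this is immediate from the definitions.
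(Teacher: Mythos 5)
Your proof is correct and follows essentially the same double-inclusion argument as the paper, with the only cosmetic difference being that you cite the identity \cref{eq:h2gamma-h1gamma-id} explicitly to decouple the $H^2$-type boundary conditions, whereas the paper writes out the trace conditions $v|_{\Gamma_{cs}}=0$ and $\partial_n v|_{\Gamma_c}=0$ directly. Both routes are equivalent, and your version is if anything a little tidier in its bookkeeping.
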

\begin{proof}
	Let $v \in \mathbb{W}_{\Gamma}$ be given. Thanks to \cref{eq:w-tildew-g-char}, there holds $v \in \tilde{\mathbb{W}}$ and $\grad v \in \mathbbb{G}$. Since $v|_{\Gamma_{cs}} = 0$ and $\partial_n v|_{\Gamma_c} = 0$, there holds $v \in \tilde{\mathbb{W}}_{\Gamma}$ and $\grad v \in \mathbbb{G}_{\Gamma}$. Thus, $\mathbb{W}_{\Gamma} \subseteq \{ \tilde{v} \in \tilde{\mathbb{W}}_{\Gamma} : \grad \tilde{v} \in \mathbbb{G}_{\Gamma} \}$.
	
	Now let $\tilde{v} \in \tilde{\mathbb{W}}_{\Gamma}$ with $\grad \tilde{v} \in \mathbbb{G}_{\Gamma}$. By \cref{eq:w-tildew-g-char}, $\tilde{v} \in \mathbb{W}$. Since $\tilde{v}|_{\Gamma_{cs}} = 0$ and $\grad \tilde{v}|_{\Gamma_c} = \bdd{0}$, there holds $\tilde{v} \in \mathbb{W}_{\Gamma}$, and so $\{ \tilde{v} \in \tilde{\mathbb{W}}_{\Gamma} : \grad \tilde{v} \in \mathbbb{G}_{\Gamma} \} \subseteq \mathbb{W}_{\Gamma}$, which completes the proof.
\end{proof}

\subsubsection{Argyris/TUBA element} 

The classical $H^2$-conforming finite element in two dimensions is the Argyris/TUBA element \cite{Argyris68} consisting of degree $p \geq 5$ polynomials with $C^2$ degrees of freedom at element vertices. The element has been extended to three dimensions by {\v{Z}}en{\'\i}{\v{s}}ek \cite{Zenivsek73} with $p=9$ and generalized to elements of order $p > 9$ by Lai and Schumaker \cite[Chapter 18.2]{Lai07spline}:
\begin{align}
	\label{eq:w-argyis}
	\begin{aligned}
		\mathbb{W} = \{&v \in C^1(\Omega) : v|_{K} \in \mathcal{P}_{p}(K) \ \forall K \in \mathcal{T} \text{ and } \\
		& \text{$v$ is $C^2$ at element vertices and $p \geq 5$ if $d=2$, or } \\
		& \text{$v$ is $C^4$ at element vertices, $C^2$ on element edges, and $p \geq 9$ if $d = 3$}  \}.
	\end{aligned}
\end{align}
The space $\tilde{\mathbb{W}}$ may be taken to be the standard $H^1$-conforming continuous piecewise polynomial space, but the space $\mathbbb{G}$ reflects the additional continuity in the space:
\begin{lemma}
	Let $p \geq 5$ if $d=2$ and $p \geq 9$ if $d=3$ and $\mathbb{W}$ be given by \cref{eq:w-argyis}. If
	\begin{subequations}
		\label{eq:w-argyris-tilde-w-g-choice}
		\begin{align}
			\tilde{\mathbb{W}} &= \{ v \in C(\Omega) : v|_{K} \in \mathcal{P}_{p}(K) \ \forall K \in \mathcal{T} \}, \\
			\mathbbb{G} &= \begin{aligned}[t]
				\{ &\bdd{\theta} \in \bdd{C}(\Omega) : \bdd{\theta}|_{K} \in [\mathcal{P}_{p-1}(K)]^d \ \forall K \in \mathcal{T} \\
				&\text{$\bdd{\theta}$ is $\bdd{C}^1$ at element vertices if $d=2$, or } \\
				&\text{$\bdd{\theta}$ is $\bdd{C}^3$ on element vertices and $\bdd{C}^1$ at element edges if $d=3$}	 \},
			\end{aligned}
		\end{align}
	\end{subequations}
	then \cref{eq:w-tildew-g-char} holds.
\end{lemma}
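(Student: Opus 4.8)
The plan is to establish the two set inclusions in \cref{eq:w-tildew-g-char} directly, using the elementary principle that, for a piecewise smooth function on the conforming simplicial mesh $\mathcal T$, being $C^k$ at a vertex (resp.\ on an edge) means precisely that all partial derivatives up to order $k$ take a common value at that vertex (resp.\ along that edge), and that differentiating a matching condition of order $k$ for a scalar function yields a matching condition of order $k-1$ for each component of its gradient. Once this principle is isolated, all the continuity bookkeeping amounts to applying it once, in one direction or the other.

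First I would prove $\mathbb{W} \subseteq \{\tilde v \in \tilde{\mathbb{W}} : \grad \tilde v \in \mathbbb{G}\}$. Take $v \in \mathbb{W}$ as in \cref{eq:w-argyis}. Since $C^1(\Omega) \subset C(\Omega)$ and $v|_K \in \mathcal P_p(K)$ for every $K$, we have $v \in \tilde{\mathbb{W}}$. On each $K$ the polynomial gradient $\grad(v|_K)$ lies in $[\mathcal P_{p-1}(K)]^d$, and because $v \in C^1(\Omega)$ these elementwise gradients glue into a globally continuous field, so $\grad v \in \bdd C(\Omega)$ with $\grad v|_K \in [\mathcal P_{p-1}(K)]^d$. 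For the extra vertex/edge smoothness demanded by \cref{eq:w-argyris-tilde-w-g-choice}: when $d = 2$, $v$ being $C^2$ at each vertex says all second derivatives of $v$ agree there, i.e.\ all first derivatives of $\grad v$ agree there, so $\grad v$ is $\bdd C^1$ at the vertices; when $d = 3$, $v$ being $C^4$ at vertices and $C^2$ on edges translates, after one differentiation, into $\grad v$ being $\bdd C^3$ at vertices and $\bdd C^1$ on edges. Hence $\grad v \in \mathbbb{G}$.

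For the reverse inclusion, let $\tilde v \in \tilde{\mathbb{W}}$ with $\grad \tilde v \in \mathbbb{G}$, where $\grad \tilde v$ denotes the elementwise gradient (automatically in $[\mathcal P_{p-1}(K)]^d$ on each $K$, so membership in $\mathbbb{G}$ is a statement about continuity only). The key point is that $\tilde v \in C^1(\Omega)$: since $\tilde v$ is continuous and piecewise polynomial on a conforming mesh, $\tilde v \in H^1(\Omega)$ and its weak gradient coincides with the elementwise gradient $\grad \tilde v$, which by hypothesis is continuous on $\Omega$; a function in $H^1(\Omega)$ whose weak gradient has a continuous representative is $C^1(\Omega)$. (Alternatively, one argues facet by facet: continuity of $\tilde v$ across a facet forces its tangential derivatives to match, continuity of $\grad \tilde v$ forces the normal derivative to match, and on the lower-dimensional skeleton continuity of $\grad \tilde v$ gives the matching directly.) Granted $C^1$-continuity, I reverse the differentiation step: for $d = 2$, $\grad \tilde v$ being $\bdd C^1$ at a vertex forces all second derivatives of $\tilde v$ to agree there, so $\tilde v$ is $C^2$ at every vertex; together with $\tilde v|_K \in \mathcal P_p(K)$ and $p \geq 5$ this is exactly $\tilde v \in \mathbb{W}$. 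For $d = 3$, the already established first-order matching together with $\grad \tilde v$ being $\bdd C^3$ at vertices and $\bdd C^1$ on edges forces all derivatives of $\tilde v$ up to order $4$ to agree at vertices and up to order $2$ to agree on edges, so with $\tilde v|_K \in \mathcal P_p(K)$ and $p \geq 9$ we again get $\tilde v \in \mathbb{W}$. This proves \cref{eq:w-tildew-g-char}; combining with \cref{lem:fem-spaces-compat-bcs} then yields the corresponding statement with the boundary conditions of $\mathbb{W}_\Gamma$, $\tilde{\mathbb{W}}_\Gamma$, $\mathbbb{G}_\Gamma$ built in.

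The only step that is not pure bookkeeping, and hence the one I would flag as the main (mild) obstacle, is showing $\tilde v \in C^1(\Omega)$ from the mere facts that $\tilde v$ is $C^0$ and piecewise polynomial and that its elementwise gradient is continuous: this rests on the standard facts that a continuous piecewise polynomial on a conforming simplicial mesh belongs to $H^1(\Omega)$ with weak gradient equal to the elementwise gradient, and that an $H^1$ function with continuous weak gradient is continuously differentiable. Once that is in place, the two inclusions are mirror images of one another under a single differentiation / antidifferentiation of the matching Taylor data at the vertices and edges.
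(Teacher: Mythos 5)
Your proof is correct and follows essentially the same route as the paper's: the forward inclusion is immediate from the definitions, and the reverse inclusion rests on the observation that continuity of the elementwise gradient forces $v \in C^1(\Omega)$, while the $C^1$ (resp.\ $C^3$ and $C^1$) matching of $\grad v$ at vertices (resp.\ vertices and edges) is precisely the $C^2$ (resp.\ $C^4$ and $C^2$) matching of $v$ required by \cref{eq:w-argyis}. The paper's proof simply asserts these facts without elaboration; you have supplied exactly the missing bookkeeping, including a valid justification (either via $H^1$ plus continuous weak gradient, or facet-by-facet) of the step $\tilde v \in C^1(\Omega)$ that the paper leaves implicit.
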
	
\begin{proof}
	The inclusion $\mathbb{W} \subseteq \{ v \in \tilde{W} : \grad v \in \mathbbb{G} \}$ follows by definition. Now let $v \in \tilde{W}$ satisfy $\grad v \in \mathbb{G}$. Then, $v \in C^1(\Omega)$ and $v$ is $C^2$ at element vertices if $d=2$ or $v$ is $C^4$ at element vertices and $C^2$ at element edges if $d=3$, and so $v \in \mathbb{W}$. Consequently, $\{ v \in \tilde{W} : \grad v \in \mathbbb{G} \}\subseteq \mathbb{W}$, which completes the proof of \cref{eq:w-tildew-g-char}.
\end{proof}

\subsubsection{$C^1$ spline space}

Let $\mathbb{W}$ to be the space of $C^1$ polynomials on a simplicial partitioning $\mathcal{T}$ satisfying the conditions in \cref{sec:gen-problem-setting}:
\begin{align}
	\label{eq:w-c1-spline}
	\mathbb{W} = \{ v \in C^1(\Omega) : v|_{K} \in \mathcal{P}_{p}(K) \ \forall K \in \mathcal{T} \}, \qquad p \in \mathbb{N}.
\end{align}
In two dimensions, the space $\mathbb{W}$ is often called the Morgan-Scott space \cite{MorganScott75}. It is known that $\mathbb{W}$ has a local (elementwise) basis for polynomial orders $p \geq 5$ in two dimensions and for $p \geq 9$ in three dimensions \cite[Theorem 17.29]{Lai07spline}. For smaller values of $p$, these spaces need not have a local basis; see e.g. \cite{AlfredPiperSchumaker87} for the case $d=2$ and $p=4$. 
Regardless, $\mathbb{W}$ admits the decomposition \cref{eq:w-tildew-g-char} with $\tilde{\mathbb{W}}$ and $\mathbbb{G}$ consisting of standard continuous piecewise polynomials for all polynomial orders $p \in \mathbb{N}$:
\begin{lemma}
	\label{lem:w-c1-spline-tilde-w-g-choice}
	Let $p \geq 1$ and $\mathbb{W}$ be given by \cref{eq:w-c1-spline}. If
	\begin{subequations}
		\label{eq:w-c1-spline-tilde-w-g-choice}
		\begin{align}
			\tilde{\mathbb{W}} &= \{ v \in C(\Omega) : v|_{K} \in \mathcal{P}_{p}(K) \ \forall K \in \mathcal{T} \}, \\
			\mathbbb{G} &= \{ \bdd{\theta} \in \bdd{C}(\Omega) : \bdd{\theta}|_{K} \in [\mathcal{P}_{p-1}(K)]^d \ \forall K \in \mathcal{T} \},
		\end{align}
	\end{subequations}
	then \cref{eq:w-tildew-g-char} holds.
\end{lemma}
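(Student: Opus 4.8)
The plan is to verify the set identity \cref{eq:w-tildew-g-char} by a double inclusion, both directions being essentially immediate once one records that differentiating a degree-$p$ polynomial produces a degree-$(p-1)$ polynomial — which is exactly the degree used for $\mathbbb{G}$ in \cref{eq:w-c1-spline-tilde-w-g-choice} — and that, for piecewise polynomials subordinate to $\mathcal{T}$, being globally $C^1$ is the same as being globally $C^0$ with a globally $C^0$ elementwise gradient.

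For $\mathbb{W} \subseteq \{\tilde v \in \tilde{\mathbb{W}} : \grad \tilde v \in \mathbbb{G}\}$: given $v \in \mathbb{W}$ we have $v \in C^1(\Omega) \subseteq C(\Omega)$ with $v|_K \in \mathcal{P}_p(K)$, so $v \in \tilde{\mathbb{W}}$; and $\grad v \in \bdd{C}(\Omega)$ with $\grad v|_K \in [\mathcal{P}_{p-1}(K)]^d$, so $\grad v \in \mathbbb{G}$. For the reverse inclusion: given $v \in \tilde{\mathbb{W}}$ with $\grad v \in \mathbbb{G}$, note that $\tilde{\mathbb{W}} \subset H^1(\Omega)$ means the weak gradient of $v$ coincides a.e. with its elementwise classical gradient, and the hypothesis identifies this weak gradient with the continuous field $\grad v \in \mathbbb{G}$. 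Hence $v$ is continuous with continuous gradient, so $v \in C^1(\Omega)$, and since $v|_K \in \mathcal{P}_p(K)$ we conclude $v \in \mathbb{W}$; this establishes \cref{eq:w-tildew-g-char}.

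The only point deserving a sentence of justification — and hence the (very mild) crux — is the implication ``$v$ continuous with continuous elementwise gradient $\Rightarrow v \in C^1(\Omega)$''. I would handle this in the usual way: across any interior face $F = \bar{K}_1 \cap \bar{K}_2$, continuity of $v$ forces the tangential derivatives of $v$ along $F$ to agree from both sides, while single-valuedness of $\grad v \in \mathbbb{G} \subset \bdd{C}(\Omega)$ forces the normal derivative to agree as well, so the elementwise-smooth function $v$ has matching first derivatives across every interface; equivalently, since $v \in H^1(\Omega)$ with weak gradient represented by a continuous function, the fundamental theorem of calculus along segments shows $v$ is classically differentiable with that continuous gradient. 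Everything else is degree bookkeeping.
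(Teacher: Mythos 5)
Your proof is correct and takes essentially the same double-inclusion approach as the paper; the paper simply asserts the reverse inclusion (``$v \in C^1(\Omega)$ and so $v \in \mathbb{W}$'') without comment, whereas you spell out the standard justification that a piecewise-polynomial $v \in C(\Omega) \cap H^1(\Omega)$ whose weak gradient is represented by a continuous function is classically $C^1$.
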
	
\begin{proof}
	The inclusion $\mathbb{W} \subseteq \{ v \in \tilde{W} : \grad v \in \mathbbb{G} \}$ follows by definition. Now let $v \in \tilde{\mathbb{W}}$ satisfy $\grad v \in \mathbb{G}$. Then, $v \in C^1(\Omega)$ and so $v \in \mathbb{W}$. Consequently, $\{ v \in \tilde{\mathbb{W}} : \grad v \in \mathbbb{G} \}\subseteq \mathbb{W}$, which completes the proof of \cref{eq:w-tildew-g-char}.
\end{proof}

\subsubsection{$H^2$-conforming macroelements}

$H^2$-conforming spaces are often constructed using macroelements (see e.g. \cite[Chapters 6 \& 18]{Lai07spline}). Let $\mathcal{T}$ be a partition as before, and let $\mathcal{T}_{A}$ denote the Alfeld splitting obtained by subdividing every simplex of $\mathcal{T}$ into $d+1$ subsimplices by connecting the barycenter (or other internal point) of each element to the vertices of the element. Let 
\begin{align*}
	\mathbb{W} = \{ v \in C^1(\Omega) : v|_{K} \in \mathcal{P}_p(K) \ \forall K \in \mathcal{T}_A \}, \qquad p \geq 3,
\end{align*}
so that, in the 2D case with $p=3$, $\mathbb{W}$ coincides with the Hseih-Clough-Tocher (HCT) space \cite{Clough65}. Applying \cref{lem:w-c1-spline-tilde-w-g-choice} to the partition $\mathcal{T}_A$ shows that this choice of $\mathbb{W}$ satisfies \cref{eq:w-tildew-g-char} with
\begin{subequations}
	\label{eq:w-c1-alfeld-tildew-g}
	\begin{align*}
		\tilde{\mathbb{W}} &= \{ v \in C(\Omega) : v|_{K} \in \mathcal{P}_{p}(K) \ \forall K \in \mathcal{T}_A \}, \\
		\mathbbb{G} &= \{ \bdd{\theta} \in \bdd{C}(\Omega) : \bdd{\theta}|_{K} \in [\mathcal{P}_{p-1}(K)]^d \ \forall K \in \mathcal{T}_A \}.
	\end{align*}
\end{subequations}

In the 3D case, one could also use the Worsey-Farin splitting $\mathcal{T}_{WF}$ obtained by dividing every tetrahedron in $\mathcal{T}$ into 12 subtetrahedra; see \cite[Chapter 16.7]{Lai07spline} for the precise definition. The Worsey-Farin $C^1$-spline space for $p \geq 3$ is then defined by
\begin{align*}
	\mathbb{W} = \{ v \in C^1(\Omega) : v|_{K} \in \mathcal{P}_p(K) \ \forall K \in \mathcal{T}_{WF}, \ \text{$v$ is $C^2$ at the incenter of $T$} \ \forall T \in \mathcal{T} \}.
\end{align*}
Theorem 18.11 of \cite{Lai07spline} shows that $\mathbb{W}$ has a local basis when $p=3$, while similar arguments show that the result holds more generally for $p \geq 3$. In fact, it is not necessary to explicitly require $C^2$-continuity at the incenter since, as shown in \cite[Remark 9]{Hecklin2008} and \cite[Theorem 4]{FloaterHu20}, the $C^2$-continuity automatically holds, and so $\mathbb{W}$ may be written in the equivalent form
\begin{align*}
	\mathbb{W} = \{ v \in C^1(\Omega) : v|_{K} \in \mathcal{P}_p(K) \ \forall K \in \mathcal{T}_{WF} \}.
\end{align*}
Applying \cref{lem:w-c1-spline-tilde-w-g-choice} to the partition $\mathcal{T}_{WF}$ shows that the Worsey-Farin space may be characterized in the form \cref{eq:w-tildew-g-char} with $\tilde{\mathbb{W}}$ and $\mathbbb{G}$ given by \cref{eq:w-c1-alfeld-tildew-g} after replacing $\mathcal{T}_A$ with $\mathcal{T}_{WF}$.

\section{Algorithmic implementation}
\label{sec:iter-pen}

Summarizing thus far, we have succeeded in our primary goal of producing a finite element scheme involving at most $H^1$-conforming spaces that delivers $H^2$-conforming approximations to \cref{eq:h2-bilinear}. In principle, by choosing spaces that satisfy \ref{hp:tildewgamma-ggamma-def}-\ref{hp:b-bilinear-infsup} and solving a mixed problem \cref{eq:h2-mixed-problem-fem-any-inner} (which makes no mention of any  $H^2$-conforming spaces), we can recover the conforming approximation to \cref{eq:h2-bilinear}. However, it is not so straightforward to compute the solution to \cref{eq:h2-mixed-problem-fem-any-inner}. One problem is that a basis for the space $\image \bdd{\Xi}_{X}$ in \ref{hp:qchoice} is often not known or available, which means that standard direct discretizations of the mixed problem \cref{eq:h2-mixed-problem-fem-any-inner} cannot be applied. Instead, we propose to apply the iterated penalty method \cite{Brenner08,FortinGlow83,Glowinski84} to solve \cref{eq:h2-mixed-problem-fem-any-inner} which has the advantage of not requiring the construction of a basis for $\image \bdd{\Xi}_{X}$.

Let $\tilde{\mathbb{W}}_{\Gamma}$, $\mathbbb{G}_{\Gamma}$, and $\mathbbb{Q}_{\Gamma}$ satisfy \ref{hp:tildewgamma-ggamma-def}-\ref{hp:b-bilinear-infsup}, let $\mathbb{W}_{\Gamma}$ be given by \cref{eq:wx-tildewx-gx}, and let $\dinner{\cdot}{\cdot}$ be any inner-product on $\image \bdd{\Xi}_X$. We define the following bilinear form on $(\tilde{\mathbb{W}}_{\Gamma} \times \mathbbb{G}_{\Gamma}) \times (\tilde{\mathbb{W}}_{\Gamma} \times \mathbbb{G}_{\Gamma})$:
\begin{align}
	\label{eq:Alambda-def}
	A_{\lambda}(\tilde{w}, \bdd{\gamma}; \tilde{v}, \bdd{\psi}) &:= a(\bdd{\gamma}, \bdd{\psi}) + c(\tilde{w}, \tilde{v}) + \lambda \dinner{ \bdd{\Xi}_X(\tilde{w}, \bdd{\gamma})}{\bdd{\Xi}_X(\tilde{v}, \bdd{\psi})}.
\end{align}
Given $\lambda > 0$  and an initial approximation $(\tilde{u}^0, \bdd{\phi}^0) \in \tilde{\mathbb{W}}_{\Gamma} \times \mathbbb{G}_{\Gamma}$, the iterated penalty method for \cref{eq:h2-mixed-problem-fem-any-inner} reads as follows: For $n = 0,1,\ldots$, define $(\tilde{w}^n, \bdd{\gamma}^n), (\tilde{u}^{n+1}, \bdd{\phi}^{n+1}) \in \tilde{\mathbb{W}}_{\Gamma} \times \mathbbb{G}_{\Gamma}$ by
\begin{subequations}
	\label{eq:iter-penalty-morgan-scott-gen}
	\begin{alignat}{2}
		\label{eq:iter-penalty-morgan-scott-gen-1}
		A_{\lambda}(\tilde{w}^n, \bdd{\gamma}^n; \tilde{v}, \bdd{\psi})  &= F_1(\bdd{\psi}) + F_2(\tilde{v}) \qquad & & \notag \\
		&\qquad - \dinner{ \bdd{\Xi}_X(\tilde{u}^{n}, \bdd{\phi}^{n})}{ \bdd{\Xi}_X (\tilde{v}, \bdd{\psi})  } \qquad & & \forall (\tilde{v}, \bdd{\psi}) \in \tilde{\mathbb{W}}_{\Gamma} \times \mathbbb{G}_{\Gamma}, \\
		\label{eq:iter-penalty-morgan-scott-gen-2}
		\tilde{u}^{n+1} &= \tilde{u}^{n} + \lambda \tilde{w}^n, \qquad & & \\
		\bdd{\phi}^{n+1} &= \bdd{\phi}^{n} + \lambda \bdd{\gamma}^n. \qquad & &
	\end{alignat}
\end{subequations}
Under appropriate conditions (given below) the iterates converge to the solution to \cref{eq:h2-mixed-problem-fem-any-inner}: 
\begin{align*}
	\tilde{w}^n \to \tilde{w}_X, \quad \bdd{\gamma}^n \to \bdd{\gamma}_{X}, \quad \text{and} \quad \bdd{\Xi}_X(\tilde{u}^n, \bdd{\phi}^n) \to \bdd{q}_X.
\end{align*}
The rate of convergence will depend on the choice of spaces $\tilde{\mathbb{W}}_{\Gamma}$ and $\mathbbb{G}_{\Gamma}$ and the choice of inner-product $\dinner{\cdot}{\cdot}$ and can be quantified in terms of the following constants
\begin{alignat}{2}
	\label{eq:inf-sup-xi}
	\beta_X &:= \inf_{ \substack{\bdd{\eta} \in \image \bdd{\Xi}_X \\ \bdd{\eta} \neq \bdd{0} } } \sup_{ \substack{ (\tilde{v}, \bdd{\psi}) \in \tilde{\mathbb{W}}_{\Gamma} \times \mathbbb{G}_{\Gamma} \\ (\tilde{v}, \bdd{\psi}) \neq (0, \bdd{0}) } } \frac{ \dinner{\bdd{\Xi}(\tilde{v}, \bdd{\psi})}{ \bdd{\eta} } }{(\|\tilde{v}\|_1 + \|\bdd{\psi}\|_1) \tnorm{\bdd{\eta}} }, \qquad & &\text{where } \tnorm{\bdd{\eta}}^2 := \dinner{\bdd{\eta}}{\bdd{\eta}},\\
	\Upsilon &:= \sup_{ (\tilde{v}, \bdd{\psi}) \in \tilde{\mathbb{W}}_{\Gamma} \times \mathbbb{G}_{\Gamma} } \frac{ \tnorm{ \bdd{\Xi}_X(\tilde{v}, \bdd{\psi})} }{ \| \tilde{v} \|_1 + \|\bdd{\gamma}\|_1 }. \qquad & & \notag
\end{alignat}
as follows:
\begin{theorem}
	\label{thm:iter-penalty-gen}
	Let $(\tilde{w}_X, \bdd{\gamma}_X, \bdd{q}_X) \in \tilde{\mathbb{W}}_{\Gamma} \times \mathbbb{G}_{\Gamma} \times \image \bdd{\Xi}_X$ satisfy \cref{eq:h2-mixed-problem-fem-any-inner} and let 
	\begin{align}
		\label{eq:lambda0-cond}
		\lambda_0 := \frac{5 M}{4 \beta_X^2} \left( 1 + \frac{2 M}{\alpha_X}  \right)^2,
	\end{align}
	where $M$ and $\alpha_X$ are defined in \cref{eq:ac-bilinear-bounded} and \cref{eq:b-bilinear-infsup-discrete-gen}, respectively.
	For $\lambda > \lambda_0$, the iterates $(\tilde{w}^n, \bdd{\gamma}^n), (\tilde{u}^{n}, \bdd{\phi}^{n}) \in \tilde{\mathbb{W}}_{\Gamma} \times \mathbbb{G}_{\Gamma}$ given by \cref{eq:iter-penalty-morgan-scott-gen} are well-defined, and satisfy
	\begin{align}
		\label{eq:iter-penalty-gen-convergence-1}
		\| \tilde{w}^n - \tilde{w}_X \|_1 + \|\bdd{\gamma}^n - \bdd{\gamma}_X \|_1 &\leq \frac{1}{\beta_X} \left( 1 + \frac{2 M}{\alpha_X} \right) \tnorm{\bdd{\Xi}_X(\tilde{w}^n, \bdd{\gamma}^n)}, \\
		\label{eq:iter-penalty-gen-convergence-2}
		\tnorm{ \bdd{\Xi}_X(\tilde{u}^n, \bdd{\phi}^n) - \bdd{q}_X } &\leq \frac{1}{\beta_X} \left\{  \frac{M}{\beta_X} \left( 1 + \frac{2 M}{\alpha_X} \right) + \Upsilon \lambda \right\} \tnorm{ \bdd{\Xi}_X(\tilde{w}^n, \bdd{\gamma}^n)},
	\end{align}
	where
	\begin{align}
		\label{eq:iter-penalty-gen-xip}
		\tnorm{ \bdd{\Xi}_X(\tilde{w}^n, \bdd{\gamma}^n) } \leq \Upsilon \left\{ \frac{8}{9} \left( \frac{\lambda_0}{\lambda - \lambda_0} \right) \right\}^n  \left( 	\| \tilde{w}^0 - \tilde{w}_X \|_1 + \|\bdd{\gamma}^0 - \bdd{\gamma}_X \|_1 \right).
	\end{align}
	
\end{theorem}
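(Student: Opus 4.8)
The plan is to adapt the classical convergence theory of the iterated penalty method (see e.g.\ \cite{Brenner08,FortinGlow83,Glowinski84}) to the present non-symmetric, non-coercive setting. Write $U := \tilde{\mathbb{W}}_{\Gamma} \times \mathbbb{G}_{\Gamma}$ with the norm $\|(\tilde v, \bdd{\psi})\|_U := \|\tilde v\|_1 + \|\bdd{\psi}\|_1$, abbreviate $\mathcal{A}(\tilde w, \bdd{\gamma}; \tilde v, \bdd{\psi}) := a(\bdd{\gamma}, \bdd{\psi}) + c(\tilde w, \tilde v)$, which is bounded by $M$ on $U \times U$ by \cref{eq:ac-bilinear-bounded}, and recall from \ref{hp:qchoice} that $\bdd{\Xi}_X : U \to \mathbbb{Q}_{\Gamma} = \image \bdd{\Xi}_X$ is onto, so that (by surjectivity and finite-dimensionality) $\beta_X$ of \cref{eq:inf-sup-xi} is strictly positive; a Cauchy--Schwarz estimate in $\dinner{\cdot}{\cdot}$ also gives $\beta_X \le \Upsilon$. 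The key structural input, from \cref{lem:ker-xix}, is that $\Kernel \bdd{\Xi}_X = \{(v, \grad v) : v \in \mathbb{W}_{\Gamma}\}$ and $\mathcal{A}(v, \grad v; v', \grad v') = B(v, v')$, so \ref{hp:b-bilinear-infsup} furnishes an inf-sup condition for $\mathcal{A}$ on $\Kernel \bdd{\Xi}_X$ with constant comparable to $\alpha_X$ (up to the equivalence of $\|v\|_2$ and $\|v\|_1 + \|\grad v\|_1$). First I would check that the iterates are well-defined, i.e.\ that $A_\lambda(\cdot\,;\cdot)$ is invertible on $U$ for $\lambda > \lambda_0$: splitting $U = \Kernel \bdd{\Xi}_X \oplus (\Kernel \bdd{\Xi}_X)^\perp$ orthogonally in $\|\cdot\|_U$, where $\|(\tilde v,\bdd{\psi})\|_U \le \beta_X^{-1} \tnorm{\bdd{\Xi}_X(\tilde v, \bdd{\psi})}$ on the complement, and testing $A_\lambda(\tilde w, \bdd{\gamma}; \cdot) \equiv 0$ against the two subspaces, the two inf-sup conditions and boundedness force $(\tilde w, \bdd{\gamma}) = \bdd{0}$ as soon as $\lambda$ exceeds a fixed multiple of $M \beta_X^{-2}(1 + M/\alpha_X)$; the threshold $\lambda_0$ in \cref{eq:lambda0-cond} is larger, so $\lambda > \lambda_0$ suffices and \cref{eq:iter-penalty-morgan-scott-gen} is uniquely solvable.

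Next I would pass to the recursion for the discrete multiplier $\bdd{q}^n := \bdd{\Xi}_X(\tilde u^n, \bdd{\phi}^n) \in \mathbbb{Q}_{\Gamma}$. The update \cref{eq:iter-penalty-morgan-scott-gen-2} reads $\bdd{q}^{n+1} = \bdd{q}^n + \lambda \bdd{r}^n$ with $\bdd{r}^n := \bdd{\Xi}_X(\tilde w^n, \bdd{\gamma}^n)$, and inserting this and \cref{eq:Alambda-def} into \cref{eq:iter-penalty-morgan-scott-gen-1} shows that $\mathcal{A}(\tilde w^n, \bdd{\gamma}^n; \tilde v, \bdd{\psi}) + \dinner{\bdd{q}^{n+1}}{\bdd{\Xi}_X(\tilde v, \bdd{\psi})} = F_1(\bdd{\psi}) + F_2(\tilde v)$ for all $(\tilde v, \bdd{\psi}) \in U$. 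Subtracting the analogous identity for the exact solution $((\tilde w_X, \bdd{\gamma}_X), \bdd{q}_X)$ of \cref{eq:h2-mixed-problem-fem-any-inner}---and noting that testing \cref{eq:h2-mixed-problem-fem-any-inner-2} against $\bdd{\Xi}_X(\tilde w_X, \bdd{\gamma}_X) \in \mathbbb{Q}_{\Gamma}$ gives $\bdd{\Xi}_X(\tilde w_X, \bdd{\gamma}_X) = \bdd{0}$, hence $\bdd{r}^n = \bdd{\Xi}_X e^n$ where $e^n := (\tilde w^n - \tilde w_X, \bdd{\gamma}^n - \bdd{\gamma}_X)$---yields
\[
	\mathcal{A}(e^n; \tilde v, \bdd{\psi}) + \dinner{\bdd{q}^{n+1} - \bdd{q}_X}{\bdd{\Xi}_X(\tilde v, \bdd{\psi})} = 0 \qquad \forall (\tilde v, \bdd{\psi}) \in U.
\]
Estimate \cref{eq:iter-penalty-gen-convergence-1} is the ``distance-to-kernel'' bound for this relation: decomposing $e^n = e^n_0 + e^n_1$ with $e^n_0 \in \Kernel \bdd{\Xi}_X$, the complementary part obeys $\|e^n_1\|_U \le \beta_X^{-1}\tnorm{\bdd{r}^n}$ (as $\bdd{\Xi}_X e^n_1 = \bdd{r}^n$), while testing the relation against $\Kernel \bdd{\Xi}_X$ and using the inf-sup of $\mathcal{A}$ there bounds $\|e^n_0\|_U$ by $(2M/\alpha_X)\|e^n_1\|_U$, and adding gives \cref{eq:iter-penalty-gen-convergence-1}. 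Testing the relation against all of $U$ and using the inf-sup of $\bdd{\Xi}_X$ gives $\tnorm{\bdd{q}^{n+1} - \bdd{q}_X} \le M\beta_X^{-1}\|e^n\|_U$, and then \cref{eq:iter-penalty-gen-convergence-2} follows from $\bdd{q}^n - \bdd{q}_X = (\bdd{q}^{n+1} - \bdd{q}_X) - \lambda\bdd{r}^n$ together with \cref{eq:iter-penalty-gen-convergence-1} and $\beta_X \le \Upsilon$.

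The heart of the proof is the geometric decay \cref{eq:iter-penalty-gen-xip}. Subtracting the identity of the previous paragraph at consecutive indices $n$ and $n-1$ gives $\mathcal{A}(d^n; \tilde v, \bdd{\psi}) + \lambda\dinner{\bdd{r}^n}{\bdd{\Xi}_X(\tilde v, \bdd{\psi})} = 0$ for all $(\tilde v, \bdd{\psi}) \in U$, where $d^n := (\tilde w^n - \tilde w^{n-1}, \bdd{\gamma}^n - \bdd{\gamma}^{n-1})$ satisfies $\bdd{\Xi}_X d^n = \bdd{r}^n - \bdd{r}^{n-1}$. The same distance-to-kernel bound gives $\|d^n\|_U \le \beta_X^{-1}(1 + 2M/\alpha_X)\tnorm{\bdd{r}^n - \bdd{r}^{n-1}}$, and testing against all of $U$ with the inf-sup of $\bdd{\Xi}_X$ gives $\lambda\tnorm{\bdd{r}^n} \le M\beta_X^{-1}\|d^n\|_U \le \kappa\,\tnorm{\bdd{r}^n - \bdd{r}^{n-1}} \le \kappa\bigl(\tnorm{\bdd{r}^n} + \tnorm{\bdd{r}^{n-1}}\bigr)$ with $\kappa := M\beta_X^{-2}(1 + 2M/\alpha_X)$. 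From \cref{eq:lambda0-cond} one has $\kappa \le \tfrac45\lambda_0 < \lambda_0 < \lambda$, so the $\tnorm{\bdd{r}^n}$ term may be absorbed, yielding $\tnorm{\bdd{r}^n} \le \tfrac{\kappa}{\lambda - \kappa}\tnorm{\bdd{r}^{n-1}} \le \tfrac89 \tfrac{\lambda_0}{\lambda - \lambda_0}\tnorm{\bdd{r}^{n-1}}$ for every $n \ge 1$; iterating down to $n = 0$ and using $\tnorm{\bdd{r}^0} = \tnorm{\bdd{\Xi}_X e^0} \le \Upsilon\|e^0\|_U$ gives \cref{eq:iter-penalty-gen-xip}. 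The main obstacle is precisely this last estimate: one must turn the one-sided inequality $\lambda\tnorm{\bdd{r}^n} \le \kappa(\tnorm{\bdd{r}^n} + \tnorm{\bdd{r}^{n-1}})$ into a genuine contraction by absorption---legitimate only once $\lambda > \lambda_0$---and keep the norm-equivalence bookkeeping (in particular the passage from \ref{hp:b-bilinear-infsup}, stated in the $\|\cdot\|_2$-norm, to an inf-sup for $\mathcal{A}$ on $\Kernel \bdd{\Xi}_X$ in the $\|\cdot\|_U$-norm) tight enough for the explicit rate to emerge.
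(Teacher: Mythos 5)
Your proof is correct, and the overall architecture mirrors the paper's: reduce to an error identity, use a distance-to-kernel bound for \cref{eq:iter-penalty-gen-convergence-1}, use the inf-sup condition \cref{eq:inf-sup-xi} for \cref{eq:iter-penalty-gen-convergence-2}. The constants all check out: $\kappa := M\beta_X^{-2}(1+2M/\alpha_X)$ satisfies $\kappa/\lambda_0 = \tfrac{4}{5}(1+2M/\alpha_X)^{-1} < \tfrac45$, and since $x\mapsto x/(\lambda-x)$ is increasing on $(0,\lambda)$, one indeed has $\tfrac{\kappa}{\lambda-\kappa} \le \tfrac{4\lambda_0/5}{\lambda - 4\lambda_0/5} \le \tfrac89\tfrac{\lambda_0}{\lambda-\lambda_0}$ for all $\lambda > \lambda_0$ (the last step reduces to $-\lambda \le \lambda_0$). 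Your bound $\tnorm{\bdd{q}^n - \bdd{q}_X} \le (M\beta_X^{-2}(1+2M/\alpha_X) + \lambda)\tnorm{\bdd{r}^n}$ is in fact slightly \emph{sharper} than \cref{eq:iter-penalty-gen-convergence-2}, since $\lambda \le (\Upsilon/\beta_X)\lambda$.

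Where you genuinely diverge from the paper is the proof of the geometric decay \cref{eq:iter-penalty-gen-xip}. The paper does not difference consecutive iterates; instead it proves a quantitative coercivity estimate for $A_\lambda$ (\cref{lem:ip-iterates-defined}, \cref{eq:Alam-lower-bound}) via a hand-picked test function $(\tilde v,\bdd{\psi}) = (v,\grad v) + M^{-1}(\tilde u,\bdd{\phi})$, where $v$ solves a discrete adjoint problem and $(\tilde u,\bdd{\phi})$ is the $\mathbb{K}^\circ$-component. Applied to the error $(\tilde\epsilon^{n+1},\bdd{e}^{n+1}) \in \mathbb{K}^\circ$ (the kernel part vanishes, so the test function is simply $M^{-1}(\tilde\epsilon^{n+1},\bdd{e}^{n+1})$), and combined with the algebraic identity $A_\lambda(\tilde\epsilon^{n+1},\bdd{e}^{n+1};\cdot) = a(\bdd{e}^n,\cdot) + c(\tilde\epsilon^n,\cdot)$, this yields a one-step contraction in the error norm $\|\tilde\epsilon^n\|_1 + \|\bdd{e}^n\|_1$, after which $\Upsilon$ is used once to convert to $\tnorm{\bdd{\Xi}_X(\tilde w^n,\bdd{\gamma}^n)}$. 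You instead subtract error relations at indices $n$ and $n-1$ to get a relation for $d^n = e^n - e^{n-1} \in \mathbb{K}^\circ$, combine the distance-to-kernel bound and the inf-sup of $\bdd{\Xi}_X$ into the one-sided inequality $\lambda\tnorm{\bdd{r}^n} \le \kappa\tnorm{\bdd{r}^n - \bdd{r}^{n-1}}$, and absorb via the triangle inequality to get a direct contraction in $\tnorm{\bdd{r}^n}$, using $\Upsilon$ only at the initialization $n=0$. Your route is closer to the classical iterated-penalty analysis of Fortin--Glowinski/Brenner--Scott, while the paper's route front-loads all the work into the auxiliary stability lemma \cref{lem:ip-iterates-defined} (which it also needs anyway to show the iterates are well-defined); remarkably, both arrive at the identical factor $\tfrac89\tfrac{\lambda_0}{\lambda-\lambda_0}$.
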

\Cref{thm:iter-penalty-gen} shows that if $\lambda$ is sufficiently large, then the iterative process \cref{eq:iter-penalty-morgan-scott-gen} is well-defined and converges at a geometric rate to the solution to \cref{eq:h2-mixed-problem-fem-any-inner}. The remainder of this section is concerned with the proof of \cref{thm:iter-penalty-gen}, which extends the analysis in \cite[Chapter 13]{Brenner08} to cases where the bilinear form $B(\cdot,\cdot)$ need not be positive semidefinite.

\subsection{Auxiliary results}

For the remainder of this section, let $\dinner{\cdot}{\cdot}$ be an inner-product on $\image \bdd{\Xi}_X$ and let $\tnorm{\cdot}$ denote the induced norm \cref{eq:inf-sup-xi}. The following result is a consequence of \cite[Lemma 12.5.10]{Brenner08}:
\begin{lemma}
	\label{lem:invert-xi-inf-sup}
	Let $\bdd{\Xi}_X$ and $\beta_X$ be defined as in \cref{eq:xip-def} and \cref{eq:inf-sup-xi}. Then $\bdd{\Xi}_X$ has a bounded right inverse $\bdd{L}_X : \image \bdd{\Xi}_X \to \tilde{\mathbb{W}}_{\Gamma} \times \mathbbb{G}_{\Gamma}$ satisfying
	\begin{align*}
		\bdd{\Xi}_X \bdd{L}_X \bdd{\eta} = \bdd{\eta} \quad \text{and} \quad \| \bdd{L}_X \bdd{\eta} \|_1 \leq \beta_X^{-1} \tnorm{\bdd{\eta}}  \qquad \forall \bdd{\eta} \in  \image \bdd{\Xi}_X.
	\end{align*}
\end{lemma}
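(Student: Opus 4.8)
The plan is to recognize this statement as a direct instance of \cite[Lemma 12.5.10]{Brenner08} applied to the bounded bilinear form $b\big((\tilde{v},\bdd{\psi}),\bdd{\eta}\big) := \dinner{\bdd{\Xi}_X(\tilde{v},\bdd{\psi})}{\bdd{\eta}}$ on the (finite-dimensional) pair $\tilde{\mathbb{W}}_{\Gamma} \times \mathbbb{G}_{\Gamma}$, normed by $(\tilde{v},\bdd{\psi}) \mapsto \|\tilde{v}\|_1 + \|\bdd{\psi}\|_1$, and $\image \bdd{\Xi}_X$, equipped with the inner product $\dinner{\cdot}{\cdot}$ and induced norm $\tnorm{\cdot}$. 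Two observations make the citation applicable. First, $\bdd{\Xi}_X$ maps $\tilde{\mathbb{W}}_{\Gamma} \times \mathbbb{G}_{\Gamma}$ onto $\image \bdd{\Xi}_X$ by the very definition of the latter, so $b$ is nondegenerate in the test variable, and, with the dual of $\big(\image \bdd{\Xi}_X, \dinner{\cdot}{\cdot}\big)$ identified isometrically with itself via Riesz representation, $\tnorm{\cdot}$ is exactly the functional norm appearing in the hypothesis and conclusion of the cited lemma. Second, the associated inf-sup constant is, by definition \cref{eq:inf-sup-xi}, precisely $\beta_X$, which is strictly positive. The cited lemma then furnishes a bounded linear right inverse $\bdd{L}_X : \image \bdd{\Xi}_X \to \tilde{\mathbb{W}}_{\Gamma} \times \mathbbb{G}_{\Gamma}$ with $\bdd{\Xi}_X \bdd{L}_X = \mathrm{id}$ on $\image \bdd{\Xi}_X$ and $\|\bdd{L}_X \bdd{\eta}\|_1 \le \beta_X^{-1}\tnorm{\bdd{\eta}}$.

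For a more self-contained route I would argue directly. Since $\bdd{\Xi}_X$ is surjective onto $\image \bdd{\Xi}_X$ with kernel $\Kernel \bdd{\Xi}_X$, choosing a complement $Z$ of $\Kernel \bdd{\Xi}_X$ in $\tilde{\mathbb{W}}_{\Gamma} \times \mathbbb{G}_{\Gamma}$ makes the restriction $\bdd{\Xi}_X|_{Z} : Z \to \image \bdd{\Xi}_X$ a linear bijection; I set $\bdd{L}_X := (\bdd{\Xi}_X|_{Z})^{-1}$, which is linear and a right inverse by construction. For the bound, fix $\bdd{0} \neq \bdd{\eta} \in \image \bdd{\Xi}_X$ and put $(\tilde{v},\bdd{\psi}) := \bdd{L}_X\bdd{\eta}$, so $\bdd{\Xi}_X(\tilde{v},\bdd{\psi}) = \bdd{\eta}$. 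The inf-sup condition \cref{eq:inf-sup-xi} is equivalent, in this finite-dimensional setting, to the transposed bound $\beta_X\, \mathrm{dist}\big((\tilde{v},\bdd{\psi}),\Kernel \bdd{\Xi}_X\big) \le \tnorm{\bdd{\Xi}_X(\tilde{v},\bdd{\psi})}$, and since $\mathrm{dist}\big((\tilde{v},\bdd{\psi}),\Kernel \bdd{\Xi}_X\big)$ is exactly the minimal $\|\cdot\|_1$-norm among all preimages of $\bdd{\eta}$, one concludes $\|\bdd{L}_X\bdd{\eta}\|_1 \le \beta_X^{-1}\tnorm{\bdd{\eta}}$ provided $Z$ (equivalently, the lift $\bdd{L}_X$) is chosen to realize that minimal norm. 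The identity $\sup_{\bdd{0}\neq\bdd{\eta}'\in\image\bdd{\Xi}_X}\dinner{\bdd{\Xi}_X(\tilde{v},\bdd{\psi})}{\bdd{\eta}'}/\tnorm{\bdd{\eta}'} = \tnorm{\bdd{\Xi}_X(\tilde{v},\bdd{\psi})}$ used above is again Riesz representation in $\big(\image \bdd{\Xi}_X, \dinner{\cdot}{\cdot}\big)$.

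I do not anticipate a substantive obstacle, since every space in play is finite dimensional and the result is a textbook consequence of the inf-sup condition. The one place that merits care is the passage from \cref{eq:inf-sup-xi}, in which $\bdd{\eta}$ sits in the ``outer'' position, to a bound controlling $\|(\tilde{v},\bdd{\psi})\|_1$ by $\tnorm{\bdd{\Xi}_X(\tilde{v},\bdd{\psi})}$ for $(\tilde{v},\bdd{\psi})$ transverse to $\Kernel \bdd{\Xi}_X$, together with the bookkeeping needed to keep the constant exactly equal to $\beta_X^{-1}$ rather than a harmless multiple of it. This is precisely the content of \cite[Lemma 12.5.10]{Brenner08} (the minimum-modulus/closed-range identity $\gamma(\bdd{\Xi}_X) = \gamma(\bdd{\Xi}_X^{*}) = \beta_X$ in finite dimensions), so invoking that lemma settles the matter at once.
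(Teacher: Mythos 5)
Your proposal is correct and follows the paper's approach exactly: the paper also presents this lemma as a direct consequence of \cite[Lemma 12.5.10]{Brenner08} without further elaboration. The additional self-contained argument you sketch, via the equivalence in finite dimensions of the inf-sup constant with the minimum modulus of the adjoint, is a sound and standard justification for the citation and correctly identifies the one nontrivial step.
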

Next, we examine the inclusion $\ker \bdd{\Xi}_X \subset \tilde{\mathbb{W}}_{\Gamma} \times \mathbbb{G}_{\Gamma}$ where $\ker \bdd{\Xi}_X$ is defined in \cref{eq:ker-xix}. If the bilinear forms $a(\cdot,\cdot)$ and $c(\cdot,\cdot)$ are symmetric positive definite, then one can define the orthogonal complement of $\ker \bdd{\Xi}_X$ in $\tilde{\mathbb{W}}_{\Gamma} \times \mathbbb{G}_{\Gamma}$ in the usual way. The next result extends this notion to more general bilinear forms $a(\cdot,\cdot)$ and $c(\cdot,\cdot)$ satisfying conditions \cref{eq:ac-bilinear-bounded}. 

\begin{lemma}
	\label{lem:ker-xi-ker-xi-perp-decomp}
	The space $\tilde{\mathbb{W}}_{\Gamma} \times \mathbbb{G}_{\Gamma}$ admits the following decomposition:
	\begin{align}
		\label{eq:ker-xi-perp-decomp}
		\tilde{\mathbb{W}}_{\Gamma} \times \mathbbb{G}_{\Gamma} = \mathbb{K}^{\circ} \oplus \Kernel \bdd{\Xi}_X,
	\end{align}
	where 
	\begin{align*}
		\mathbb{K}^{\circ} := \{ (\tilde{w}, \bdd{\gamma}) \in \tilde{\mathbb{W}}_{\Gamma} \times \mathbbb{G}_{\Gamma} : a(\bdd{\gamma}, \bdd{\psi}) + c(\tilde{w}, \tilde{v}) = 0 \ \forall (\tilde{v}, \bdd{\psi}) \in \Kernel \bdd{\Xi}_X \}.
	\end{align*}
	In particular, for every $(\tilde{w}, \bdd{\gamma}) \in \tilde{\mathbb{W}}_{\Gamma} \times \mathbbb{G}_{\Gamma}$, there exists unique $(\tilde{v}, \bdd{\psi}) \in \mathbb{K}^{\circ}$ and $(\tilde{n}, \bdd{\eta}) \in \Kernel \bdd{\Xi}_X$ satisfying $\tilde{w} = \tilde{v} + \tilde{n}$ and $\bdd{\gamma} = \bdd{\psi} + \bdd{\eta}$, with
	\begin{subequations}
		\begin{align}
			\label{eq:ker-xi-perp-decomp-cont}
			\| \tilde{v} \|_1 + \|\bdd{\psi}\|_1 &\leq \frac{1}{\beta_X} \left(  1 + \frac{2 M}{\alpha_X}  \right)  \tnorm{\bdd{\Xi}_X(\tilde{w}, \bdd{\gamma})}, \\
			\label{eq:ker-xi-decomp-cont}
			\| \tilde{n} \|_1 + \|\bdd{\eta}\|_1 &\leq  \| \tilde{w} \|_1 + \| \bdd{\gamma} \|_1 + \frac{1}{\beta_X} \left(  1 + \frac{2 M}{\alpha_X}  \right)  \tnorm{\bdd{\Xi}_X(\tilde{w}, \bdd{\gamma})}.
		\end{align}
	\end{subequations}
\end{lemma}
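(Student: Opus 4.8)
The plan is to introduce the bilinear form $\mathcal{B}\big((\tilde{w},\bdd{\gamma});(\tilde{v},\bdd{\psi})\big) := a(\bdd{\gamma},\bdd{\psi}) + c(\tilde{w},\tilde{v})$ on $(\tilde{\mathbb{W}}_{\Gamma}\times\mathbbb{G}_{\Gamma})^2$, to recognize $\mathbb{K}^{\circ}$ as the ``$\mathcal{B}$-orthogonal complement'' of $\Kernel\bdd{\Xi}_X$, and to build the decomposition \cref{eq:ker-xi-perp-decomp} by first hitting the prescribed image of $\bdd{\Xi}_X$ with the bounded right inverse $\bdd{L}_X$ of \cref{lem:invert-xi-inf-sup} and then correcting by an element of $\Kernel\bdd{\Xi}_X$.

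First I would record two facts about $\mathcal{B}$. It is bounded on $\tilde{\mathbb{W}}_{\Gamma}\times\mathbbb{G}_{\Gamma}$ with constant $M$: this follows from \cref{eq:ac-bilinear-bounded} together with the elementary inequality $ab+cd\le(a+c)(b+d)$ for $a,b,c,d\ge 0$. More importantly, $\mathcal{B}$ satisfies an inf-sup condition on $\Kernel\bdd{\Xi}_X$: by \cref{lem:ker-xix} every element of $\Kernel\bdd{\Xi}_X$ has the form $(u,\grad u)$ with $u\in\mathbb{W}_{\Gamma}$, and $\mathcal{B}\big((u,\grad u);(v,\grad v)\big)=B(u,v)$ by \cref{eq:b-bilinear-splitting}, so that \ref{hp:b-bilinear-infsup} combined with the norm equivalence $\|u\|_2\le\|u\|_1+\|\grad u\|_1\le 2\|u\|_2$ valid for $u\in H^2_{\Gamma}(\Omega)$ furnishes a nonzero inf-sup constant proportional to $\alpha_X$. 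Since $\Kernel\bdd{\Xi}_X$ is finite dimensional, this forces $\mathcal{B}$ restricted to $\Kernel\bdd{\Xi}_X\times\Kernel\bdd{\Xi}_X$ to be invertible.

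Next I would construct the splitting. Fix $(\tilde{w},\bdd{\gamma})\in\tilde{\mathbb{W}}_{\Gamma}\times\mathbbb{G}_{\Gamma}$ and set $\bdd{g}:=\bdd{\Xi}_X(\tilde{w},\bdd{\gamma})\in\image\bdd{\Xi}_X$. Let $(\tilde{z},\bdd{\chi}):=\bdd{L}_X\bdd{g}$, so that $\bdd{\Xi}_X(\tilde{z},\bdd{\chi})=\bdd{g}$, $\|\tilde{z}\|_1+\|\bdd{\chi}\|_1\le\beta_X^{-1}\tnorm{\bdd{g}}$, and $(\tilde{w},\bdd{\gamma})-(\tilde{z},\bdd{\chi})\in\Kernel\bdd{\Xi}_X$. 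Using the invertibility just established, let $(\tilde{n}_0,\bdd{\eta}_0)\in\Kernel\bdd{\Xi}_X$ be the unique solution of $\mathcal{B}\big((\tilde{n}_0,\bdd{\eta}_0);(\tilde{t},\bdd{\tau})\big)=-\mathcal{B}\big((\tilde{z},\bdd{\chi});(\tilde{t},\bdd{\tau})\big)$ for all $(\tilde{t},\bdd{\tau})\in\Kernel\bdd{\Xi}_X$, and put $(\tilde{v},\bdd{\psi}):=(\tilde{z},\bdd{\chi})+(\tilde{n}_0,\bdd{\eta}_0)$. By construction $(\tilde{v},\bdd{\psi})\in\mathbb{K}^{\circ}$, while $(\tilde{n},\bdd{\eta}):=(\tilde{w},\bdd{\gamma})-(\tilde{v},\bdd{\psi})=\big[(\tilde{w},\bdd{\gamma})-(\tilde{z},\bdd{\chi})\big]-(\tilde{n}_0,\bdd{\eta}_0)\in\Kernel\bdd{\Xi}_X$; hence $\tilde{\mathbb{W}}_{\Gamma}\times\mathbbb{G}_{\Gamma}=\mathbb{K}^{\circ}+\Kernel\bdd{\Xi}_X$. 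The sum is direct since any element of $\mathbb{K}^{\circ}\cap\Kernel\bdd{\Xi}_X$ is $\mathcal{B}$-orthogonal to all of $\Kernel\bdd{\Xi}_X$ while itself lying in $\Kernel\bdd{\Xi}_X$, and so vanishes by the inf-sup condition; this also yields uniqueness of the components of $(\tilde{w},\bdd{\gamma})$.

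Finally I would extract the stability estimates. The inf-sup condition on $\Kernel\bdd{\Xi}_X$, the defining relation for $(\tilde{n}_0,\bdd{\eta}_0)$, and the boundedness of $\mathcal{B}$ combine to bound $\|\tilde{n}_0\|_1+\|\bdd{\eta}_0\|_1$ by a multiple of $\|\tilde{z}\|_1+\|\bdd{\chi}\|_1$; tracking the norm equivalence noted above pins the multiple down and, together with $\|\tilde{z}\|_1+\|\bdd{\chi}\|_1\le\beta_X^{-1}\tnorm{\bdd{g}}$ and $\|\tilde{v}\|_1+\|\bdd{\psi}\|_1\le(\|\tilde{z}\|_1+\|\bdd{\chi}\|_1)+(\|\tilde{n}_0\|_1+\|\bdd{\eta}_0\|_1)$, gives \cref{eq:ker-xi-perp-decomp-cont}; estimate \cref{eq:ker-xi-decomp-cont} then follows from $(\tilde{n},\bdd{\eta})=(\tilde{w},\bdd{\gamma})-(\tilde{v},\bdd{\psi})$ and the triangle inequality. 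I expect the main obstacle to be the inf-sup step: correctly transferring \ref{hp:b-bilinear-infsup}, which is phrased in the $H^2$-norm over the abstract space $\mathbb{W}_{\Gamma}$, into an inf-sup condition for $\mathcal{B}$ measured in the product $H^1$-norm on $\Kernel\bdd{\Xi}_X$ -- this is where \cref{lem:ker-xix} and the equivalence $\|v\|_2\simeq\|v\|_1+\|\grad v\|_1$ on $H^2_{\Gamma}(\Omega)$ do the work and where the explicit constants in \cref{eq:ker-xi-perp-decomp-cont,eq:ker-xi-decomp-cont} are fixed. The remaining ``$\mathcal{B}$-orthogonal decomposition'' argument is routine and, notably, never uses symmetry or positivity of $a(\cdot,\cdot)$ or $c(\cdot,\cdot)$.
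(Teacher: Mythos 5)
Your proposal is correct and follows essentially the same route as the paper: apply the bounded right inverse $\bdd{L}_X$ from \cref{lem:invert-xi-inf-sup} to peel off a preimage of $\bdd{\Xi}_X(\tilde{w},\bdd{\gamma})$, then correct by the unique element of $\Kernel\bdd{\Xi}_X$ determined by the inf-sup condition on the restricted bilinear form. The paper realizes this correction concretely as $(z,\grad z)$ with $z\in\mathbb{W}_{\Gamma}$ solving the conforming problem $B(z,v)=a(\bdd{\gamma},\grad v)+c(\tilde{u},v)$, which coincides with your abstract variational problem for $(\tilde{n}_0,\bdd{\eta}_0)$ via the identification of \cref{lem:ker-xix}.
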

\begin{proof}
	Let $(\tilde{w}, \bdd{\gamma}) \in \tilde{\mathbb{W}}_{\Gamma} \times \mathbbb{G}_{\Gamma}$ be given. Thanks to \cref{lem:invert-xi-inf-sup}, there exist $(\tilde{u}, \bdd{\gamma}) \in \tilde{\mathbb{W}}_{\Gamma} \times \mathbbb{G}_{\Gamma}$ satisfying
	\begin{align*}
		\bdd{\Xi}_X(\tilde{u}, \bdd{\gamma}) = \bdd{\Xi}_X(\tilde{w}, \bdd{\gamma}) \quad \text{and} \quad \| \tilde{u} \|_1 + \|\bdd{\gamma}\|_1 \leq \beta_X^{-1} \tnorm{ \bdd{\Xi}_X(\tilde{w}, \bdd{\gamma}) }.
	\end{align*}
	The well-posedness of \cref{eq:proof:h2gamma-gen-problem} then gives the existence of $z \in \mathbb{W}_{\Gamma}$ such that
	\begin{align*}
		B(z, v) = a(\bdd{\gamma}, \grad v) + c(\tilde{u}, v) \qquad \forall v \in \mathbb{W}_{\Gamma}
	\end{align*} 
	and
	\begin{align*}
		\|z\|_2 \leq 2 M \alpha_X^{-1} \left( \|\tilde{u}\|_1 + \|\bdd{\gamma}\|_1 \right).
	\end{align*}
	Thanks to \cref{eq:ker-xix}, $(\tilde{v}, \bdd{\psi}) := (\tilde{u} - z, \bdd{\gamma} - \grad z) \in \mathbb{K}^{\circ}$ satisfies $\bdd{\Xi}_X(\tilde{v}, \bdd{\psi}) = \bdd{\Xi}_X(\tilde{w}, \bdd{\gamma})$ and
	\begin{align*}
		\| \tilde{v} \|_1 + \| \bdd{\psi} \|_1 \leq \frac{1}{\beta_X} \left( 1 + \frac{2 M }{\alpha_X}  \right) \tnorm{ \bdd{\Xi}_X(\tilde{w}, \bdd{\gamma}) }.
	\end{align*}
	Choosing $\tilde{n} = \tilde{w} - \tilde{v}$ and $\bdd{\eta} = \bdd{\gamma} - \bdd{\psi}$ gives $(\tilde{n}, \bdd{\eta}) \in \Kernel \bdd{\Xi}_X$ satisfying \cref{eq:ker-xi-decomp-cont}.
\end{proof}

Next, we give a stability result for the $A_{\lambda}(\cdot,\cdot)$ bilinear form:
\begin{lemma}
	\label{lem:ip-iterates-defined}
	Suppose $\lambda > \lambda_0$, where $\lambda_0$ is defined in \cref{eq:lambda0-cond}. Let $(\tilde{w}, \bdd{\gamma}) \in \tilde{\mathbb{W}}_{\Gamma} \times \mathbbb{G}_{\Gamma}$ be decomposed according to \cref{eq:ker-xi-perp-decomp}: i.e. $\tilde{w} = w + \tilde{u}$ and $\bdd{\gamma} = \grad w + \bdd{\phi}$ with $w \in \mathbb{W}_{\Gamma}$ and $(\tilde{u}, \bdd{\phi}) \in \mathbb{K}^{\circ}$. Then, there exists $(\tilde{v}, \bdd{\psi}) \in \mathbb{W}_{\Gamma} \times \mathbbb{G}_{\Gamma}$ such that 
	\begin{align}
		\label{eq:Alam-lower-bound}
		\frac{8}{9} \left( \frac{\lambda_0}{\lambda + \lambda_0 } \right) \|w\|_{2}^2 +  \left( \| \tilde{u} \|_1 + \| \bdd{\phi}\|_1 \right)^2 \leq \frac{8}{9} \left(  \frac{\lambda_0}{\lambda - \lambda_0} \right) A_{\lambda}(\tilde{w}, \bdd{\gamma}; \tilde{v}, \bdd{\psi}),
	\end{align}
	and
	\begin{align}
		\label{eq:Alam-lower-bound-cont-v-psi}
		\|\tilde{v}\|_1 + \|\bdd{\psi}\|_1 \leq 2 \alpha_X^{-1} \|w\| + M^{-1} \left( \|\tilde{u}\|_1 + \|\bdd{\phi}\|_{1} \right),
	\end{align}
	where $A_{\lambda}(\cdot,\cdot;\cdot,\cdot)$ is defined in \cref{eq:Alambda-def}.

	Consequently, for $\lambda > \lambda_0$ and $G \in (\tilde{\mathbb{W}}_{\Gamma} \times \mathbbb{G}_{\Gamma})^*$, there exists a unique solution to the problem: Find $(\tilde{w}, \bdd{\gamma}) \in \tilde{\mathbb{W}}_{\Gamma} \times \mathbbb{G}_{\Gamma}$ such that
	\begin{align}
		\label{eq:Alam-gen-eq}
		A_{\lambda}(\tilde{w}, \bdd{\gamma}; \tilde{v}, \bdd{\psi})  &= G(\tilde{v}, \bdd{\psi}) \qquad \forall (\tilde{v}, \bdd{\psi}) \in \tilde{\mathbb{W}}_{\Gamma} \times \mathbbb{G}_{\Gamma}.
	\end{align}
\end{lemma}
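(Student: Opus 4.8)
The plan is to establish the coercivity-type bound \cref{eq:Alam-lower-bound} by producing, for each $(\tilde w, \bdd{\gamma}) \in \tilde{\mathbb{W}}_{\Gamma} \times \mathbbb{G}_{\Gamma}$, an explicit test pair $(\tilde v, \bdd{\psi}) \in \tilde{\mathbb{W}}_{\Gamma} \times \mathbbb{G}_{\Gamma}$ adapted to the splitting $\tilde w = w + \tilde u$, $\bdd{\gamma} = \grad w + \bdd{\phi}$ from \cref{eq:ker-xi-perp-decomp}, and then to read off the well-posedness of \cref{eq:Alam-gen-eq} as an immediate consequence. The first observation is the identity $\bdd{\Xi}_X(\tilde w, \bdd{\gamma}) = \bdd{\Xi}_X(\tilde u, \bdd{\phi})$, which holds because $\bdd{\Xi}_X$ annihilates $(w, \grad w) \in \Kernel \bdd{\Xi}_X$; combining this with \cref{eq:ker-xi-perp-decomp-cont} of \cref{lem:ker-xi-ker-xi-perp-decomp} applied to $(\tilde w,\bdd\gamma)$ gives the comparison $\tfrac{\beta_X}{1 + 2M/\alpha_X}(\|\tilde u\|_1 + \|\bdd{\phi}\|_1) \le \tnorm{\bdd{\Xi}_X(\tilde u, \bdd{\phi})}$, which is what lets the penalty term of $A_\lambda$ control $\|\tilde u\|_1 + \|\bdd{\phi}\|_1$.

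For the test pair I would take $(\tilde v, \bdd{\psi}) := (z + M^{-1}\tilde u,\ \grad z + M^{-1}\bdd{\phi})$, where $z \in \mathbb{W}_{\Gamma}$ is extracted from the discrete inf-sup condition \ref{hp:b-bilinear-infsup} and normalised so that $B(w, z) \ge 2\|w\|_2^2$ and $\|z\|_2 \le 2\alpha_X^{-1}\|w\|_2$; the bound \cref{eq:Alam-lower-bound-cont-v-psi} then follows at once from the triangle inequality. Expanding $A_\lambda(\tilde w, \bdd{\gamma}; \tilde v, \bdd{\psi})$ with $\tilde w = w + \tilde u$, $\bdd{\gamma} = \grad w + \bdd{\phi}$ and the structural splitting \cref{eq:b-bilinear-splitting}, four groups of terms appear: (i) the kernel--kernel part $a(\grad w, \grad z) + c(w, z) = B(w, z) \ge 2\|w\|_2^2$; (ii) the cross term $a(\bdd{\phi}, \grad z) + c(\tilde u, z)$, which vanishes exactly because $(\tilde u, \bdd{\phi}) \in \mathbb{K}^{\circ}$ and $(z, \grad z) \in \Kernel \bdd{\Xi}_X$; (iii) the remaining mixed terms $M^{-1}[a(\grad w, \bdd{\phi}) + c(w, \tilde u)] + M^{-1}[a(\bdd{\phi}, \bdd{\phi}) + c(\tilde u, \tilde u)]$, bounded below using only the continuity \cref{eq:ac-bilinear-bounded} by $-\|w\|_2(\|\tilde u\|_1 + \|\bdd{\phi}\|_1) - (\|\tilde u\|_1 + \|\bdd{\phi}\|_1)^2$; and (iv) the penalty term, which collapses to $\lambda M^{-1}\tnorm{\bdd{\Xi}_X(\tilde u, \bdd{\phi})}^2$ since $\bdd{\Xi}_X(\tilde v, \bdd{\psi}) = M^{-1}\bdd{\Xi}_X(\tilde u, \bdd{\phi})$.

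Writing $s := \|w\|_2$ and $t := \|\tilde u\|_1 + \|\bdd{\phi}\|_1$, assembling (i)--(iv) gives $A_\lambda(\tilde w, \bdd{\gamma}; \tilde v, \bdd{\psi}) \ge 2s^2 - st - t^2 + \lambda M^{-1}\tnorm{\bdd{\Xi}_X(\tilde u, \bdd{\phi})}^2$. I would then insert the comparison from the first paragraph together with the definition of $\lambda_0$ in \cref{eq:lambda0-cond} (which is arranged precisely so that $\beta_X^2/(M(1+2M/\alpha_X)^2) = 5/(4\lambda_0)$) to obtain $\lambda M^{-1}\tnorm{\bdd{\Xi}_X(\tilde u, \bdd{\phi})}^2 \ge \tfrac{5\lambda}{4\lambda_0} t^2$, and finally apply $st \le s^2 + \tfrac14 t^2$ to reach $A_\lambda(\tilde w, \bdd{\gamma}; \tilde v, \bdd{\psi}) \ge s^2 + \tfrac{5(\lambda - \lambda_0)}{4\lambda_0} t^2$. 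For $\lambda > \lambda_0$ one has $\tfrac{\lambda-\lambda_0}{\lambda+\lambda_0} \le 1$ and $\tfrac54 \ge \tfrac98$, so this lower bound dominates $\tfrac{\lambda-\lambda_0}{\lambda+\lambda_0}s^2 + \tfrac98\tfrac{\lambda-\lambda_0}{\lambda_0}t^2 = \tfrac98\tfrac{\lambda-\lambda_0}{\lambda_0}\big(\tfrac89\tfrac{\lambda_0}{\lambda+\lambda_0}s^2 + t^2\big)$, and multiplying through by $\tfrac89\tfrac{\lambda_0}{\lambda-\lambda_0}$ yields exactly \cref{eq:Alam-lower-bound}. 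The last assertion is then a one-liner: \cref{eq:Alam-gen-eq} is a square finite-dimensional system, so it suffices to check injectivity, and if $A_\lambda(\tilde w, \bdd{\gamma}; \cdot, \cdot) \equiv 0$ then inserting the constructed $(\tilde v, \bdd{\psi})$ into \cref{eq:Alam-lower-bound} forces $s = t = 0$, hence $(\tilde w, \bdd{\gamma}) = (0, \bdd{0})$.

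\textbf{Main obstacle.} The delicate point, as the remark preceding the lemma flags, is that $a(\cdot,\cdot)$ and $c(\cdot,\cdot)$ are neither symmetric nor coercive, so the naive choice $(\tilde v, \bdd{\psi}) = (\tilde w, \bdd{\gamma})$ leaves the uncontrolled quadratic form $a(\bdd{\phi}, \bdd{\phi}) + c(\tilde u, \tilde u)$; the positivity must instead be manufactured by (a) the $B$-inf-sup element $z$ supplying the strictly positive term $2\|w\|_2^2$ on the kernel, (b) the asymmetry in the definition of $\mathbb{K}^{\circ}$ — it annihilates $a(\bdd{\phi}, \grad z) + c(\tilde u, z)$ but not its transpose, so it is essential that $z$ sits in the test slot — and (c) taking $\lambda$ past the threshold $\lambda_0$ so that the penalty term absorbs the indefinite couplings $-st - t^2$. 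Making the constants $\tfrac89$, $\tfrac54$, and $\lambda_0 = \tfrac{5M}{4\beta_X^2}(1 + 2M/\alpha_X)^2$ line up is routine Young-inequality bookkeeping, but the budget is tight and the Young constant used at each step must be chosen deliberately.
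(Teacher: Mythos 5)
Your proof follows essentially the same route as the paper's: decompose via \cref{lem:ker-xi-ker-xi-perp-decomp}, build the test pair as a linear combination of a kernel element (controlling $\|w\|_2$) and $M^{-1}$ times the $\mathbb{K}^{\circ}$ component, exploit the one-sided annihilation $a(\bdd{\phi},\grad z) + c(\tilde u, z) = 0$, bound the penalty term below via the decomposition estimate and the definition of $\lambda_0$, and close with Young's inequality. The identities and bounds in your parts (i)--(iv), the comparison $\tfrac{\beta_X}{1+2M/\alpha_X}t \le \tnorm{\bdd{\Xi}_X(\tilde u,\bdd{\phi})}$, and the final manipulation to reach \cref{eq:Alam-lower-bound} are all correct, and your remark about why the naive test pair fails and why $z$ must sit in the test slot is exactly the right diagnosis.

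There is however one genuine, if small, gap: your normalization of $z$ — chosen so that $B(w,z) \ge 2\|w\|_2^2$ and $\|z\|_2 \le 2\alpha_X^{-1}\|w\|_2$ — does not deliver \cref{eq:Alam-lower-bound-cont-v-psi}. From $\tilde v = z + M^{-1}\tilde u$, $\bdd{\psi} = \grad z + M^{-1}\bdd{\phi}$ one has $\|\tilde v\|_1 + \|\bdd{\psi}\|_1 \le \|z\|_1 + \|\grad z\|_1 + M^{-1}t$, and $\|z\|_1 + \|\grad z\|_1 \le 2\|z\|_2 \le 4\alpha_X^{-1}\|w\|_2$, which is twice the stated constant, so the "follows at once from the triangle inequality" claim is not quite right. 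The paper avoids this by producing its kernel element through the adjoint problem $B(z,v) = (w,z)_2$ for all $z \in \mathbb{W}_{\Gamma}$, which gives $B(w,v) = \|w\|_2^2$ \emph{exactly} with $\|v\|_2 \le \alpha_X^{-1}\|w\|_2$ — but then the fixed Young split $st \le s^2 + \tfrac14 t^2$ no longer suffices, and one must instead take a $\lambda$-dependent parameter, namely $st \le \tfrac{1}{4\delta}s^2 + \delta t^2$ with $\delta = \tfrac18(1 + \lambda/\lambda_0)$. Your factor-of-2 cushion in $B(w,z)$ is precisely what let you get away with a fixed Young constant, so you traded a cleaner Young step for a worse constant in \cref{eq:Alam-lower-bound-cont-v-psi}. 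To repair the proof, either keep the adjoint problem as in the paper and switch to the $\lambda$-dependent Young parameter, or else relabel the claim with $4\alpha_X^{-1}$; the lemma's consequence for unique solvability of \cref{eq:Alam-gen-eq} is unaffected either way, since that only uses \cref{eq:Alam-lower-bound}.
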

\begin{proof}
	Let $\lambda > \lambda_0$, $(\tilde{w}, \bdd{\gamma}) \in \tilde{\mathbb{W}}_{\Gamma} \times \mathbbb{G}_{\Gamma}$, $w \in \mathbb{W}_{\Gamma}$, and $(\tilde{u}, \bdd{\phi}) \in \mathbb{K}^{\circ}$ be as in the statement of the lemma. The well-posedness of \cref{eq:h2-bilinear-fem} means that there exists a unique solution $v \in \mathbb{W}_{\Gamma}$ of the adjoint problem 
	\begin{align*}
		B(z, v) = (w, z)_2 \qquad \forall z \in \mathbb{W}_{\Gamma}
	\end{align*}
	that depends continuously on the data $\|v\|_2 \leq \alpha_X^{-1} \|w\|_2$, where $\alpha_X$ is the inf-sup constant defined in \cref{eq:b-bilinear-infsup-discrete-gen}. Let $(\tilde{v}, \bdd{\psi})$ be given by
	\begin{align}
		\label{eq:Alam-tildev-psi-choice}
		\tilde{\mathbb{W}}_{\Gamma} \times \mathbbb{G}_{\Gamma} \ni (\tilde{v}, \bdd{\psi}) = (v, \grad v) + M^{-1} (\tilde{u}, \bdd{\phi}).
	\end{align}
	Then, \cref{eq:Alam-lower-bound-cont-v-psi} holds and direct computation gives
	\begin{align*}
		A_{\lambda}(\tilde{w}, \bdd{\gamma}; \tilde{v}, \bdd{\psi}) &= a(\grad w, \grad v) + c(w, v) + \epsilon \lambda \tnorm {\bdd{\Xi}_X(\tilde{u}, \bdd{\phi})}^2 \\
		&\qquad + a(\bdd{\phi}, \grad v) + c(\tilde{u}, v) +  \epsilon(  a(\grad w, \bdd{\phi}) + c(w, \tilde{u}) ) \\
		&\qquad + \epsilon ( a(\bdd{\phi}, \bdd{\phi}) + c(\tilde{u}, \tilde{u}) ) \\
		&= \| w \|_2^2 + \epsilon \lambda \tnorm{ \bdd{\Xi}_X(\tilde{u}, \bdd{\phi}) }^2 + \epsilon(  a(\grad w, \bdd{\phi}) + c(w, \tilde{u}) ) \\
		&\qquad + \epsilon (a(\bdd{\phi}, \bdd{\phi}) + c(\tilde{u}, \tilde{u}) )
	\end{align*}
	with $\epsilon := M^{-1}$. Applying \cref{eq:ker-xi-perp-decomp-cont} gives
	\begin{align*}
		A_{\lambda}(\tilde{w}, \bdd{\gamma}; \tilde{v}, \bdd{\psi}) &\geq \| w \|_2^2 + \frac{\lambda  \beta_X^2}{ M }  \left( 1 + \frac{2 M}{\alpha_X}  \right)^{-2} \left( \| \tilde{u} \|_1 + \| \bdd{\phi}\|_1 \right)^2 \\
		&\qquad - \|w\|_1 (\|\bdd{\phi}\|_1 + \|\tilde{u}\|_1) - \|\bdd{\phi}\|_1^2 - \|\tilde{u}\|^2 \\
		&\geq \left( 1 - \frac{1}{4 \delta} \right) \| w \|_2^2 + \frac{5}{4} \left( \frac{\lambda}{\lambda_0} - \frac{4}{5}(1+\delta) \right) \left( \| \tilde{u} \|_1 + \| \bdd{\phi}\|_1 \right)^2
	\end{align*}
	for any $\delta > 0$. Inequality \cref{eq:Alam-lower-bound} now follows on choosing $\delta = \frac{1}{8}(1 + \frac{\lambda}{\lambda_0})$.
	
	In order to show uniqueness of solutions to \cref{eq:Alam-gen-eq}, suppose that $(\tilde{w}, \bdd{\gamma}) \in \tilde{\mathbb{W}}_{\Gamma} \times \mathbbb{G}_{\Gamma}$ satisfies $A_{\lambda}(\tilde{w}, \bdd{\gamma}; \tilde{v}, \bdd{\psi}) = 0$ for all $(\tilde{v}, \bdd{\psi}) \in \tilde{\mathbb{W}}_{\Gamma} \times \mathbbb{G}_{\Gamma}$. Then \cref{eq:Alam-lower-bound} implies $(\tilde{w}, \bdd{\gamma}) = (0, \bdd{0})$ and uniqueness follows.
\end{proof}

\subsection{Proof of \cref{thm:iter-penalty-gen}}

Let $\tilde{\epsilon}^n := \tilde{w}_X - \tilde{w}^n$, $\bdd{e}^n := \bdd{\gamma}_X - \bdd{\gamma}^n$, and $\bdd{r}^n := \bdd{q}_X - \bdd{\Xi}_X(\tilde{u}^n, \bdd{\phi}^n)$. Subtracting \cref{eq:iter-penalty-morgan-scott-gen-1} from \cref{eq:h2-mixed-problem-fem-any-inner-1} and using that  $\bdd{\Xi}_X(\tilde{w}_X, \bdd{\gamma}_X) \equiv \bdd{0}$ by \cref{eq:h2-mixed-problem-fem-any-inner-2} gives, for $n \in \mathbb{N}_0$,
\begin{align}
	\label{eq:proof:ip-error-n}
	A_{\lambda}(\tilde{\epsilon}^n, \bdd{e}^n; \tilde{v}, \bdd{\psi} ) = -\dinner{ \bdd{r}^n}{\bdd{\Xi}_X(\tilde{v}, \bdd{\psi})} \qquad \forall (\tilde{v}, \bdd{\psi}) \in \tilde{\mathbb{W}}_{\Gamma} \times \mathbbb{G}_{\Gamma}
\end{align}
and $\bdd{r}^{n+1} = \bdd{r}^n - \lambda \bdd{\Xi}_X(\tilde{w}^n, \bdd{\gamma}^n) = \bdd{r}^{n} + \lambda \bdd{\Xi}_X(\tilde{\epsilon}^n, \bdd{e}^n)$. Consequently, we have
\begin{alignat*}{2}
	A_{\lambda}(\tilde{\epsilon}^{n+1}, \bdd{e}^{n+1}; \tilde{v}, \bdd{\psi} ) &= - \dinner{\bdd{r}^n}{\bdd{\Xi}_X(\tilde{v}, \bdd{\psi}))} - \lambda  \dinner{\bdd{\Xi}_X(\tilde{\epsilon}^n, \bdd{e}^n)}{ \bdd{\Xi}_X(\tilde{v}, \bdd{\psi})} \\
	&= A_{\lambda}(\tilde{\epsilon}^{n}, \bdd{e}^{n}; \tilde{v}, \bdd{\psi} ) - \lambda \dinner{ \bdd{\Xi}_X(\tilde{\epsilon}^n, \bdd{e}^n)}{ \bdd{\Xi}_X(\tilde{v}, \bdd{\psi}) } \\
	&= a(\bdd{e}^{n},\bdd{\psi}) + c(\tilde{\epsilon}^{n}, \tilde{v})
\end{alignat*}
for all $(\tilde{v}, \bdd{\psi}) \in \tilde{\mathbb{W}}_{\Gamma} \times \mathbbb{G}_{\Gamma}$. 
Since $(\tilde{\epsilon}^{n+1}, \bdd{e}^{n+1}) \in \mathbb{K}^{\circ}$ by \cref{eq:proof:ip-error-n}, \cref{lem:ip-iterates-defined} and \cref{eq:Alam-tildev-psi-choice} shows that the choice $\tilde{v} = M^{-1} \tilde{\epsilon}^{n+1}$ and $\bdd{\psi} = M^{-1} \bdd{e}^{n+1}$ satisfies
\begin{align*}
	\left( \| \tilde{\epsilon}^{n+1} \|_1 + \| \bdd{e}^{n+1} \|_1 \right)^2 &\leq \frac{8}{9} \left( \frac{\lambda_0}{\lambda - \lambda_0} \right) A_{\lambda}(\tilde{\epsilon}^{n+1}, \bdd{e}^{n+1}; \tilde{v}, \bdd{\psi}) \\
	&=  \frac{8}{9} \left( \frac{\lambda_0}{\lambda - \lambda_0} \right) \left( a(\bdd{e}^{n},\bdd{\psi}) + c(\tilde{\epsilon}^{n}, \tilde{v}) \right) \\
	&\leq \frac{8}{9} \left( \frac{\lambda_0}{\lambda - \lambda_0} \right) \left( \|\bdd{e}^n\|_1 \|\bdd{e}^{n+1}\|_1 + \|\tilde{\epsilon}^n\|_1  \| \tilde{\epsilon}^{n+1} \|_1 \right) \\
	&\leq\frac{8}{9} \left( \frac{\lambda_0}{\lambda - \lambda_0} \right) \left( \| \tilde{\epsilon}^{n} \|_1 + \| \bdd{e}^{n} \|_1 \right) \left( \| \tilde{\epsilon}^{n+1} \|_1 + \| \bdd{e}^{n+1} \|_1 \right). 
\end{align*}
Therefore,
\begin{align*}
	\| \tilde{\epsilon}^{n} \|_1 + \| \bdd{e}^{n} \|_1 \leq \left\{ \frac{8}{9} \left( \frac{\lambda_0}{\lambda - \lambda_0} \right) \right\}^n \left( \| \tilde{\epsilon}^{0} \|_1 + \| \bdd{e}^{0} \|_1 \right),
\end{align*}
and so \cref{eq:iter-penalty-gen-xip} follows from the relations
\begin{align*}
	\tnorm{ \bdd{\Xi}_X(\tilde{w}^n, \bdd{\gamma}^n) } = \tnorm{ \bdd{\Xi}_X(\tilde{\epsilon}^n, \bdd{e}^n) } \leq \Upsilon (\| \tilde{\epsilon}^{n} \|_1 + \| \bdd{e}^{n} \|_1).
\end{align*}

Again using that $(\tilde{\epsilon}^{n}, \bdd{e}^{n}) \in \mathbb{K}^{\circ}$ and applying \cref{eq:ker-xi-perp-decomp-cont} gives \cref{eq:iter-penalty-gen-convergence-1}. Thanks to the inf-sup condition \cref{eq:inf-sup-xi}, there holds
\begin{align*}
	\beta_X \tnorm{ \bdd{r}^n } &\leq \sup_{ \substack{ (\tilde{v}, \bdd{\psi}) \in \tilde{\mathbb{W}}_{\Gamma} \times \mathbbb{G}_{\Gamma} \\ (\tilde{v}, \bdd{\psi}) \neq (0, \bdd{0}) } } \frac{ \dinner{\bdd{\Xi}_X(\tilde{v}, \bdd{\psi})}{ \bdd{r}^n}  }{\|\tilde{v}\|_1 + \|\bdd{\psi}\|_1 } 
	=  \sup_{ \substack{ (\tilde{v}, \bdd{\psi}) \in \tilde{\mathbb{W}}_{\Gamma} \times \mathbbb{G}_{\Gamma} \\ (\tilde{v}, \bdd{\psi}) \neq (0, \bdd{0}) } } \frac{A_{\lambda}(\tilde{\epsilon}^n, \bdd{e}^n; \tilde{v}, \bdd{\psi}) }{\|\tilde{v}\|_1 + \|\bdd{\psi}\|_1 }. 
\end{align*}
Since
\begin{align*}
	A_{\lambda}(\tilde{\epsilon}^n, \bdd{e}^n; \tilde{v}, \bdd{\psi}) 
	&\leq  M \|\bdd{e}^n\|_1 \|\bdd{\psi}\|_1 + M \|\tilde{e}^n\|_1 \|\tilde{v}\|_1 + \lambda \tnorm{ \bdd{\Xi}_X(\tilde{\epsilon}^n, \bdd{e}^n) } \tnorm{ \bdd{\Xi}_X(\tilde{v}, \bdd{\psi}) } \\
	&\leq  \left\{ M (\|\tilde{e}^n\|_1 + \|\bdd{e}^n\|_1) + \Upsilon \lambda \tnorm{ \bdd{\Xi}_X(\tilde{\epsilon}^n, \bdd{e}^n) }  \right\} (\|\tilde{v}\|_1 + \|\bdd{\psi}\|_1) \\
	&\leq \left\{  \frac{M}{\beta_X} \left( 1 + \frac{2 M}{\alpha_X} \right) + \Upsilon\lambda \right\} \tnorm{ \bdd{\Xi}_X(\tilde{\epsilon}^n, \bdd{e}^n) } (\|\tilde{v}\|_1 + \|\bdd{\psi}\|_1).
\end{align*}
Inequality \cref{eq:iter-penalty-gen-convergence-2} now follows. \hfill \qedsymbol

\section{Numerical examples}
\label{sec:numerics}

Firedrake \cite{FiredrakeUserManual,Nixonhill23} is a freely available package that provides a wide variety of finite element schemes. However, it does not offer high order $C^1$-finite element schemes and, as such, it is typical of the types of packages which the current methodology is targeted. In this section, we use Firedrake to compute high order $C^1$-conforming approximations of some representative problems. The code is available at \cite{zenodo/3DC1}.

\subsection{Kirchhoff plate with point load}
\label{sec:kirchhoff-static}

Let $\Omega \times [-\tau/2, \tau/2]$ be an isotropic steel Kirchhoff plate with density $\rho =7820\unit[per-mode=symbol]{\kilogram\per\meter^{3}}$ and thickness $\tau = 10^{-2} \unit{\meter}$, where $\Omega$ is an L-shaped plate with 3 holes as in \cref{fig:kirchhoff-pointload}. The plate is simply-supported on the outer boundary and free on the boundary of the holes.  We apply a constant point-load to the mesh vertex located at $\bdd{z} := (0.66, 0.33)$ and let the plate reach equilibrium. We wish to compute the approximation to the transverse displacement $w_X \in \mathbb{W}_{\Gamma}$ that would be obtained using the Morgan-Scott space \cref{eq:w-c1-spline} of degree $p \geq 2$ as follows:
\begin{align}
	\label{eq:kirchhoff-ex-pointload}
	w_X \in \mathbb{W}_{\Gamma} : \qquad a_{s}(\grad w_X, \grad v) &= 10^3 \cdot v(\bdd{z}) \qquad \forall v \in \mathbb{W}_{\Gamma},
\end{align}
where $a_s(\cdot,\cdot)$ denotes the bilinear form defined in \cref{eq:kirchhoff-a}, the Young's modulus $E = 2.1 \cdot 10^{11} \unit{\pascal}$, the Poisson ration $\nu = 0.3$, and the bending stiffness $D = E \tau^3 / (12 (1-\nu^2))$.

We compute the solution to \cref{eq:kirchhoff-ex-pointload} by solving the mixed problem \cref{eq:h2-mixed-problem-fem} with $\tilde{\mathbb{W}}$ and $\mathbbb{G}$ chosen as in \cref{eq:w-c1-spline-tilde-w-g-choice} with $a(\cdot,\cdot) = a_s(\cdot,\cdot) / D$, $c(\cdot,\cdot) \equiv 0$, $F_1 \equiv 0$, and $F_2(\tilde{v}) = (10^3 / D) \cdot \tilde{v}(\bdd{z})$ using the iterated penalty method \cref{eq:iter-penalty-morgan-scott-gen} with penalty parameter $\lambda = 10^3$ and terminating when $\|\bdd{\Xi}_X(\tilde{w}^n, \bdd{\gamma}^n)\|_{\curl} < 10^{-8}$. The numbers of iterated penalty iterations to achieve convergence were 5 for $p=3$ and 3 for $4 \leq p \leq 15$. The method did not converge in 100 iterations for $p=2$. These results are consistent with the convergence estimate \cref{eq:iter-penalty-gen-xip} given that the inf-sup constant $\beta_X$ \cref{eq:inf-sup-xi} is independent of $p$ as shown in \cref{thm:morgan-scott-inf-sup}. The $p=10$ solution $\tilde{w}_X$ given by the iterated penalty method and its gradient $\grad \tilde{w}_X$, displayed in \cref{fig:kirchhoff-pointload-combo}, show that the upper left corner of the plate displaces in the opposite direction of the load and that $\tilde{w}_X$ is indeed $C^1$.

\begin{figure}[htb]
	\centering
	\begin{subfigure}[b]{0.48\linewidth}
		\centering
		\includegraphics[width=\linewidth]{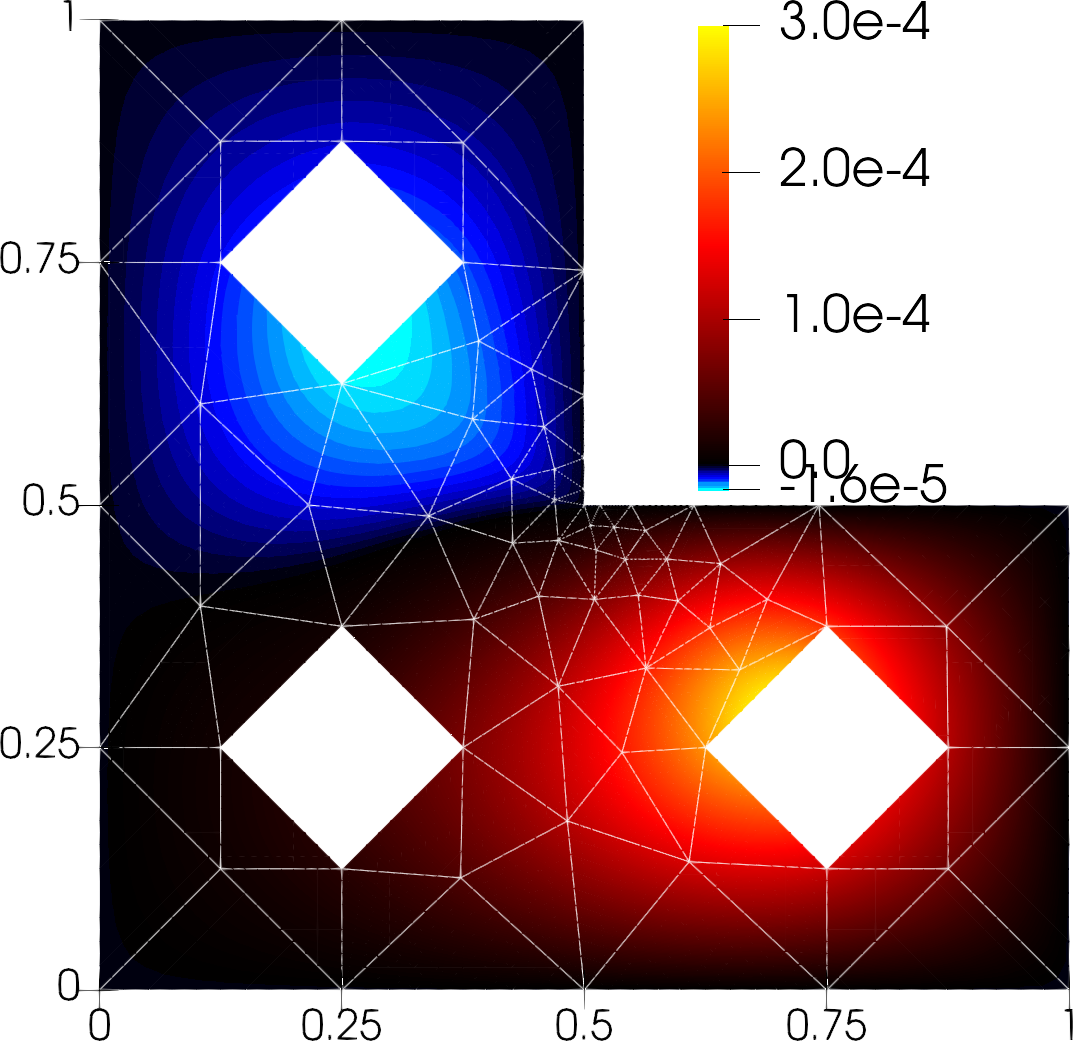}
		\caption{$\tilde{w}_X$}
		\label{fig:kirchhoff-pointload}
	\end{subfigure}
	\hfill
	\begin{subfigure}[b]{0.48\linewidth}
		\centering
		\includegraphics[width=\linewidth]{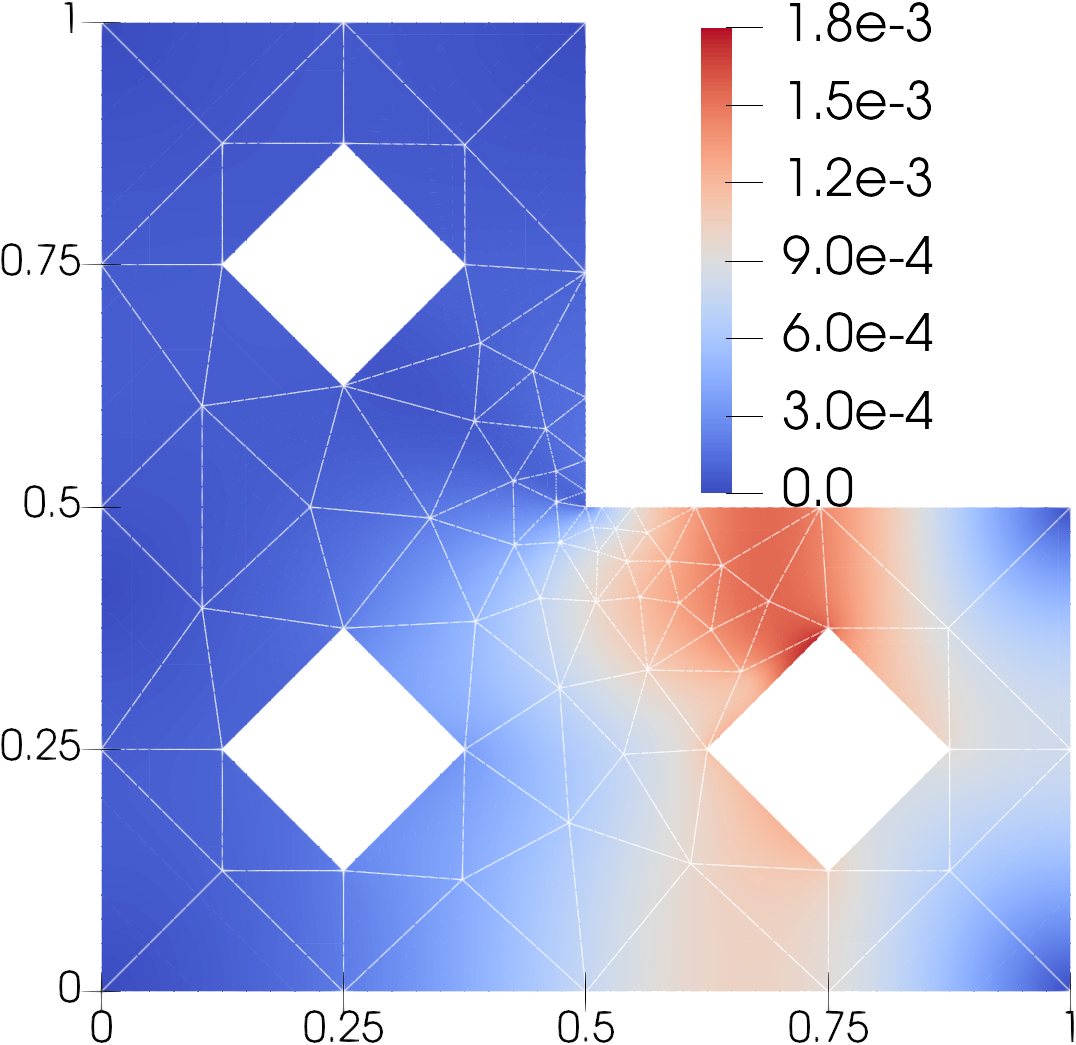}
		\caption{$|\grad \tilde{w}_X|$}
		\label{fig:kirchhoff-pointload-grad}
	\end{subfigure}
	\caption{(a) The displacement $\tilde{w}_X$ and (b) $|\grad \tilde{w}_X|$ for the $p=10$ solution to the Kirchhoff plate problem \cref{eq:kirchhoff-ex-pointload}.}
	\label{fig:kirchhoff-pointload-combo}
\end{figure}

\subsection{Three dimensional example}

Let $\Omega = (0, 1)^3$ be the unit cube and let $\mathbb{W}$ be the $C^1$-spline space \cref{eq:w-c1-spline} defined on a sequence of Freudenthal partitions $\{ \mathcal{T}_m \}$ generated as follows: Given $m \in \mathbb{N}$, subdivide $\Omega$ into $m^3$ congruent subcubes and divide each of these cubes into six tetrahedra (see e.g. \cite{Bey00} for a precise description), so that $\mathcal{T}_m$ is quasi-uniform of size $h := 1/m$. We consider the $H^2$-projection problem
\begin{align}
	\label{eq:3d-projection-problem}
	w_X \in \mathbb{W}_{\Gamma} : \qquad B(w_X, v) &= B(w, v) \qquad \forall v \in \mathbb{W}_{\Gamma},
\end{align}
where $\Gamma_f = \Gamma$, $w = \sin(\pi x) \sin(\pi y) \sin(\pi z)$, and
\begin{align*}
	B(u, v) := (D^2 u, D^2 v) + (\grad u, \grad v) + (u, v) \qquad \forall u, v \in H^2(\Omega).
\end{align*}
Very few finite element packages provide a capability to use $C^1$-conforming elements in the three dimensional setting. Nevertheless, the current approach can be used to obtain the solution to \cref{eq:3d-projection-problem} using Firedrake \cite{FiredrakeUserManual}.

We decompose the bilinear form $B(\cdot,\cdot)$ as in \cref{eq:b-bilinear-splitting} by choosing
\begin{alignat*}{2}
	a(\bdd{\theta}, \bdd{\psi}) &:= (\grad \bdd{\theta}, \grad \bdd{\psi}) \qquad & &\forall \bdd{\theta}, \bdd{\psi} \in \bdd{H}^1(\Omega), \\
	c(u, v) &:= (\grad u, \grad v) + (u, v) \qquad & &\forall u,v \in H^1(\Omega).
\end{alignat*}
The linear functional $F(v) := B(w, v)$ is decomposed as in \cref{eq:F-functional-splitting} in the same manner:
\begin{align*}
	F_1(\bdd{\psi}) := a(\grad w, \bdd{\psi}) \qquad \forall \bdd{\psi} \in \bdd{H}^1(\Omega) \quad \text{and} \quad F_2(v) := c(w, v) \qquad \forall v \in H^1(\Omega).
\end{align*} 
When $\mathbb{W}$ consists of piecewise degree $p \geq 9$ polynomials, the optimal error estimate $\|w - w_X\|_2 \leq C h^{p-1}$ holds on an arbitrary shape-regular partition \cite[Theorem 18.3]{Lai07spline}, in part because a local basis can be constructed. Whether this bound holds for $p < 9$ in general is unclear.

Nevertheless, using the method developed in the current work, we are in a position to compute the approximation in the cases $p < 9$. For $p \in \{2,3,\ldots,9\}$ and the sequence of meshes $\{\mathcal{T}_m\}$, we compute the solution to \cref{eq:3d-projection-problem} by solving the mixed problem \cref{eq:h2-mixed-problem-fem} with $\tilde{\mathbb{W}}$ and $\mathbbb{G}$ chosen as in \cref{eq:w-c1-spline-tilde-w-g-choice} using the iterated penalty method \cref{eq:iter-penalty-morgan-scott-gen} with penalty parameter $\lambda = 10^4$ and terminating when $\|\bdd{\Xi}_X(\tilde{w}^n, \bdd{\gamma}^n)\|_{\curl} < 10^{-8}$. The relative $H^2$ errors $\|w_X - w\|_2 / \|w\|_2$ displayed in \cref{fig:3d-h2-error-h-version} show that the optimal error estimate appears to holds for $p \geq 6$, while the sub-optimal error estimate $\|w - w_X\|_2 \leq C h^{\max\{ p-3, 0\}}$ appears to hold for $p \in \{2,\ldots, 5\}$. The number of iterated penalty iterations displayed in \cref{tab:3d-ip-iters} tell a similar story -- the number of iterations is bounded as the mesh is refined for $p \geq 6$, but grows for $p \in \{2,3,\ldots, 5\}$. Using the same methods and parameters, we also compute the solution to \cref{eq:3d-projection-problem} for $p = \{2,3,\ldots, 12\}$ on the mesh $\mathcal{T}_2$. The relative $H^2$ errors in \cref{fig:3d-h2-error-p-version} appear to decay at an exponential rate, while the number of iterated penalty iterations in \cref{tab:3d-ip-iters} remains bounded as $p$ grows.

\begin{figure}[htb]
	\centering
	\begin{subfigure}[b]{0.48\linewidth}
		\centering
		\includegraphics[width=\linewidth]{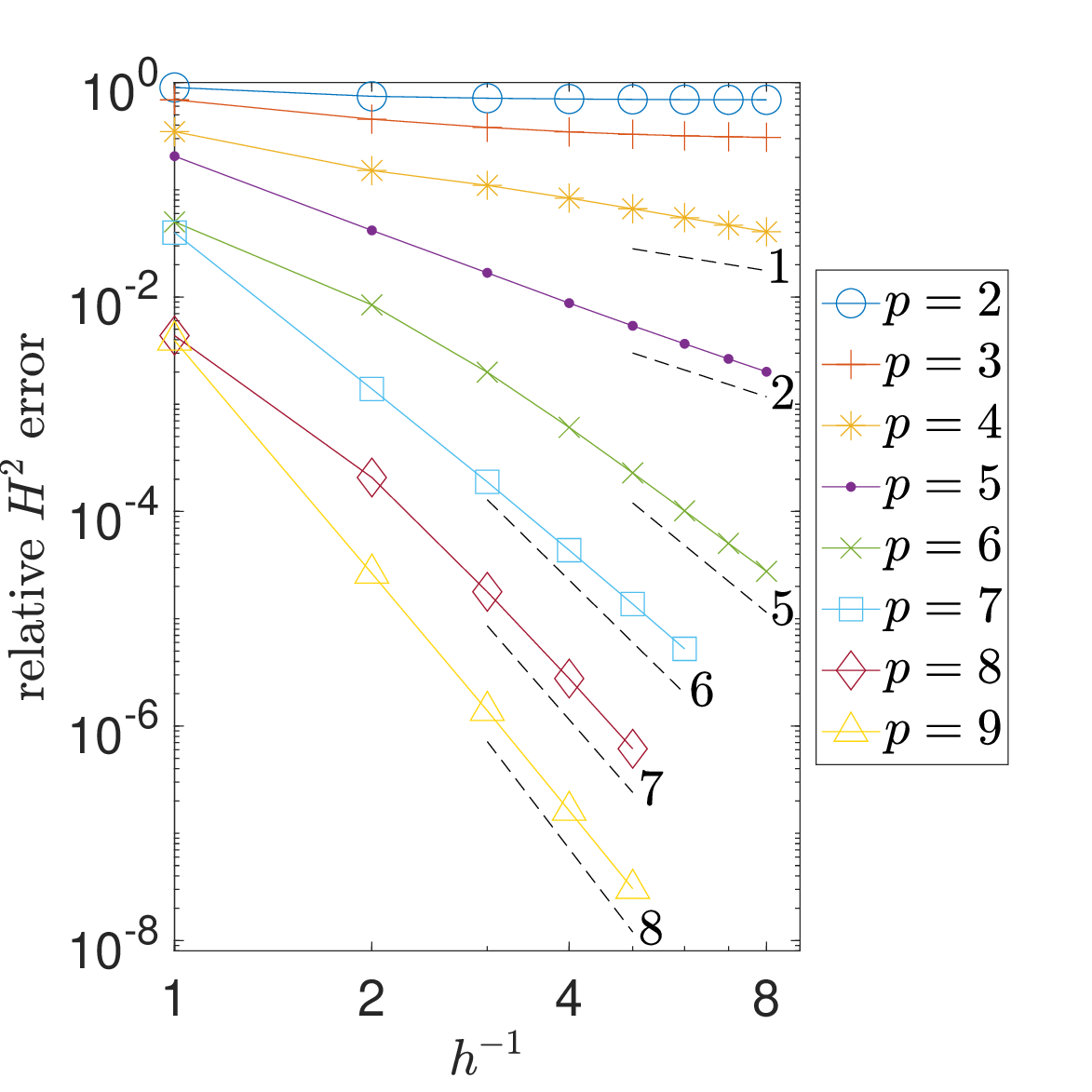}
		\caption{}
		\label{fig:3d-h2-error-h-version}
	\end{subfigure}
	\hfill
	\begin{subfigure}[b]{0.48\linewidth}
		\centering
		\includegraphics[width=\linewidth]{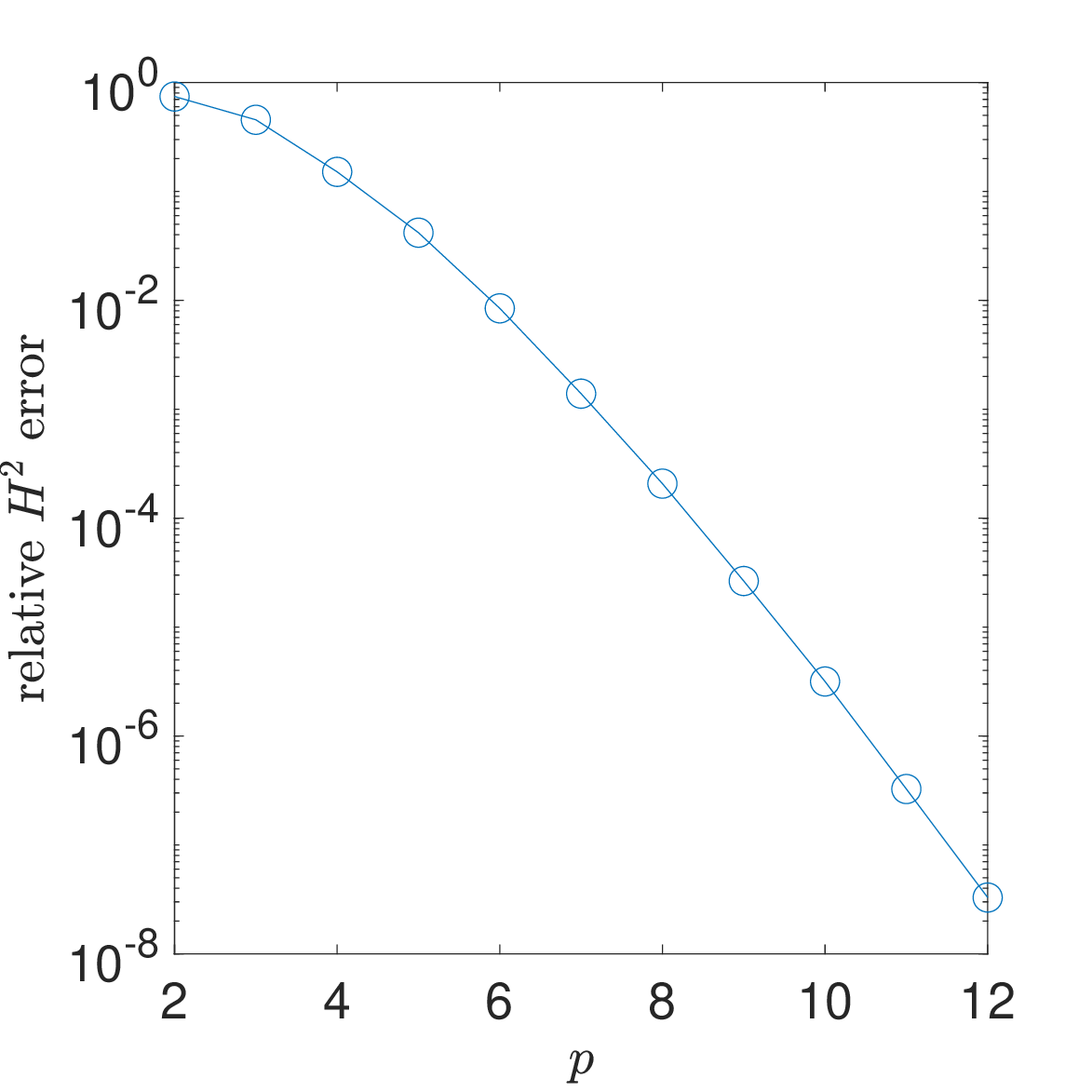}
		\caption{}
		\label{fig:3d-h2-error-p-version}
	\end{subfigure}
	\caption{Relative $H^2$ errors and for the projection problem \cref{eq:3d-projection-problem} with degree-$p$ $C^1$-spline elements \cref{eq:w-c1-spline} (a) on a sequence of meshes with $2 \leq p \leq 9$ and (b) on the single mesh $\mathcal{T}_2$ with $2 \leq p \leq 12$. The number next to each dashed line in (a) indicates its slope.}
\end{figure}

\begin{table}[htb]
	\centering
	\begin{tabular}{ c | c  c  c  c  c  c  c  c  c  c  c  }
		\diagbox{$m$}{$p$} & $2$ & $3$ & $4$ & $5$ & $6$ & $7$ & $8$ & $9$ & $10$ & $11$ & $12$ \\
		\hline
		1 & 3 & 3 & 3 & 3 & 3 & 3 & 2 & 3 & 2 & 2 & 2  \\
		2 & 3 & 4 & 4 & 4 & 4 & 3 & 3 & 2 & 2 & 1 & 1  \\
		3 & 3 & 5 & 5 & 5 & 4 & 3 & 2 & 2 & 1 & 1 & \\
		4 & 3 & 6 & 6 & 8 & 3 & 3 & 2 & 1 & & & \\
		5 & 3 & 8 & 8 & 11 & 3 & 3 & 1 & 1 & & &  \\
		6 & 3 & 11 & 11 & 17 & 3 & 3 & & & & & \\
		7 & 3 & 15 & 16 & 27 & 3 & & & & & & \\
		8 & 4 & 21 & 22 & 40 & 3 & & & & & & \\
	\end{tabular} 
	\caption{Iterated penalty iterations for the projection problem \cref{eq:3d-projection-problem} with degree-$p$ $C^1$-spline elements \cref{eq:w-c1-spline}, $2 \leq p \leq 12$, on the Freudenthal meshes $\{\mathcal{T}_m\}$.}
	\label{tab:3d-ip-iters}
\end{table}

\section{Extension to time-dependent problems?}
\label{sec:time-dependent-problems}

The numerical examples in the previous section show that the $H^2$-conforming finite element approximation \cref{eq:h2-bilinear-fem} can be readily computed without having to implement $C^1$-continuous elements. In this section, we explore the possibility of applying the method to time-dependent problems even though such problems do not satisfy the previous assumptions.

Right off the bat, we run into the problem that the computation of the initial data entails computing the $L^2$-projection of the initial data $g$ onto the $H^2$-conforming space:
\begin{align}
	\label{eq:l2-projection}
	w_0 \in \mathbb{W}_{\Gamma} : \qquad (w_0, v) &= (g, v) \qquad \forall v \in \mathbb{W}_{\Gamma},
\end{align}
where $g \in L^2(\Omega)$. The $L^2$-inner product  $(\cdot,\cdot)$ is not $H^2_{\Gamma}(\Omega)$-elliptic, and hence does not satisfy condition \cref{eq:b-bilinear-infsup}. However, thanks to equivalence of norms on the finite dimensional space $\mathbb{W}_{\Gamma}(\Omega)$, there exists $\varpi_X > 0$, depending on the dimension of $\mathbb{W}_{\Gamma}$, such that
\begin{align}
	\label{eq:l2-equiv-norm}
	\varpi_X \|v\|_2^2 \leq \| v \|^2 \qquad \forall v \in \mathbb{W}_{\Gamma}.
\end{align}
This means that, at the discrete level, the $L^2$-inner product on $\mathbb{W}_{\Gamma}$ satisfies assumptions \cref{eq:b-bilinear-bounded,eq:b-bilinear-infsup-discrete-gen} meaning that we can apply our method and solve the mixed problem \cref{eq:h2-mixed-problem-fem-any-inner} with $c(\cdot,\cdot) = (\cdot,\cdot)$ and $a(\cdot,\cdot) \equiv 0$ to recover the solution to \cref{eq:l2-projection}. However, we expect the number of iterated penalty iterations to degenerate as the discretization is refined since $\varpi_X$ is not uniformly bounded away from zero with respect to the dimension of $\mathbb{W}_{\Gamma}$. 

A similar situation arises in time-stepping, where advancing the approximation in time involves solving problems of the form
\begin{align}
	\label{eq:time-stepping-problem}
	w_n \in \mathbb{W}_{\Gamma} : \qquad (w_n, v) + (\Delta t)^{m} B(w_n, v) = F_n(v) \qquad \forall v \in \mathbb{W}_{\Gamma},
\end{align}
where  $B(\cdot,\cdot)$ satisfies conditions \cref{eq:b-bilinear-bounded,eq:b-bilinear-infsup,eq:b-bilinear-uniqueness,eq:b-bilinear-splitting,eq:b-bilinear-infsup-discrete-gen}, $m$ is the order of the time derivative of the problem, and $F_n \in \mathbb{W}_{\Gamma}^*$ depends on the data from previous time steps. If the bilinear form $B(\cdot,\cdot)$ is elliptic, i.e. $B(v,v) \geq \alpha \|v\|_2^2$ for all $v \in H^2_{\Gamma}(\Omega)$ for some $\alpha > 0$, then there holds
\begin{align*}
	(\varpi_X + \alpha (\Delta t)^m) \|v\|_2^2 \leq \|v\|^2 + (\Delta t)^{m} B(v, v) \qquad \forall v \in  \mathbb{W}_{\Gamma},
\end{align*}
where $\varpi_X$ is given by \cref{eq:l2-equiv-norm}. Once again, this means that the conditions needed for our approach are satisfied at the discrete level, so that the method can be used to solve the mixed problem \cref{eq:h2-mixed-problem-fem-any-inner} to recover the solution to \cref{eq:time-stepping-problem}. However, the fact that the constants in the assumptions depend on the dimension of the discrete space manifests itself through growth in the number of iterated penalty iterations as $\Delta t \to 0$ or as the discretization is refined. Nevertheless, the method can be applied to time-dependent problems subject to the above proviso. We illustrate the performance in the case of a dynamic Kirchhoff plate.

\subsection{Dynamic Kirchhoff plate}

We continue the example in \cref{sec:kirchhoff-static} by removing the point load that was responsible for the initial deflection and consider the resulting vibrations of the plate which are governed by the problem: Find $w(t) \in \mathbb{W}_{\Gamma}$ such that
\begin{align*}
	\frac{d^2}{dt^2} \left( w(t), v \right) + \frac{1}{\rho\tau} a_s(\grad w(t), \grad v)&= 0 \quad  \forall v \in \mathbb{W}_{\Gamma}, 
\end{align*}
subject to initial conditions $w(0) = w_X$ and $\partial_t w(0) = 0$, where $w_X$ is the initial deflection given by \cref{eq:kirchhoff-ex-pointload}. As in the previous section, $a_s(\cdot,\cdot)$ denotes the bilinear form defined in \cref{eq:kirchhoff-a}.

We discretize in time with a Newmark method (see e.g. \cite{Krenk06}) as follows. Let $\Delta t > 0$ be a given time step and $\beta, \delta \geq 0$ be given parameters. For $n = 1,2,\ldots,$ the approximation to $\partial_{tt} w$ at time $t_n := n\cdot \Delta t$ is updated by the following variational formulation: Find $w_n^{[2]} \in \mathbb{W}_{\Gamma}$ such that
\begin{align}
	\label{eq:newmark-dtt-update}
	(w_n^{[2]}, v) +  \frac{\beta (\Delta t)^2}{\rho \tau} a_s(\grad w_n^{[2]}, \grad v) = -\frac{1}{\rho\tau} a_s(\grad \hat{w}_{n-1}, \grad v) \qquad  \forall v \in \mathbb{W}_{\Gamma}, 
\end{align}
where 
\begin{align*}
	\hat{w}_{n-1} :=  w_{n-1} + (\Delta t) w_{n-1}^{[1]} + \left( \frac{1}{2} - \beta \right) (\Delta t)^2 w_{n-1}^{[2]}.
\end{align*}
The approximations to $w$ and $\partial_t w$ at time $t_n$ are then updated by
\begin{align}
	\label{eq:newmark-pos-velocity-update}
	w_{n} &= \hat{w}_{n-1} + \beta (\Delta t)^2 w_{n}^{[2]} 
	\quad \text{and} \quad
	w_{n}^{[1]} = w_{n-1}^{[1]} +  (\Delta t)\left( (1-\delta) w_{n-1}^{[1]} + \delta w_{n}^{[2]} \right).
\end{align}
The initial displacement $w_0 := w_X$ is solution from \cref{sec:kirchhoff-static} and the initial velocity $w_0^{[1]} = 0$, while the initial acceleration $w_0^{[2]}$ is determined by
\begin{align}
	\label{eq:newmark-initial-dtt}
	w_0^{[2]} \in \mathbb{W}_{\Gamma} : \qquad  (w_0^{[2]}, v) = -\frac{1}{\rho \tau} a_s(\grad w_0, \grad v) \qquad \forall v \in \mathbb{W}_{\Gamma},
\end{align}
which is precisely the form of problem in \cref{eq:l2-projection}. If $\delta = 1/2$ and $\beta = 1/4$, then \cite{Krenk06} shows that the energy 
\begin{align}
	\label{eq:newmark-energy}
	\mathcal{E}_n := \frac{1}{2} (w_n^{[1]}, w_n^{[1]}) + \frac{1}{2\rho \tau} a_s(\grad w_n, \grad w_n), \qquad n \geq 0,
\end{align}
remains constant for all $n \geq 0$.

\subsubsection{$L^2$-projection}

In order to carry out the timestepping of the scheme, it is necessary to solve \cref{eq:newmark-initial-dtt} to determine the initial acceleration. A standard inverse estimate $\|u\|_1 \leq C h^{-1} p^2 \|u\|$ for $u \in \{ u \in L^2(\Omega) : u|_{K} \in \mathcal{P}_p(K) \ \forall \mathcal{T} \}$ (see e.g. \cite{Schwab98}) gives
\begin{align}
	\label{eq:l2-projection-ms-degen}
	C h^4 p^{-8} \|v\|_2^2 \leq \|v\|^2 \leq \|v\|_2^2  \qquad \forall v \in \mathbb{W}_{\Gamma},
\end{align}
where $C$ is a positive constant independent of $h$ and $p$, which means that the bilinear form appearing in \cref{eq:newmark-initial-dtt} satisfies assumptions \cref{eq:b-bilinear-bounded,eq:b-bilinear-infsup-discrete-gen}, with a stability constant $\alpha_X := C h^4 p^{-8}$ that degenerates as the discretization is refined. To help compensate for this degeneration, we choose the inner-product in \cref{eq:h2-mixed-problem-fem-any-inner} to be the $L^2$ inner-product; i.e. $\dinner{\cdot}{\cdot} = (\cdot,\cdot)$.

We compute the solution to \cref{eq:newmark-initial-dtt} by solving the mixed problem \cref{eq:h2-mixed-problem-fem-any-inner} with $\tilde{\mathbb{W}}$ and $\mathbbb{G}$ chosen as in \cref{eq:w-c1-spline-tilde-w-g-choice} with $a(\cdot,\cdot) \equiv 0$, $c(\cdot,\cdot) = (\cdot,\cdot)$, $F_1(\cdot) = -a_s(\grad w_0, \cdot)/(\rho \tau)$, $F_2(\cdot) \equiv 0$, and $[\cdot,\cdot] = (\cdot,\cdot)$ using the iterated penalty method \cref{eq:iter-penalty-morgan-scott-gen} with penalty parameter $\lambda = 2\cdot 10^4$ and terminating when $\|\bdd{\Xi}_X(\tilde{w}^n, \bdd{\gamma}^n)\| < 10^{-8}$. The iterated penalty method required 2 iterations to converge for $3 \leq p \leq 7$ and 3 for $8 \leq p \leq 12$, but failed to converge for $p \geq 13$. Consequently, the degeneracy predicted by \cref{eq:l2-projection-ms-degen} does indeed manifest in practice for sufficiently large $p$.  However, the flexibility of the inner-product $\dinner{\cdot}{\cdot}$ does aid to some extent -- if we instead choose $\dinner{\cdot}{\cdot}$ to be the $\hcurl$ inner-product $(\cdot,\cdot)_{\curl}$, then the iterated penalty method fails to converge for $p \geq 5$. The $p=10$ solution, denoted $\tilde{w}_0^{[2]}$ and displayed in \cref{fig:kirchhoff-init-wtt-combo}, is concentrated at the point load and is indeed $C^1$-conforming.

\begin{figure}[htb]
	\centering
	\begin{subfigure}[b]{0.48\linewidth}
		\centering
		\includegraphics[width=\linewidth]{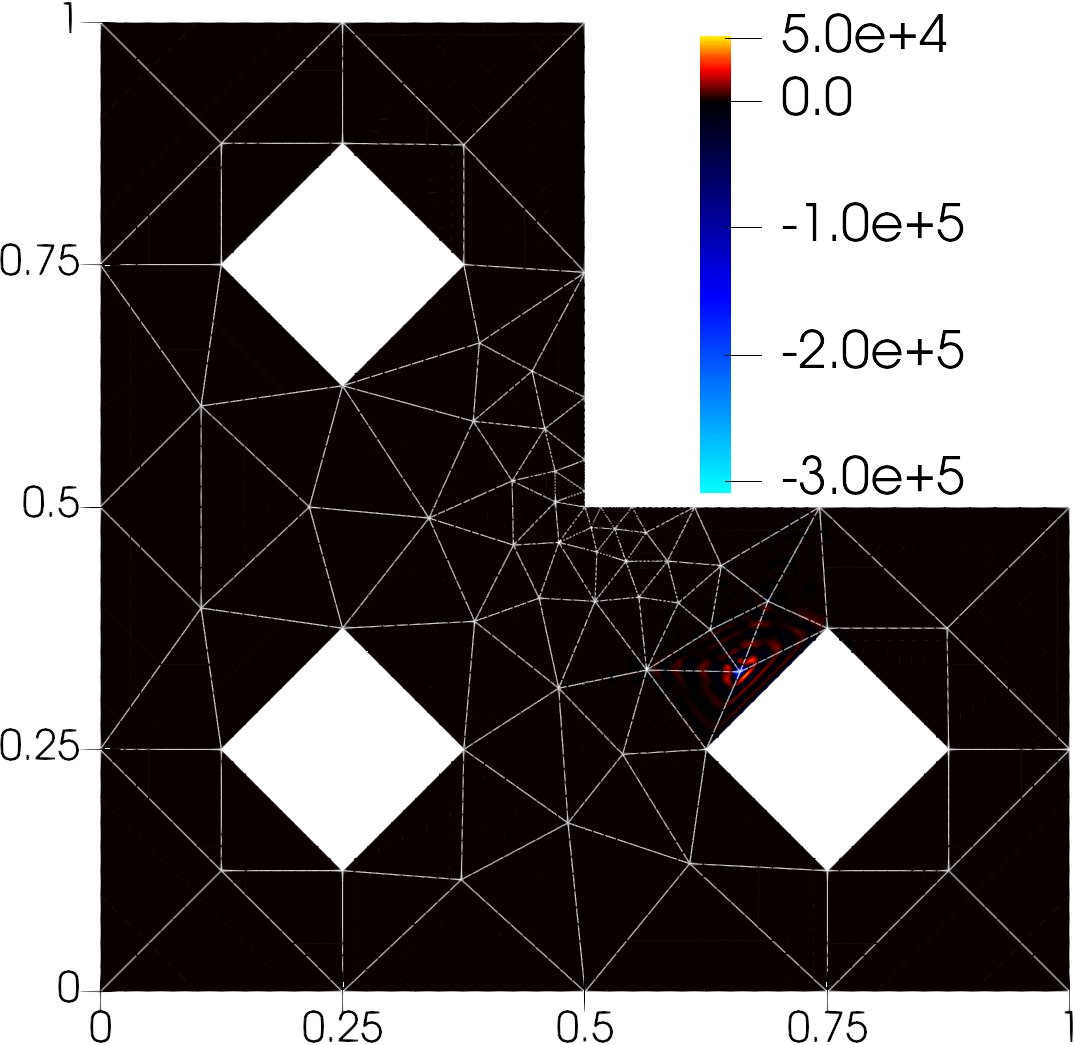}
		\caption{$\tilde{w}_0^{[2]}$}
		\label{fig:kirchhoff-init-wtt}
	\end{subfigure}
	\hfill
	\begin{subfigure}[b]{0.48\linewidth}
		\centering
		\includegraphics[width=\linewidth]{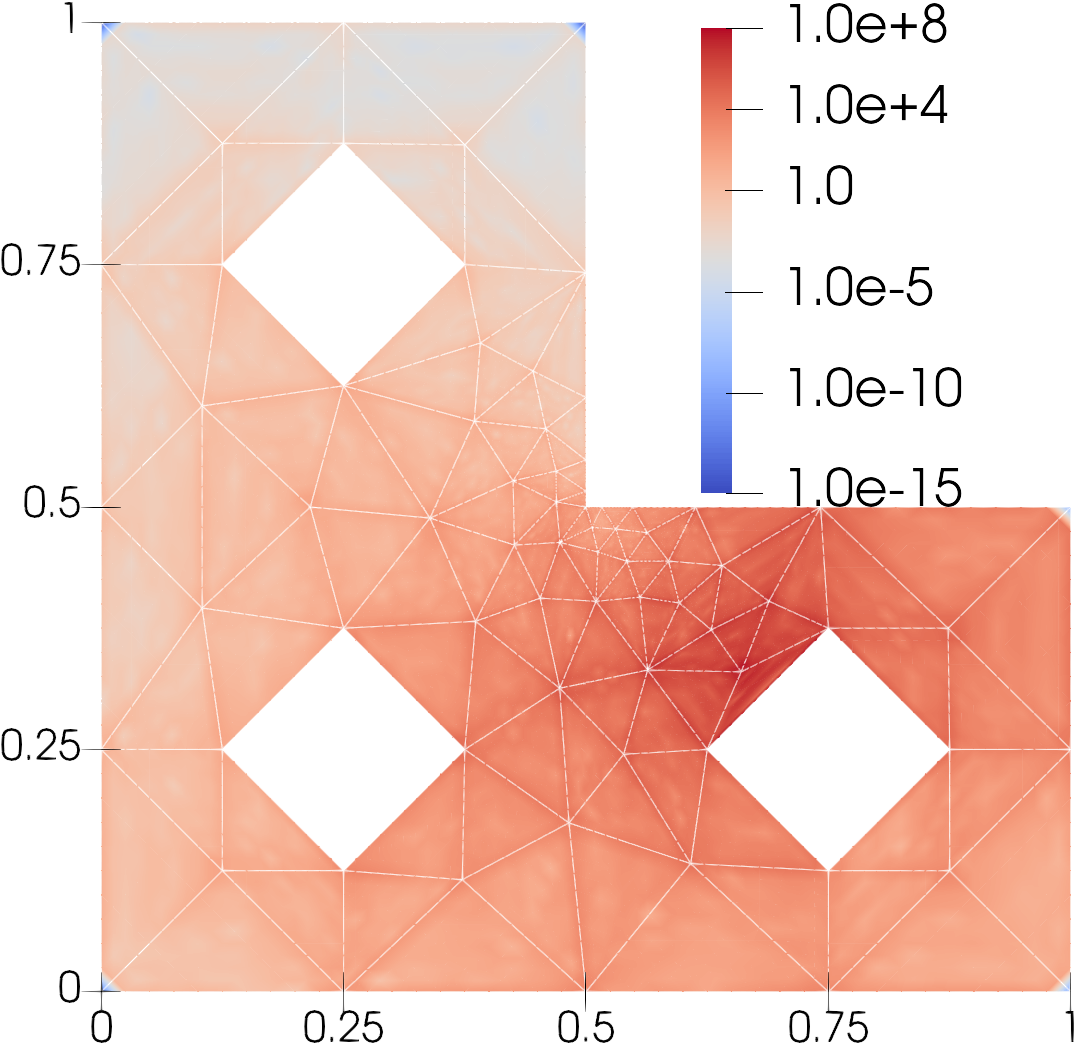}
		\caption{$|\grad \tilde{w}_0^{[2]} |$}
		\label{fig:kirchhoff-init-wtt-grad}
	\end{subfigure}
	\caption{(a) The initial acceleration $\tilde{w}_0^{[2]}$ \cref{eq:newmark-initial-dtt} and (b) $|\grad \tilde{w}_0^{[2]} |$ for the $p=10$ solution to the dynamic Kirchhoff plate problem.}
	\label{fig:kirchhoff-init-wtt-combo}
\end{figure}

\subsubsection{Time-stepping}

The approximation is advanced in time by solving the linear system \cref{eq:newmark-dtt-update} with $\Delta t = 2\cdot 10^{-4}$. The bilinear form appearing in problem \cref{eq:newmark-dtt-update} is of the form \cref{eq:time-stepping-problem} with
\begin{align}
	\label{eq:time-stepping-ms}
	C \left(h^4 p^{-8} + \frac{\beta (\Delta t)^2}{\rho \tau}\right) \|v\|_{2}^2 \leq   \|v\|^2 +  \frac{\beta (\Delta t)^2}{\rho \tau} a_s(\grad v, \grad v) 
	\qquad \forall v \in H_{\Gamma}^2(\Omega),
\end{align}
where $C$ is a positive constant independent of the discretization parameters $h$, $p$, and $\Delta t$. We compute the solution to \cref{eq:newmark-dtt-update} by solving the mixed problem \cref{eq:h2-mixed-problem-fem-any-inner} with $\tilde{\mathbb{W}}$ and $\mathbbb{G}$ chosen as in \cref{eq:w-c1-spline-tilde-w-g-choice} with $a(\cdot,\cdot) = (\beta (\Delta t)^2 /(\rho \tau)) a_s(\cdot,\cdot)$, $c(\cdot,\cdot) = (\cdot,\cdot)$, $F_1(\cdot) = -a_s(\hat{\bdd{\gamma}}_{n-1}, \cdot)/(\rho \tau)$, $F_2(\cdot) \equiv 0$, and $[\cdot,\cdot] = (\cdot,\cdot) + (\Delta t)^2 (\curl \cdot, \curl \cdot)$ using the iterated penalty method \cref{eq:iter-penalty-morgan-scott-gen} with penalty parameter $\lambda = 10^4$ and terminating when
\begin{align*}
	\left( \|\bdd{\Xi}_X(\tilde{w}^n, \bdd{\gamma}^n)\|^2 + (\Delta t)^2 \| \curl \bdd{\Xi}_X(\tilde{w}^n, \bdd{\gamma}^n)\|^2 \right)^{1/2} < 10^{-8}.
\end{align*}
Here,
\begin{align*}
	\hat{\bdd{\gamma}}_{n-1} := \bdd{\gamma}_{n-1} + (\Delta t) \bdd{\gamma}_{n-1}^{[1]} + \left( \frac{1}{2} - \beta \right) (\Delta t)^2 \bdd{\gamma}_{n-1}^{[2]},
\end{align*}
where $\bdd{\gamma}_{n}^{[2]}$ is the approximation to $\grad w_{n}^{[2]}$ given by the iterated penalty method and $\bdd{\gamma}_n$ and $\bdd{\gamma}_{n}^{[1]}$ are updated by the same expressions as in \cref{eq:newmark-pos-velocity-update} for $n \geq 1$. The initial value $\bdd{\gamma}_0$ is the approximation to $\grad w_{0}$ given by the iterated penalty method from the computation in \cref{sec:kirchhoff-static} and $\bdd{\gamma}_0^{[1]} := \bdd{0}$. 

\begin{figure}[htb]
	\centering
	\begin{subfigure}[b]{0.48\linewidth}
		\centering
		\includegraphics[width=\linewidth]{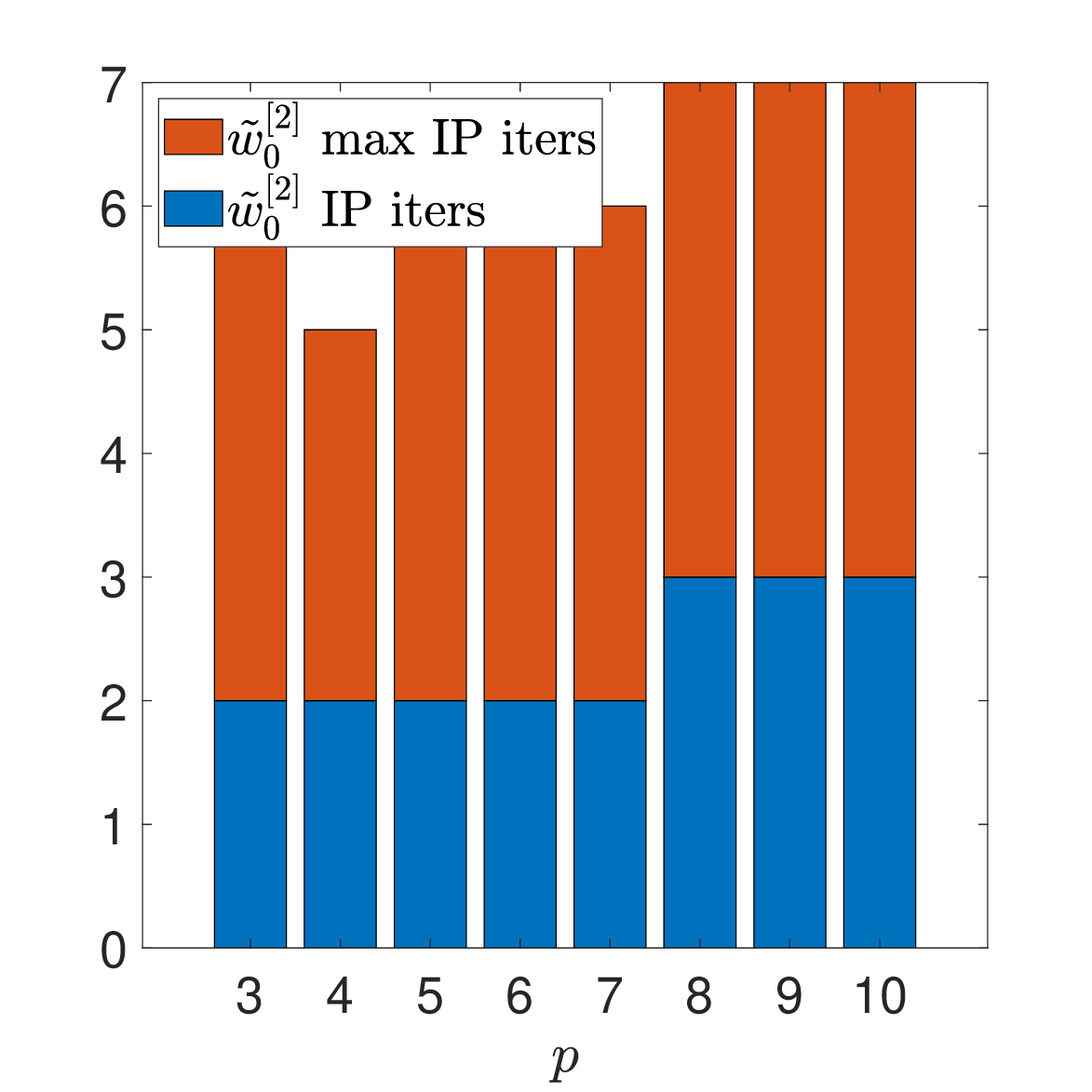}
		\caption{}
		\label{fig:kirchhoff-iters}
	\end{subfigure}
	\hfill 
	\begin{subfigure}[b]{0.48\linewidth}
		\centering
		\includegraphics[width=\linewidth]{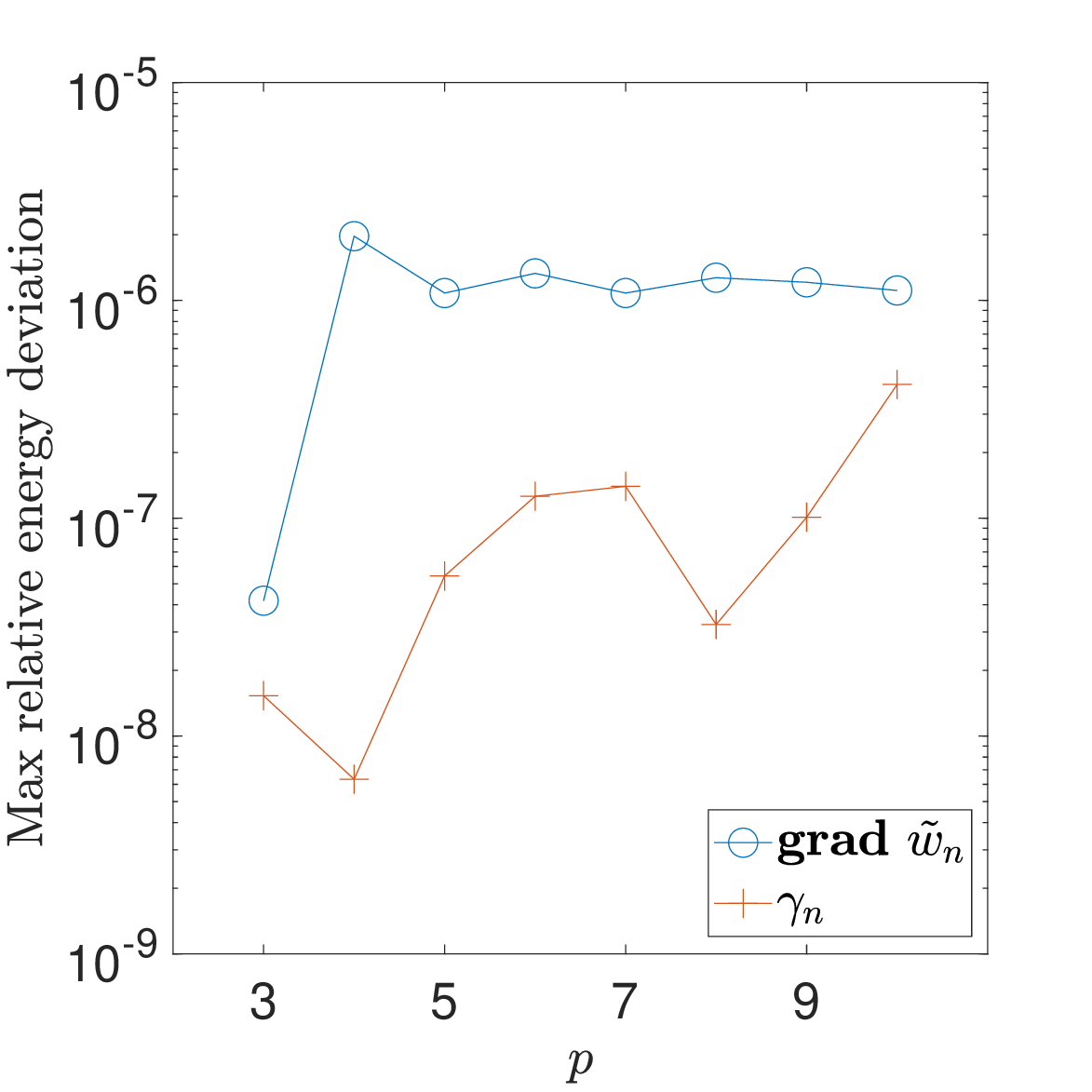}
		\caption{}
		\label{fig:kirchhoff-energy}
	\end{subfigure}
	\caption{(a) The number of iterated penalty (IP) iterations to compute the initial acceleration $w_0^{[2]}$ given by \cref{eq:newmark-initial-dtt} and the max number of iterations to advance the acceleration $w_n^{[2]}$ via \cref{eq:newmark-dtt-update}; and (b) the maximum relative energy deviation \cref{eq:newmark-energy-deviation} computed using $\grad \tilde{w}_n$ and $\bdd{\gamma}_n$. The metrics in (a) and (b) are computed over 251 time steps.}
\end{figure}

The largest number of iterated penalty iterations over 251 time steps, displayed in \cref{fig:kirchhoff-iters}, show that at most 4 iterations are required for $3 \leq p \leq 10$, but the iterated penalty method failed to converge for $p \geq 11$. Thus, the degeneracy in \cref{eq:time-stepping-ms} also arises in practice for sufficiently large $p$. If we instead choose $\dinner{\cdot}{\cdot} = (\cdot,\cdot)_{\curl}$, then the iterated penalty method fails to converge for $p \geq 5$, which again shows that the flexibility in the choice of $\dinner{\cdot}{\cdot}$ is helpful is pushing the boundaries of the method. A snapshot of the displacement $\tilde{w}_n$ and velocity $\tilde{w}_n^{[1]}$ at time $t=0.0502$ is displayed in \cref{fig:kirchhoff-w-wt-times} and the plots of the gradients confirm that the solutions are indeed $C^1$.

If the linear system \cref{eq:newmark-dtt-update} is solved exactly for all $n \geq 1$, then the energy \cref{eq:newmark-energy} is exactly conserved. To measure the quantify the effect of computing the solution to \cref{eq:newmark-dtt-update} using the iterated penalty method with the above tolerances, we display the maximum relative energy deviation
\begin{align}
	\label{eq:newmark-energy-deviation}
	\max_{1 \leq n \leq 251} \frac{ |\mathcal{E}_n - \mathcal{E}_0| }{\mathcal{E}_0}
\end{align}
in \cref{fig:kirchhoff-energy} for $3 \leq p \leq 10$. We compute $\mathcal{E}_n$ in two ways: (i) using $\grad \tilde{w}_n$ in \cref{eq:newmark-energy} and (ii) using $\bdd{\gamma}_n$ in place of $\grad \tilde{w}_n$. The maximum relative energy deviation is on the order of $2 \cdot 10^{-6} $ when $\grad \tilde{w}_n$ is used, while the relative energy deviation is on the order of $10^{-8}$ for $3 \leq p \leq 5$, but rises to $4\cdot 10^{-7}$ as $p$ increases to $11$ due to the accumulation of roundoff errors when $\bdd{\gamma}_n$ is used.

\begin{figure}[htb!]
	\centering
	\begin{subfigure}[b]{0.48\linewidth}
		\centering
		\includegraphics[width=\linewidth]{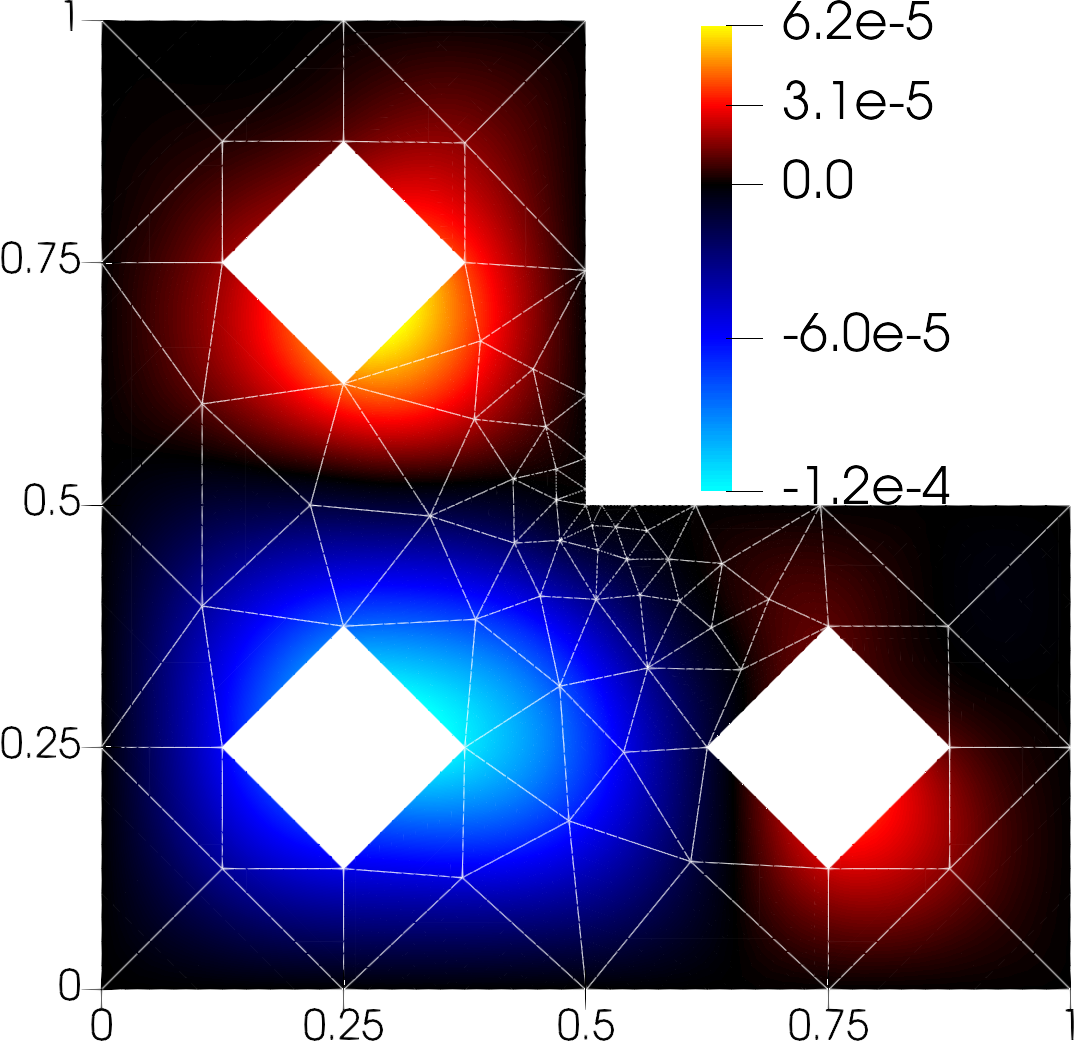}
		\caption{$\tilde{w}_n$}
		\label{fig:kirchhoff-t100}
	\end{subfigure}
	\hfill
	\begin{subfigure}[b]{0.48\linewidth}
		\centering
		\includegraphics[width=\linewidth]{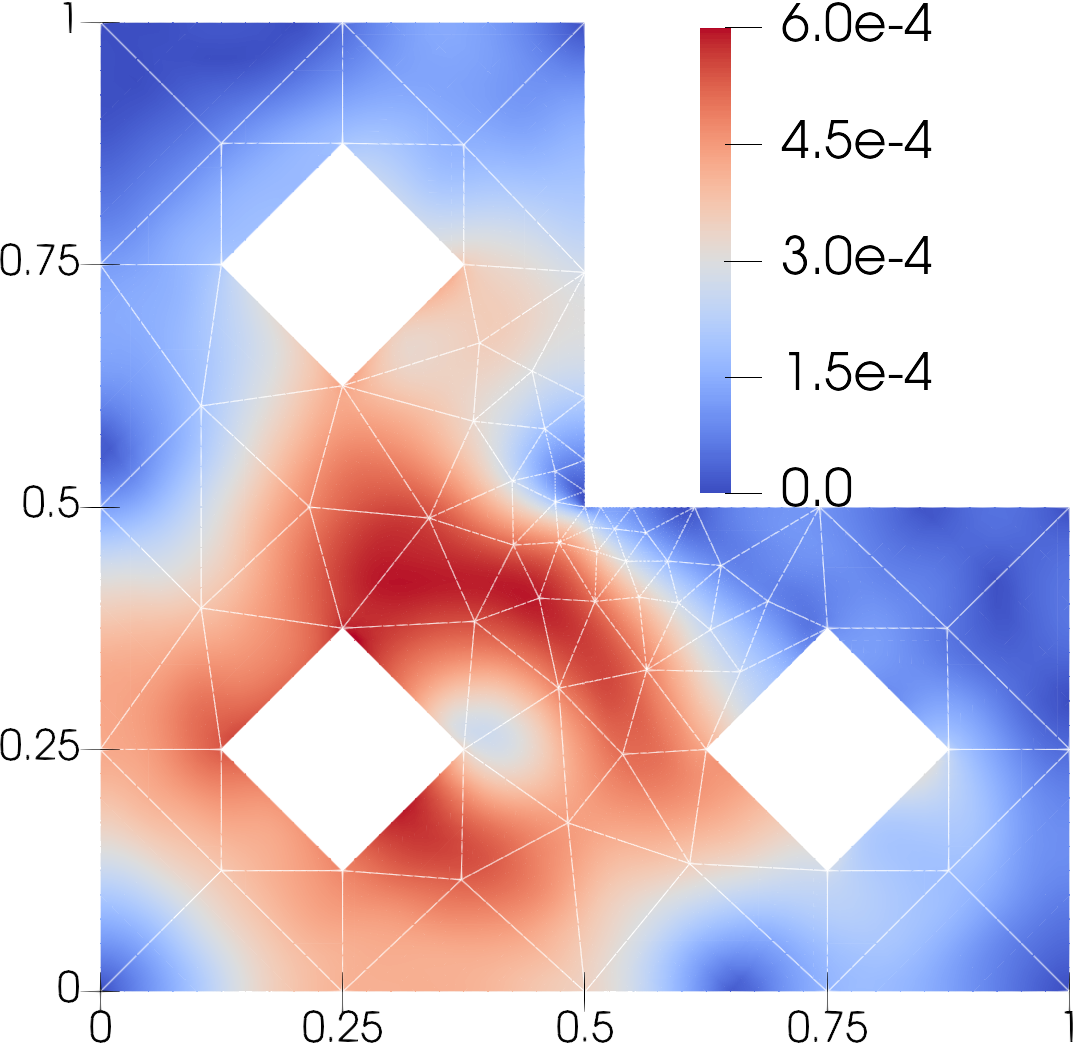}
		\caption{$|\grad \tilde{w}_n|$}
		\label{fig:kirchhoff-grad-t100}
	\end{subfigure} 
	\\
	\begin{subfigure}[b]{0.48\linewidth}
		\centering
		\includegraphics[width=\linewidth]{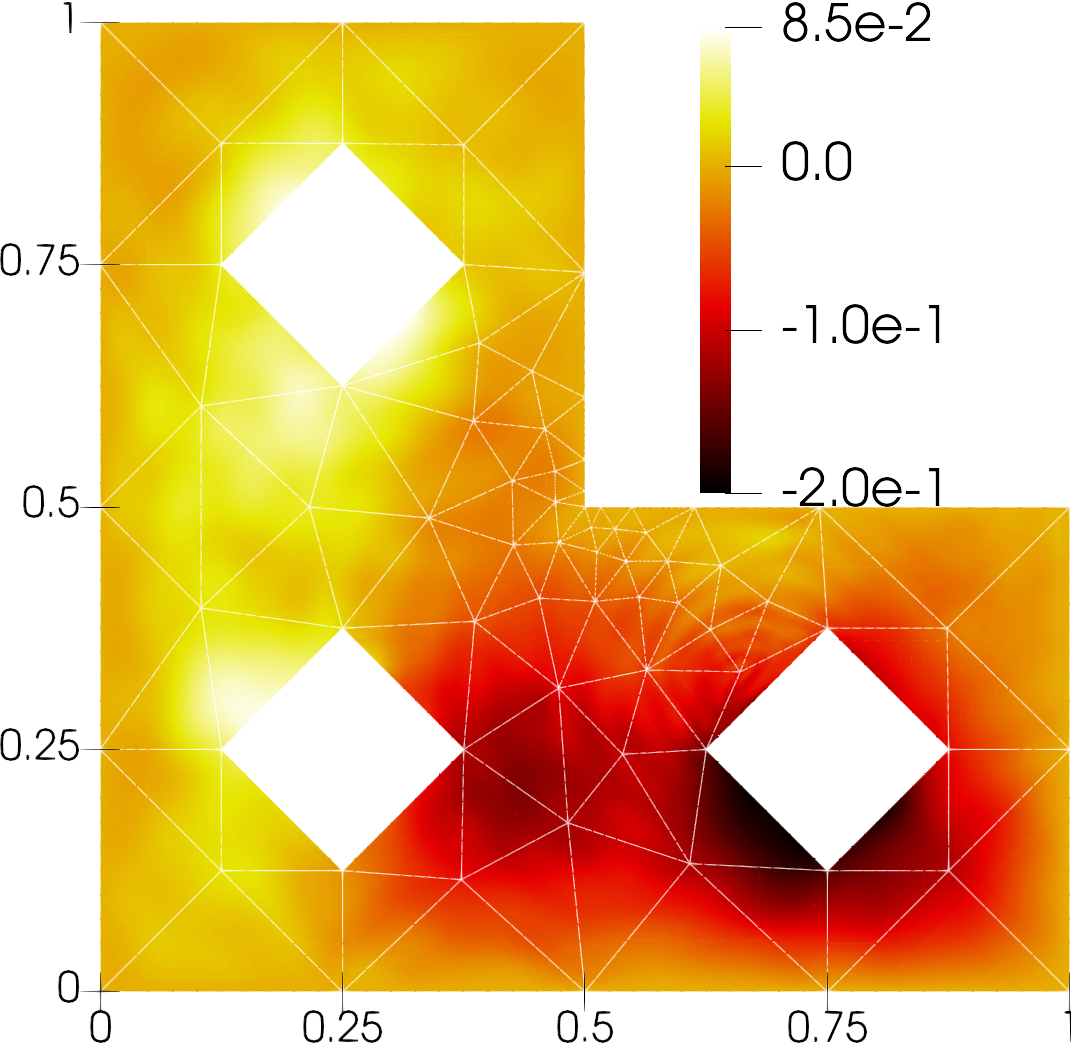}
		\caption{$\tilde{w}_n^{[1]}$}
		\label{fig:kirchhoff-wt-t100}
	\end{subfigure}
	\hfill
	\begin{subfigure}[b]{0.48\linewidth}
		\centering
		\includegraphics[width=\linewidth]{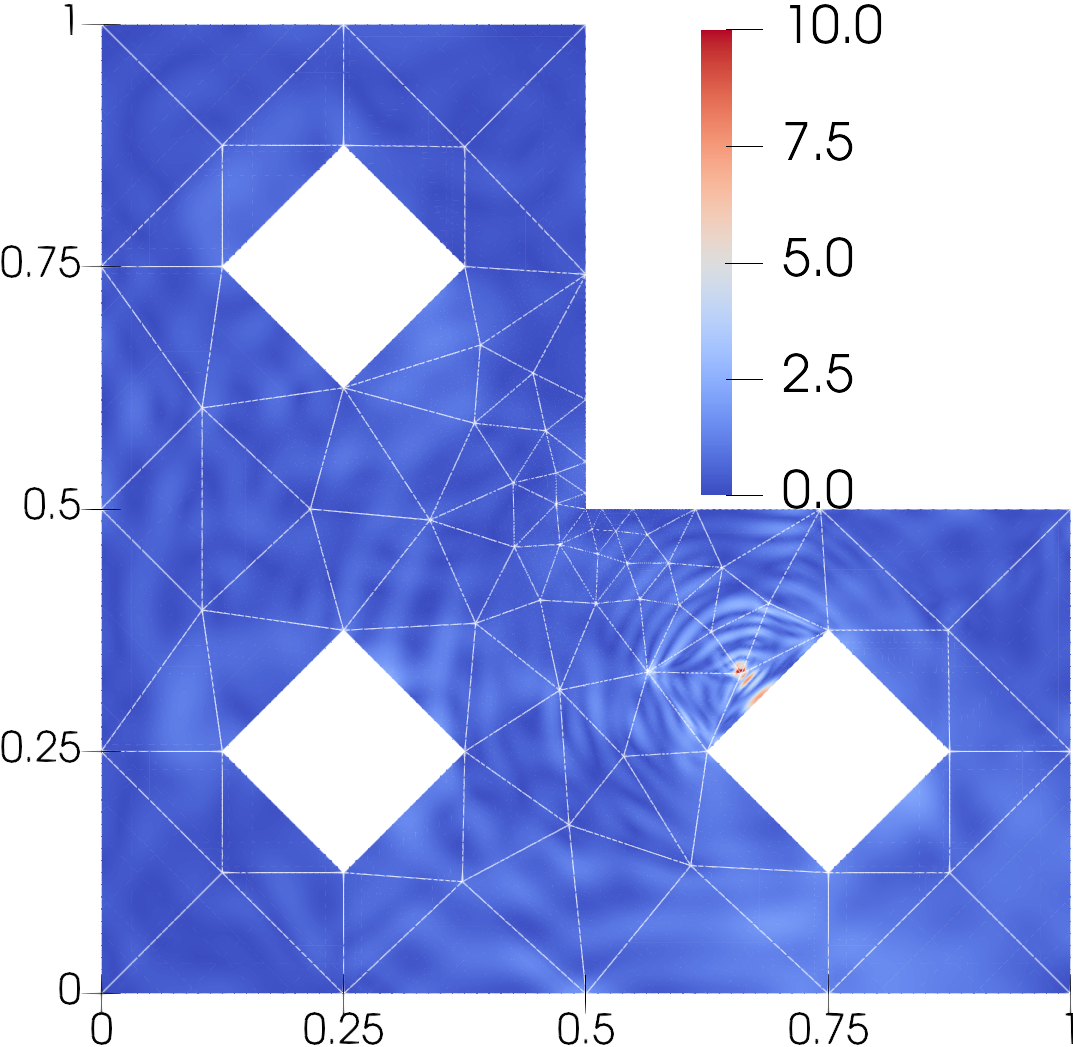}
		\caption{$| \grad \tilde{w}_n^{[1]} |$}
		\label{fig:kirchhoff-wt-grad-t100}
	\end{subfigure}
	\caption{The $p=10$ displacement $\tilde{w}_n$ and its gradient (a-b) and velocity $\tilde{w}_n^{[1]}$ and its gradient (c-d) for the dynamic Kirchhoff plate problem at time $t=0.0502$.}
	\label{fig:kirchhoff-w-wt-times}
\end{figure}

\section{Uniform stability of the Morgan-Scott space}
\label{sec:morgan-scott}

Throughout this section, let $d=2$ and $\mathbb{W}$ be Morgan-Scott space \cref{eq:w-c1-spline}, and let $\tilde{\mathbb{W}}$ and $\mathbbb{G}$ be chosen as in \cref{eq:w-c1-spline-tilde-w-g-choice}:
\begin{align*}
	\tilde{\mathbb{W}} &= \{ v \in C(\Omega) : v|_{K} \in \mathcal{P}_{p}(K) \ \forall K \in \mathcal{T} \}, \\
	\mathbbb{G} &= \{ \bdd{\theta} \in \bdd{C}(\Omega) : \bdd{\theta}|_{K} \in [\mathcal{P}_{p-1}(K)]^d \ \forall K \in \mathcal{T} \}.
\end{align*}
We shall show that when the polynomial degree is sufficiently large ($p \geq 5$) the space $\mathbbb{Q}_{\Gamma} = \image \bdd{\Xi}_X$ can be explicitly characterized and the inf-sup constant $\beta_X$ \cref{eq:inf-sup-xi} is independent of both the mesh size $h := \max_{K \in \mathcal{T}} \mathrm{diam}(K)$ and the polynomial degree $p$:
\begin{theorem}
	\label{thm:morgan-scott-inf-sup}
	For $p \geq 5$, there holds
	\begin{align}
		\label{eq:morgan-scott-inf-sup}
		\beta_X \geq \beta_0 \xi_{\mathcal{T}}
	\end{align}
	where $\beta_X$ is the inf-sup constant defined in \cref{eq:inf-sup-xi} with $\dinner{\cdot}{\cdot} = (\cdot,\cdot)_{\curl}$, $\beta_0 > 0$ is independent of $h$ and $p$, and $\xi_{\mathcal{T}} > 0$ is a quantity depending only on the topology of the mesh.
\end{theorem}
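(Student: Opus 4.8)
The plan is to reduce \cref{eq:morgan-scott-inf-sup} to the construction of a right inverse of $\bdd{\Xi}_X$ on $\mathbbb{Q}_{\Gamma}=\image\bdd{\Xi}_X$ that is stable uniformly in $h$ and $p$, up to the topological factor. Concretely, suppose that for every $\bdd{\theta}\in\mathbbb{Q}_{\Gamma}$ one can exhibit $(\tilde{v},\bdd{\psi})\in\tilde{\mathbb{W}}_{\Gamma}\times\mathbbb{G}_{\Gamma}$ with $\grad\tilde{v}-\bdd{\psi}=\bdd{\theta}$ and $\|\tilde{v}\|_1+\|\bdd{\psi}\|_1\leq(\beta_0\xi_{\mathcal{T}})^{-1}\|\bdd{\theta}\|_{\curl}$. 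Testing the inner supremum in \cref{eq:inf-sup-xi} with $\bdd{\eta}=\bdd{\theta}$ and this pair, and using $\dinner{\cdot}{\cdot}=(\cdot,\cdot)_{\curl}$ together with $\bdd{\Xi}_X(\tilde{v},\bdd{\psi})=\bdd{\theta}$ (cf.~\cref{eq:xip-def}), gives
\[
\frac{(\bdd{\Xi}_X(\tilde{v},\bdd{\psi}),\bdd{\theta})_{\curl}}{(\|\tilde{v}\|_1+\|\bdd{\psi}\|_1)\,\|\bdd{\theta}\|_{\curl}}=\frac{\|\bdd{\theta}\|_{\curl}}{\|\tilde{v}\|_1+\|\bdd{\psi}\|_1}\geq\beta_0\xi_{\mathcal{T}},
\]
which is \cref{eq:morgan-scott-inf-sup}. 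So everything hinges on a uniformly stable, essentially local decomposition $\bdd{\theta}=\grad\tilde{v}-\bdd{\psi}$.

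First I would pin down $\mathbbb{Q}_{\Gamma}$ itself. Since $\mathbbb{G}_{\Gamma}$ is a subspace, $\mathbbb{Q}_{\Gamma}=\grad\tilde{\mathbb{W}}_{\Gamma}+\mathbbb{G}_{\Gamma}$, and every element is a piecewise $\mathcal{P}_{p-1}$ field in $\hcurlgamma$ (as observed in \cref{sec:novel-mixed}); hence $\mathbbb{Q}_{\Gamma}\subseteq\mathcal{N}_{\Gamma}:=\{\bdd{\theta}\in\hcurlgamma:\bdd{\theta}|_K\in[\mathcal{P}_{p-1}(K)]^2\ \forall K\in\mathcal{T}\}$, the full-degree N\'ed\'elec space carrying the boundary conditions of $\hcurlgamma$. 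By \cref{lem:ker-xix}, $\dim\Kernel\bdd{\Xi}_X=\dim\mathbb{W}_{\Gamma}$, so $\dim\mathbbb{Q}_{\Gamma}=\dim\tilde{\mathbb{W}}_{\Gamma}+\dim\mathbbb{G}_{\Gamma}-\dim\mathbb{W}_{\Gamma}$; comparing with the elementary dimension of $\mathcal{N}_{\Gamma}$ and with the Morgan--Scott dimension formula for $\dim\mathbb{W}_{\Gamma}$, which is purely combinatorial precisely for $p\geq5$, yields $\dim\mathbbb{Q}_{\Gamma}=\dim\mathcal{N}_{\Gamma}$ and hence $\mathbbb{Q}_{\Gamma}=\mathcal{N}_{\Gamma}$. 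This is the explicit characterization; \cref{lem:fem-spaces-compat-bcs} lets the counts be done while ignoring, then reinstating, the partition $\Gamma=\Gamma_c\cup\Gamma_s\cup\Gamma_f$. (For $p\leq4$ the $C^1$-spline dimension, hence $\mathbbb{Q}_{\Gamma}$, can depend on the geometry of $\mathcal{T}$ through singular vertices, which is why $p\geq5$ is assumed.)

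Next I would build the decomposition in two pieces, paralleling the proof of \cref{lem:hrotgamma-grad-h10-decomp} but tracking $h$ and $p$. Given $\bdd{\theta}\in\mathbbb{Q}_{\Gamma}$, its non-$\bdd{H}^1$ content is carried by $\curl\bdd{\theta}\in\curl\mathbbb{G}_{\Gamma}$ together with finitely many cohomological functionals determined by the topology of $\Omega$ (the circulations around $\Omega_1,\ldots,\Omega_{N_H}$) and by the partition $\Gamma=\Gamma_c\cup\Gamma_s\cup\Gamma_f$; all of these lie in the range of the corresponding functionals on $\mathbbb{G}_{\Gamma}$, since $\curl\grad\tilde{v}=0$, such gradients have zero circulation, and every $\tilde{v}\in\tilde{\mathbb{W}}_{\Gamma}$ vanishes on $\Gamma_c\cup\Gamma_s$. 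I would first construct $\bdd{\psi}\in\mathbbb{G}_{\Gamma}$ with $\curl\bdd{\psi}=-\curl\bdd{\theta}$ and cohomological data opposite to that of $\bdd{\theta}$, with $\|\bdd{\psi}\|_1\leq(\beta_0\xi_{\mathcal{T}})^{-1}\|\curl\bdd{\theta}\|\leq(\beta_0\xi_{\mathcal{T}})^{-1}\|\bdd{\theta}\|_{\curl}$. Then $\bdd{\zeta}:=\bdd{\theta}+\bdd{\psi}\in\mathbbb{Q}_{\Gamma}$ is elementwise curl-free with vanishing circulations and vanishing boundary potential differences, so $\bdd{\zeta}=\grad\tilde{v}$ for a single-valued $\tilde{v}$ --- its continuity follows from the tangential continuity of $\bdd{\zeta}$, and the boundary conditions in $\hcurlgamma$ together with the vanishing potential differences place $\tilde{v}\in\tilde{\mathbb{W}}_{\Gamma}$ after fixing one additive constant --- and a Poincar\'e-type inequality on $\Omega$ gives $\|\tilde{v}\|_1\leq C\|\bdd{\zeta}\|\leq C\|\bdd{\theta}\|_{\curl}$. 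Since $\bdd{\theta}=\grad\tilde{v}-\bdd{\psi}$, combining the two estimates completes the reduction.

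The main obstacle is the uniform-in-$p$ control of the construction of $\bdd{\psi}$: that the right inverse of $\bdd{\psi}\mapsto(\curl\bdd{\psi},\ \text{cohomological data})$ on $\mathbbb{G}_{\Gamma}$ has norm independent of $p$ (the $h$-dependence being standard via shape regularity and scaling). Modulo the finitely many cohomological constraints, this is precisely $p$-robust inf-sup stability of the degree-$(p-1)$ Scott--Vogelius (equivalently, stream-function / $C^1$-spline) velocity--pressure pair in two dimensions, whose stability needs velocity degree at least $4$, i.e.\ $p\geq5$, and which can be assembled from $p$-stable local right inverses of the divergence on reference element stars --- again exploiting the local-basis property available for $p\geq5$. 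The cohomological part requires choosing, for each hole and each pair of components of $\Gamma_c\cup\Gamma_s$, a chain of mesh elements along which the circulation or potential is carried; the cost of these choices depends only on the combinatorics of $\mathcal{T}$, which is the origin of $\xi_{\mathcal{T}}$ (trivial when $\Omega$ is simply connected and $\Gamma_c\cup\Gamma_s$ is connected). A secondary point is the careful verification of the dimension identity of the second paragraph in the presence of the boundary partition, for which \cref{lem:fem-spaces-compat-bcs} is again the tool.
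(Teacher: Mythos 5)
Your overall strategy matches the paper's: reduce the inf-sup estimate to the existence of a uniformly bounded right inverse of $\bdd{\Xi}_X$, and construct that right inverse by first absorbing the curl and the cohomological data of $\bdd{\theta}$ into $\bdd{\psi}\in\mathbbb{G}_{\Gamma}$ via a $p$-robust right inverse of $\curl$, then recovering $\tilde{v}$ as a potential of the remaining curl-free field. The first paragraph and the skeleton of the third are sound. However, there are two genuine gaps.

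First, the characterization $\mathbbb{Q}_{\Gamma}=\mathcal{N}_{\Gamma}$ is false in general. Since $\curl\bdd{\Xi}_X(\tilde{v},\bdd{\psi})=-\curl\bdd{\psi}$, the image carries the constraint $\curl\bdd{\theta}\in\curl\mathbbb{G}_{\Gamma}$, and the paper's characterization is $\image\bdd{\Xi}_X=\{\bdd{\gamma}\in\bdd{BDM}_{\Gamma}^{p-1}:\curl\bdd{\gamma}\in\curl\mathbbb{G}_{\Gamma}\}$, which is a proper subspace whenever $\curl$ is not surjective from $\mathbbb{G}_{\Gamma}$ onto the discontinuous $\mathcal{P}_{p-2}$ scalars. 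That surjectivity fails precisely at singular interior vertices (the Scott--Vogelius phenomenon), and the Morgan--Scott dimension of $\mathbb{W}$ for $p\geq5$ is \emph{not} purely combinatorial -- it jumps by one at each singular interior vertex. A concrete check: for a fan of four triangles around one interior vertex, $\dim\tilde{\mathbb{W}}+\dim\mathbbb{G}-\dim\bdd{BDM}^{p-1}$ gives $43$ at $p=5$, which equals $\dim\mathbb{W}$ only for a generic vertex; at a singular (cross) vertex $\dim\mathbb{W}=44$, so $\dim\image\bdd{\Xi}_X=42<43$. This misidentification is fortunately self-correcting in your construction, since solving $\curl\bdd{\psi}=-\curl\bdd{\theta}$ in $\mathbbb{G}_{\Gamma}$ is possible precisely when $\curl\bdd{\theta}\in\curl\mathbbb{G}_{\Gamma}$; but the dimension-count argument should be dropped in favor of the constraint characterization, and the remark about $p\leq4$ versus $p\geq5$ is not the dichotomy you describe.

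Second, the potential step is underspecified where the boundary trace is corrected. After constructing $\bdd{\psi}$ and setting $\bdd{\zeta}=\bdd{\theta}+\bdd{\psi}=\grad\tilde{v}$, the tangential trace of $\bdd{\zeta}$ vanishes on $\Gamma_{cs}$, so $\tilde{v}$ is merely \emph{constant} on each connected component $\Gamma_{cs}^{(i)}$, and these constants cannot all be set to zero by the single free additive constant once $\Omega$ has more than one boundary component (and even on one component, the paper only enforces the total circulation to vanish, not each $\omega_i$, so constants differ across $\Gamma_{cs}$ pieces there too). The missing ingredient is a correction $\tilde{\phi}$ with $\tilde{\phi}|_{\Gamma_{cs}^{(i)}}=\kappa_i$, $\partial_n\tilde{\phi}|_\Gamma=0$, and $\|\tilde{\phi}\|_2\leq C|\vec{\kappa}|$, which can be subtracted from $\tilde{v}$ while shifting $\bdd{\psi}$ by $\grad\tilde{\phi}\in\mathbbb{G}_{\Gamma}$; the paper supplies exactly this via an inverse trace construction (\cref{lem:smooth-func-interp-piecewise-constants}, using \cite{Arn88} plus a Stein extension) followed by a Girault--Scott interpolant to land in the discrete space with the boundary conditions preserved. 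Your proposed ``chain of mesh elements'' carrying the potential is not a precise substitute and, as written, would not control the $H^1$-norm of $\tilde{v}$ uniformly in $h$ and $p$. With these two corrections the argument aligns with the paper's \cref{thm:invert-xi}.
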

\Cref{thm:morgan-scott-inf-sup} is an immediate consequence of \cref{thm:invert-xi} below. The quantity $\xi_{\mathcal{T}}$ in \cref{eq:morgan-scott-inf-sup} depends on the topology of the mesh but not the mesh size or polynomial degree. While $\xi_{\mathcal{T}}$ may degenerate on certain mesh configurations, we shall assume that $\xi_{\mathcal{T}}$ is bounded away from zero for present purposes. Further details and an explicit formula for $\xi_{\mathcal{T}}$ can be found in \cite[eq. (A.3)]{AinCP23KirchI}.

The degree $p-1$ Brezzi-Douglas-Marini space \cite{Brezzi85} is defined by
\begin{align*}
	\bdd{BDM}_{\Gamma}^{p-1} :=  \left\{ \bdd{\gamma} \in \hcurlgamma : \bdd{\gamma}|_{K} \in \mathcal{P}_{p-1}(K)^2 \ \forall K \in \mathcal{T} \right\}.
\end{align*} 
Then, $\image \bdd{\Xi}_X$ is contained in the proper subspace of $\bdd{BDM}_{\Gamma}^{p-1}$ consisting of functions whose curl belongs to $\curl \mathbbb{G}_{\Gamma}$ since
\begin{align*}
	\curl \image \bdd{\Xi}_X = \curl \mathbbb{G}_{\Gamma} \implies  \image \bdd{\Xi}_X \subseteq \{ \bdd{\gamma} \in \bdd{BDM}_{\Gamma}^{p-1} : \curl \bdd{\gamma} \in \curl \mathbbb{G}_{\Gamma} \}.
\end{align*}
The above inclusion is in fact an equality. In order to see this, we construct a right inverse for the the $\bdd{\Xi}_X$ operator. To this end, it will be useful to start by considering the curl operator.

\subsection{Right inverse for the curl operator}

Let $\{ \Gamma_f^{(i)} \}_{i=1}^{N_f}$ denote the $N_f \geq 0$ connected components of $\Gamma_f$. Then, the following is a generalization of \cite[Lemma A.3]{AinCP23KirchI}. 
\begin{theorem}
	\label{thm:invert-rot-free-averages}
	Suppose that $\Gamma_f$ has $N_f$ connected components $\{ \Gamma_f^{(i)} \}_{i=1}^{N_f}$. Let
	\begin{align}
		\label{eq:l2gamma-def}
		r \in L^2_{\Gamma}(\Omega) := \{ r \in L^2(\Omega) : (r, 1) = 0 \text{ if } |\Gamma_f| = 0 \}.
	\end{align}
	and $\vec{\omega} \in \mathbb{R}^{N_f}$ satisfy
	\begin{align}
		\label{eq:kappaij-constraint}
		\sum_{i=1}^{N_f} \omega_{i} = \int_{\Omega} r \ d\bdd{x}.
	\end{align}
	Then, there exists $\bdd{\psi} \in \bdd{G}_{\Gamma}(\Omega)$ satisfying
	\begin{align}
		\label{eq:invert-rot-free-averages}
		\curl \bdd{\psi} = r, \quad (\unitvec{t} \cdot \bdd{\psi}, 1)_{\Gamma_f^{(i)}} = \omega_{i}, \quad 1 \leq i \leq N_f,
	\end{align}
	and
	\begin{align*}
		\|\bdd{\psi}\|_{1} \leq C \left( \|r\| + |\vec{\omega}| \right),	
	\end{align*}
	where $|\vec{\omega}| = \sum_{i=1}^{N_f} |\omega_i|$ and $C > 0$ is independent of $r$ and $\vec{\omega}$.
\end{theorem}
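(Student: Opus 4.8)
The plan is to reduce the statement, via the standard two–dimensional identification of the scalar curl with a divergence, to a divergence problem with mixed boundary conditions and prescribed boundary fluxes. For $\bdd{\psi} = (\psi_1,\psi_2)$ write $\bdd{\psi}^{\perp} := (\psi_2,-\psi_1)$, so that $\dive \bdd{\psi}^{\perp} = \curl \bdd{\psi}$ and, on $\Gamma$ with the appropriate orientation of the unit tangent $\unitvec{t}$, $\unitvec{n}\cdot\bdd{\psi}^{\perp} = \unitvec{t}\cdot\bdd{\psi}$ and $\bdd{\pi}_{T}\bdd{\psi} = \bdd{0} \iff \unitvec{n}\cdot\bdd{\psi}^{\perp} = 0$. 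Since $\bdd{\psi}\mapsto\bdd{\psi}^{\perp}$ is a pointwise isometry of $\bdd{H}^1(\Omega)$, setting $\bdd{u} := \bdd{\psi}^{\perp}$ turns \cref{eq:invert-rot-free-averages} into the problem of finding $\bdd{u}\in\bdd{H}^1(\Omega)$ with $\bdd{u}|_{\Gamma_c}=\bdd{0}$, $\unitvec{n}\cdot\bdd{u}|_{\Gamma_s}=0$, $\dive\bdd{u}=r$ in $\Omega$, and $(\unitvec{n}\cdot\bdd{u},1)_{\Gamma_f^{(i)}}=\omega_i$ for $1\le i\le N_f$, with $\|\bdd{u}\|_1\le C(\|r\|+|\vec{\omega}|)$. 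The divergence theorem gives $\int_\Omega\dive\bdd{u}=\sum_{i=1}^{N_f}(\unitvec{n}\cdot\bdd{u},1)_{\Gamma_f^{(i)}}$ for any such $\bdd{u}$, so \cref{eq:kappaij-constraint} is precisely the necessary compatibility condition; when $N_f=0$ it reduces to $\int_\Omega r=0$, i.e.\ to $r\in L^2_{\Gamma}(\Omega)$ \cref{eq:l2gamma-def}. It remains to prove sufficiency.

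I would split $\bdd{u}=\bdd{u}_f+\bdd{u}_0$ into a part carrying the prescribed fluxes and a part with homogeneous trace. First, construct $\bdd{u}_f\in\bdd{H}^1(\Omega)$ with $\bdd{u}_f|_{\Gamma_c}=\bdd{0}$, $\unitvec{n}\cdot\bdd{u}_f|_{\Gamma_s}=0$, and $(\unitvec{n}\cdot\bdd{u}_f,1)_{\Gamma_f^{(i)}}=\omega_i$ for each $i$, with $\|\bdd{u}_f\|_1\le C|\vec{\omega}|$: this is done by an explicit local lifting supported in a collar neighbourhood of each component $\Gamma_f^{(i)}$, multiplied by a cutoff and scaled so that the net normal flux through $\Gamma_f^{(i)}$ equals $\omega_i$ and so that the boundary conditions on $\Gamma_c\cup\Gamma_s$ are preserved. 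By the divergence theorem $\int_\Omega\dive\bdd{u}_f=\sum_i\omega_i$, so by \cref{eq:kappaij-constraint} the datum $\tilde{r}:=r-\dive\bdd{u}_f\in L^2(\Omega)$ has zero mean. Then invoke a classical bounded right inverse of the divergence on the bounded Lipschitz domain $\Omega$ (the Bogovskii operator; see, e.g., \cite{GiraultRaviart86}) to obtain $\bdd{u}_0\in\bdd{H}^1(\Omega)$ with $\bdd{u}_0|_{\Gamma}=\bdd{0}$, $\dive\bdd{u}_0=\tilde{r}$, and $\|\bdd{u}_0\|_1\le C\|\tilde{r}\|\le C(\|r\|+|\vec{\omega}|)$.

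Setting $\bdd{u}:=\bdd{u}_0+\bdd{u}_f$ gives $\dive\bdd{u}=\tilde{r}+\dive\bdd{u}_f=r$; moreover $\bdd{u}$ vanishes on $\Gamma_c$ and has vanishing normal trace on $\Gamma_s$ because each of $\bdd{u}_0,\bdd{u}_f$ does, and $(\unitvec{n}\cdot\bdd{u},1)_{\Gamma_f^{(i)}}=0+\omega_i=\omega_i$ since $\bdd{u}_0$ vanishes on all of $\Gamma$. Rotating back, $\bdd{\psi}:=(-u_2,u_1)$ lies in $\bdd{G}_{\Gamma}(\Omega)$, satisfies \cref{eq:invert-rot-free-averages}, and obeys $\|\bdd{\psi}\|_1=\|\bdd{u}\|_1\le C(\|r\|+|\vec{\omega}|)$, which is the claimed bound. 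I expect the main obstacle to be the construction of the flux carrier $\bdd{u}_f$ with a bound independent of the data: care is needed precisely where a component $\Gamma_f^{(i)}$ abuts $\Gamma_c$ or $\Gamma_s$ (so the cutoff cannot simply be taken to vanish near $\Gamma_c\cup\Gamma_s$), and one must arrange the local lifting so that $\bdd{u}_f\in\bdd{H}^1(\Omega)$ while its full trace still vanishes on $\Gamma_c$ and its normal trace vanishes on $\Gamma_s$. Once this is in place, the uniform constant follows from linearity in $(r,\vec{\omega})$, the finite dimensionality of the flux data, and the boundedness of the divergence right inverse; when $N_f=0$ this step is vacuous and the argument is just the classical right inverse applied to the mean-zero datum $r\in L^2_{\Gamma}(\Omega)$.
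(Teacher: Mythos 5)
Your argument is correct and, modulo presentation, follows essentially the same route as the paper's proof, which simply defers to Lemma A.1 of \cite{AinCP21LE} (a right inverse for the divergence with mixed boundary conditions, obtained by the same rotation device) and Lemma A.3 of \cite{AinCP23KirchI} (the flux-carrier splitting); you have made those cited steps self-contained via the rotation $\bdd{\psi}\mapsto\bdd{\psi}^{\perp}$ and a Bogovskii right inverse on the Lipschitz domain. One minor simplification of the step you flag as delicate: the flux carrier for $\Gamma_f^{(i)}$ can be supported in a small ball centred at a point in the relative interior of $\Gamma_f^{(i)}$, so the cutoff \emph{can} in fact be taken to vanish identically near $\Gamma_c\cup\Gamma_s$; no matching at the junction points is needed, one only arranges nonzero net flux through $\Gamma_f^{(i)}$ and rescales.
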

\begin{proof}
	First note that Lemma A.1 of \cite{AinCP21LE}, while stated for a simply-connected domain $\Omega$, extends to the present case (with the exact same proof). Then, the exact same arguments used for \cite[Lemma A.3]{AinCP23KirchI} completes the proof.
\end{proof}

The following corollary of \cref{thm:invert-rot-free-averages} shows that the operator  $\curl : \mathbbb{G}_{\Gamma} \to \curl \mathbbb{G}_{\Gamma}$ admits a continuous right inverse.
\begin{corollary}
	\label{cor:invert-rot-free-averages-discrete}
	Let $p \geq 5$. Let $r \in \curl \mathbbb{G}_{\Gamma}$ and $\vec{\omega} \in \mathbb{R}^{N_f}$ satisfy \cref{eq:kappaij-constraint}. Then, 	
	there exists $\bdd{\psi} \in \mathbbb{G}_{\Gamma}$ satisfying \cref{eq:invert-rot-free-averages} and
	\begin{align*}
		\|\bdd{\psi}\|_{1} \leq C \left( \xi_{\mathcal{T}}^{-1} \|r\| + |\vec{\omega}| \right),	
	\end{align*}
	where $C > 0$ is independent of $r$, $\vec{\omega}$, $h$, $p$, and $\xi_{\mathcal{T}}$. 
\end{corollary}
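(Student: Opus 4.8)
The plan is to upgrade the continuous right inverse of \cref{thm:invert-rot-free-averages} to a discrete one by composing it with a curl-preserving, boundary-conforming quasi-interpolation onto $\mathbbb{G}_{\Gamma}$, and by handling the prescribed free-boundary averages via a separate low-order correction so that they incur no $\xi_{\mathcal{T}}$ penalty (which is why the $\xi_{\mathcal{T}}^{-1}$ in the statement multiplies only $\|r\|$).

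First I would check the hypotheses of \cref{thm:invert-rot-free-averages}. If $|\Gamma_f| = 0$ then $\Gamma = \Gamma_{cs}$, and for any $\bdd{\zeta} \in \mathbbb{G}_{\Gamma} \subset \bdd{G}_{\Gamma}(\Omega)$ with $\curl\bdd{\zeta} = r$ one has $\unitvec{t}\cdot\bdd{\zeta}|_{\Gamma} = 0$, hence $(r,1) = \langle \unitvec{t}\cdot\bdd{\zeta}, 1\rangle_{\Gamma} = 0$; thus $r \in L^2_{\Gamma}(\Omega)$ \cref{eq:l2gamma-def}, and \cref{eq:kappaij-constraint} is exactly the stated compatibility. \cref{thm:invert-rot-free-averages} then yields $\bdd{\psi}_* \in \bdd{G}_{\Gamma}(\Omega)$ with $\curl\bdd{\psi}_* = r$, $(\unitvec{t}\cdot\bdd{\psi}_*, 1)_{\Gamma_f^{(i)}} = \omega_i$, and $\|\bdd{\psi}_*\|_1 \le C(\|r\| + |\vec{\omega}|)$.

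Second, I would split off the averages. For each component $\Gamma_f^{(i)}$, fix a curl-free field $\bdd{\chi}_i \in \mathbbb{G}_{\Gamma}$ localized near $\Gamma_f^{(i)}$ with $(\unitvec{t}\cdot\bdd{\chi}_i, 1)_{\Gamma_f^{(j)}} = \delta_{ij}$ and $\|\bdd{\chi}_i\|_1 \le C$ uniformly in $h,p$ (prescribing a single tangential average on one boundary component is a geometry-only constraint, so no topological obstruction arises), set $\bdd{\psi}_\omega := \sum_i \omega_i\bdd{\chi}_i$, and reduce to finding $\bdd{\psi}_r \in \mathbbb{G}_{\Gamma}$ with $\curl\bdd{\psi}_r = r$, vanishing free-boundary averages, and $\|\bdd{\psi}_r\|_1 \le C\xi_{\mathcal{T}}^{-1}\|r\|$, after which $\bdd{\psi} := \bdd{\psi}_r + \bdd{\psi}_\omega$ works. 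To obtain $\bdd{\psi}_r$, I would apply to $\bdd{\psi}_* - \bdd{\psi}_\omega$ (still of curl $r$) a quasi-interpolation operator $\Pi_X$ onto $\mathbbb{G}_{\Gamma}$ that is $H^1$-stable, respects the $\bdd{G}_{\Gamma}(\Omega)$-boundary conditions and vanishing free-boundary averages, and preserves tangential edge moments, so that $\curl\Pi_X\bdd{\theta}$ is the $L^2$-projection of $\curl\bdd{\theta}$ onto $\curl\mathbbb{G}_{\Gamma}$ --- which equals $r$ here since $r \in \curl\mathbbb{G}_{\Gamma}$. The factor $\xi_{\mathcal{T}}^{-1}$ enters the stability of $\Pi_X$: forcing a $C^0$, boundary-conforming field to be curl-exact requires distributing the elementwise mean values of $\curl\bdd{\theta}$ consistently across the mesh, i.e. solving a low-order flow problem on the dual graph whose stability constant is exactly $\xi_{\mathcal{T}}^{-1}$, as in \cite{AinCP23KirchI}; the hypothesis $p \ge 5$ is what makes $\tilde{\mathbb{W}}_{\Gamma} \xrightarrow{\grad} \mathbbb{G}_{\Gamma} \xrightarrow{\curl} \curl\mathbbb{G}_{\Gamma}$ behave like a de Rham complex with $p$-robust accompanying polynomial estimates.

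The main obstacle is the construction and the $p$-robust, $\xi_{\mathcal{T}}^{-1}$-controlled stability analysis of the curl-commuting interpolant $\Pi_X$: the local polynomial right inverses of $\curl$ on a reference element must be chosen $p$-uniformly (using weighted/orthogonal polynomial decompositions), and the global patching must simultaneously preserve $C^0$-continuity, the homogeneous boundary conditions, and curl-exactness --- and it is this patching that is governed by $\xi_{\mathcal{T}}$. The remaining ingredients --- the continuous inverse of \cref{thm:invert-rot-free-averages}, the uniform bound on the average-correction fields $\bdd{\chi}_i$, and $p$-robust Scott--Zhang/bubble estimates --- are routine or already available from \cite{AinCP23KirchI}, so the essential task is to adapt that construction and track that the $\xi_{\mathcal{T}}^{-1}$ penalty falls only on the $\|r\|$ term.
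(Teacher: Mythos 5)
Your high-level strategy matches the paper's: start from the continuous right inverse of \cref{thm:invert-rot-free-averages}, pass to $\mathbbb{G}_{\Gamma}$ using the $p$-robust discrete curl machinery of \cite{AinCP21LE} (extended to non-simply-connected polygons) together with the argument of \cite[Lemma A.4]{AinCP23KirchI}, and track the $\xi_{\mathcal{T}}^{-1}$ stability. In fact the paper's own proof is a two-line deferral to exactly those references, so your proposal is considerably more detailed than what the paper records.

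There is, however, a genuine flaw in the decomposition you propose. Curl-free fields $\bdd{\chi}_i \in \mathbbb{G}_{\Gamma}$ with $(\unitvec{t}\cdot\bdd{\chi}_i,1)_{\Gamma_f^{(j)}} = \delta_{ij}$ for $1 \le i \le N_f$ cannot exist. Every $\bdd{\chi}_i \in \bdd{G}_{\Gamma}(\Omega)$ has vanishing tangential trace on $\Gamma_{cs}$, so Green's theorem gives $\int_\Omega \curl\bdd{\chi}_i \, d\bdd{x} = \sum_{j=1}^{N_f} (\unitvec{t}\cdot\bdd{\chi}_i, 1)_{\Gamma_f^{(j)}}$; if $\curl\bdd{\chi}_i \equiv 0$ this forces the right-hand side to vanish, contradicting $\sum_j \delta_{ij} = 1$. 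For the same reason, a field $\bdd{\psi}_r$ with $\curl\bdd{\psi}_r = r$ and \emph{vanishing} free-boundary averages cannot exist unless $\int_\Omega r \, d\bdd{x} = 0$, which the compatibility condition \cref{eq:kappaij-constraint} does not guarantee. The underlying issue is that \cref{eq:kappaij-constraint} removes one degree of freedom from $\vec{\omega}$, leaving only $N_f - 1$ independent average constraints --- this is precisely why the paper consistently works with $\mathbb{R}^{N_f-1}$ rather than $\mathbb{R}^{N_f}$ in \ref{hp:gradient-infsup-cont} and \ref{hp:gradient-infsup}. A curl-free correction basis can therefore carry at most $N_f - 1$ of the averages, with the remaining one determined automatically by Green's theorem; alternatively, one can keep $\bdd{\psi}_*$ intact and interpolate it directly without attempting to split off the averages. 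The rest of your sketch (the $p$-robust curl-preserving interpolant with $\xi_{\mathcal{T}}^{-1}$ stability) is plausible and consistent with what \cite{AinCP23KirchI,AinCP21LE} provide, but the split $\bdd{\psi} = \bdd{\psi}_r + \bdd{\psi}_\omega$ as you have written it is internally inconsistent.
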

\begin{proof}
	First note that Corollary 5.1 of \cite{AinCP21LE}, stated for simply-connected $\Omega$, also holds for a connected Lipschitz polygon with the exact same proof. The result now follows from the exact same arguments as in the proof of \cite[Lemma A.4]{AinCP23KirchI} using \cref{thm:invert-rot-free-averages} in place of \cite[Lemma A.3]{AinCP23KirchI}.
\end{proof}

\subsection{\texorpdfstring{Right inverse for the $\bdd{\Xi}_X$ operator}{Right inverse for the $\Xi_X$ operator}}

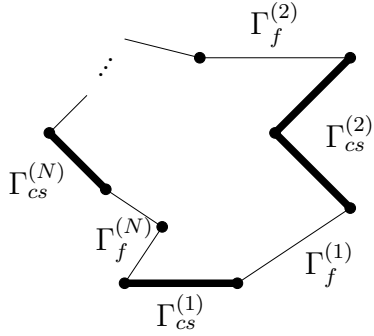
\begin{figure}[htb]
	\centering
	\begin{tikzpicture}	[scale=0.5]	
		\coordinate (A1) at (-8, -7);
		\coordinate (A2) at (-5, -7);
		\coordinate (A3) at (-2, -5);
		\coordinate (A4) at (-4, -3);
		\coordinate (A5) at (-2, -1);
		\coordinate (A6) at (-6, -1);
		
		\coordinate (P1) at (-8, -0.5);

		\coordinate (B1) at (-7, -5.5);
		\coordinate (B2) at (-8.5, -4.5);
		\coordinate (B3) at (-10, -3);
		
		\coordinate (P2) at (-9, -2);

		\draw[line width=1mm] (A1) -- (A2);
		\draw[line width=1mm] (A3) -- (A4) -- (A5);
		\draw[line width=1mm] (B2) -- (B3);
		\draw[line width=0.1mm] (A2) -- (A3);
		\draw[line width=0.1mm] (A5) -- (A6);
		\draw[line width=0.1mm] (A1) -- (B1) -- (B2);
		\draw[line width=0.1mm] (A6) -- (P1);
		\draw[line width=0.1mm] (B3) -- (P2);
		\draw[draw=none] (P1) -- (P2) node[pos=0.5, sloped]{...};

		\filldraw (A1) circle (4pt);
		\filldraw (A2) circle (4pt);	
		\filldraw (A3) circle (4pt);
		\filldraw (A4) circle (4pt);
		\filldraw (A5) circle (4pt);
		\filldraw (A6) circle (4pt);
		
		\filldraw (B1) circle (4pt);
		\filldraw (B2) circle (4pt);
		\filldraw (B3) circle (4pt);
		
		\draw ($($(A1)!0.5!(A2)$) + (0, 0) $) node[align=center,below]{$\Gamma_{cs}^{(1)}$};
		
		\draw ($($(A2)!0.5!(A3)$) + (0, -0.5) $) node[align=center,right]{$\Gamma_{f}^{(1)}$};
		
		\draw ($(A4) +(2, 0)$) node[align=center]{$\Gamma_{cs}^{(2)}$};
		
		\draw ($($(A5)!0.5!(A6)$) + (0, 0) $) node[align=center,above]{$\Gamma_{f}^{(2)}$};
		
		\draw ($(B1) +(-1, -0.35)$) node[align=center]{$\Gamma_{f}^{(N)}$};
		
		\draw ($($(B2)!0.5!(B3)$) + (0, -0.5) $) node[align=center,left]{$\Gamma_{cs}^{(N)}$};
	\end{tikzpicture}
	\caption{Ordering of $\Gamma_{cs}^{(i)}$ and $\Gamma_f^{(i)}$, $i = 1,\ldots,N$ for simply-connected $\Omega \subset \mathbb{R}^2$.}
	\label{fig:domain-example-bcs}
\end{figure}

We now turn to the construction of a right inverse of the $\bdd{\Xi}_X$ operator. The first result is a type of inverse trace theorem.
\begin{lemma}
	\label{lem:smooth-func-interp-piecewise-constants}
	Let $\{ \Gamma_{cs}^{(i)} \}_{i=1}^{N_{cs}}$ denote the $N_{cs}$ connected components of $\Gamma_{cs}$. For every $\vec{\kappa} \in \mathbb{R}^{N_{cs}}$, there exists $w \in H^2(\Omega)$ satisfying
	\begin{align}
		\label{eq:smooth-func-interp-piecewise-constants-cont}
		w|_{\Gamma_{cs}^{(i)}} = \kappa_{i}, \quad 1 \leq j \leq N_{cs}, \quad \partial_n w|_{\Gamma} = 0, \quad \text{and} \quad \|w\|_{2} \leq C |\vec{\kappa}|,
	\end{align}
	where $C$ is independent of $\vec{\kappa}$.
\end{lemma}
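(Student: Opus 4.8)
The plan is to reduce the statement to constructing, for each $i$, a fixed function $\chi_i\in C^{\infty}(\bar{\Omega})\subset H^2(\Omega)$, independent of $\vec{\kappa}$, with $\chi_i|_{\Gamma_{cs}^{(j)}}=\delta_{ij}$ for all $1\le j\le N_{cs}$ and $\partial_n\chi_i|_{\Gamma}=0$, and then to set $w:=\sum_{i=1}^{N_{cs}}\kappa_i\chi_i$. Linearity then gives $w|_{\Gamma_{cs}^{(j)}}=\kappa_j$, $\partial_n w|_{\Gamma}=0$, and $\|w\|_2\le\sum_i|\kappa_i|\,\|\chi_i\|_2\le(\max_i\|\chi_i\|_2)\,|\vec{\kappa}|$, which is the claim with $C:=\max_i\|\chi_i\|_2$.

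To build $\chi_i$ I would first prescribe its boundary trace. Since the components $\{\Gamma_{cs}^{(j)}\}$ are pairwise disjoint compact subsets of $\Gamma$ separated by positive distance and $\Omega$ has finitely many vertices, one can choose $g_i$ that is smooth as a function of arclength along each Jordan-curve component of $\Gamma$, equals $\delta_{ij}$ on $\Gamma_{cs}^{(j)}$, and --- this is the key point --- is constant on a relatively open neighbourhood in $\Gamma$ of every vertex of $\Omega$. This is possible because any transition of $g_i$ between the values $1$ and $0$ can be placed in the relative interior of a boundary edge: distinct components of $\Gamma_{cs}$ are always separated, along a Jordan curve, by a stretch of $\Gamma_f$ of positive length, and at a vertex one can simply let $g_i$ retain the value it already carries on an incident edge.

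Next I would cover $\bar{\Omega}$ by finitely many charts --- interior charts disjoint from $\Gamma$; edge charts that are half-disks about relative-interior points of edges, carrying coordinates $(s,t)$ with $t$ the distance into $\Omega$ and $\{t=0\}\subset\Gamma$; and corner charts that are circular sectors about the vertices, small enough that $g_i$ is constant on each sector's trace on $\Gamma$ --- and fix a smooth partition of unity $\{\rho_\alpha\}$ on $\bar{\Omega}$ subordinate to it. Put $\chi_i^{(\alpha)}:=0$ on interior charts, $\chi_i^{(\alpha)}(s,t):=g_i(s)$ on edge charts, and $\chi_i^{(\alpha)}:=c_P$ (the local constant value of $g_i$) on the corner chart at a vertex $P$; each $\chi_i^{(\alpha)}$ is smooth, and on a chart meeting $\Gamma$ it agrees with $g_i$ on $\Gamma$ and has $\partial_n\chi_i^{(\alpha)}|_{\Gamma}=0$ (being constant in $t$, respectively constant), while on interior charts $\rho_\alpha$ vanishes near $\Gamma$. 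Setting $\chi_i:=\sum_\alpha\rho_\alpha\chi_i^{(\alpha)}$, a finite sum, gives $\chi_i\in C^{\infty}(\bar{\Omega})$ with $\|\chi_i\|_2$ bounded by a constant that depends only on the charts, the partition of unity, and $g_i$; $\chi_i|_{\Gamma}=(\sum_\alpha\rho_\alpha)g_i=g_i$; and, restricting $\nabla\chi_i=\sum_\alpha(\chi_i^{(\alpha)}\nabla\rho_\alpha+\rho_\alpha\nabla\chi_i^{(\alpha)})$ to $\Gamma$,
\begin{align*}
	\partial_n\chi_i|_{\Gamma}=\sum_\alpha\big[(\partial_n\rho_\alpha)\,g_i+\rho_\alpha\cdot 0\big]=g_i\,\partial_n\Big(\sum_\alpha\rho_\alpha\Big)=g_i\,\partial_n(1)=0 .
\end{align*}

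The one genuinely delicate point is the behaviour at the vertices of $\Omega$: extending an arbitrary non-constant trace into a corner sector need not produce an $H^2$ function (the corner singular exponents $r^{\nu}$ intervene), which is exactly why I arrange $g_i$, and hence every $\chi_i^{(\alpha)}$, to be locally constant near each vertex, so that the corner case collapses to extension by a constant; the remaining steps are routine bookkeeping. (Alternatively one could invoke a bounded right inverse of the trace map $w\mapsto(w|_{\Gamma},\partial_n w|_{\Gamma})$ applied to the pair $(g_i,0)$: since $g_i$ is locally constant near the vertices and the Neumann datum is zero, all corner compatibility conditions hold trivially. The partition-of-unity construction is, however, self-contained.)
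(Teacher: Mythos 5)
Your proof is correct, and it takes a genuinely different route from the paper. Both arguments start from the same observation — that the boundary trace must be chosen to be locally constant near every vertex of the polygon, with smooth transitions confined to the interiors of edges of $\Gamma_f$ — but they then diverge. The paper constructs such a trace $f$ explicitly and invokes the Arnold--Scott--Vogelius trace theorem (their citation [Arn88, Theorem~6.1]) to obtain a bounded lift to $H^2(\Omega)$ with prescribed trace $f$ and vanishing normal derivative; the multiply-connected case is then handled by a separate cutoff argument: apply the simply-connected result on each domain $\Omega_0,\ldots,\Omega_{N_H}$, use Stein's extension theorem, multiply by smooth cutoffs $\zeta_i$ and sum. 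You instead build the lift by hand via a partition of unity subordinate to interior, edge and corner charts, defining the local pieces $\chi_i^{(\alpha)}$ so that they automatically satisfy the Neumann condition and are smooth because the corner case collapses to extension by a constant. What your approach buys is self-containedness (no appeal to the ASV theorem or Stein extension) and a construction that works uniformly in the domain topology, since the chart cover does not care about the number of holes; what the paper's approach buys is brevity and a one-line reduction to a standard trace theorem. One presentational remark: your basis $\{\chi_i\}$ factorizes the linearity of the construction in $\vec\kappa$ cleanly, which the paper leaves implicit; this is a minor but genuine simplification of the bookkeeping.
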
	
\begin{proof}
	Consider first the case where $\Omega$ is simply-connected so that $N_f = N_{cs}$ and label the components $\{\Gamma_f^{(i)}\}$ and $\{\Gamma_{cs}^{(i)}\}$ in a counterclockwise orientation so that $\Gamma_f^{(i)}$ is located between $\Gamma_{cs}^{(i)}$ and $\Gamma_{cs}^{(i+1)}$, $i = 1,2,\ldots, N_f$, where $\Gamma_{cs}^{(N_f+1)} := \Gamma_{cs}^{(1)}$ ; see \cref{fig:domain-example-bcs}. Define $f \in L^2(\Gamma)$ as follows: $f$ is piecewise constant on $\Gamma$, taking the value $\kappa_i$ on $\Gamma_{cs}^{(i)}$ and $\kappa_{i+1}$ on $\Gamma_f^{(i)}$, where $\kappa_{N_{cs}+1} := \kappa_1$, except on the $N_f$ edges laying on $\Gamma_f^{(i)}$ that have a vertex at $\bar{\Gamma}_f^{(i)} \cap \bar{\Gamma}_{cs}^{(i)}$, where $f$ is a smooth transition from $\kappa_i$ to $\kappa_{i+1}$ with vanishing tangential derivatives at the endpoints of the edges.
	
	Note that by construction $f$ is continuous, $f|_{\gamma} \in C^{\infty}(\gamma)$ for all edges $\gamma$ of $\Gamma$, and the tangential derivative of $f$ vanishes at all vertices of $\Gamma$. Thanks to \cite[Theorem 6.1]{Arn88}, there exists $w \in H^2(\Omega)$ with
	\begin{align*}
		w|_{\Gamma} = f, \quad \partial_n w|_{\Gamma} = 0, \quad \text{and} \quad \|w\|_{2} \leq C \left( \|f\|_{\Gamma} + \|f' \unitvec{t} \|_{\frac{1}{2}, \Gamma} \right) \leq C |\vec{\kappa}|.
	\end{align*}

	We now return to the more general case where $\Omega$ satisfies the conditions in \cref{sec:gen-problem-setting}. Applying the above argument on each $\Omega_i$, we obtain $w_i \in H^2(\Omega_i)$, $0 \leq i \leq N_H$, satisfying 
	\begin{align*}
		w_i|_{\Gamma_{cs}^{(j)}} = \kappa_j \quad \text{if $|\Gamma_{cs}^{(j)} \cap \partial \Omega_i| > 0$}, \quad \partial_n w|_{\partial \Omega_i} = 0, \quad \text{and} \quad \|w_i\|_{2} \leq C  |\vec{\kappa}|,
	\end{align*}
	for all $1 \leq j \leq N_{cs}$. Now, for $1 \leq i \leq N_H$, \cite{Stein70} shows there exists an extension $\tilde{w}_i \in H^2(\mathbb{R}^2)$ satisfying $\tilde{w}_i = w_i$ on $\Omega_i$ and $\|\tilde{w}_i\|_{2, \mathbb{R}^2} \leq C \| w_i \|_{2, \Omega_i}$. Moreover, for $1 \leq i \leq N_{H}$, there exist $\zeta_{i} \in C^{\infty}(\mathbb{R}^2)$ satisfying $\zeta_i \equiv 1$ on $\Omega_i$ and $\zeta_i \equiv 0$ on $\bar{\Omega}_j$ for $1 \leq j \leq N_H$, $i \neq j$. Finally, there exists $\zeta_0 \in C^{\infty}(\mathbb{R}^2)$ satisfying $\zeta_0 \equiv 1$ on $\partial \Omega_0$ and $\zeta_0 \equiv 0$ on $\Omega_i$ for $1 \leq i \leq N_H$. The function $w := \sum_{i=0}^{N_H} \zeta_i \tilde{w}_i$ then satisfies \cref{eq:smooth-func-interp-piecewise-constants-cont}.
\end{proof}

\noindent The next result shows the existence of a continuous right inverse of the operator $\bdd{\Xi}_X$. 
\begin{theorem}
	\label{thm:invert-xi}
	Let $p \geq 5$. For every $\bdd{\eta} \in \{ \bdd{\gamma} \in \bdd{BDM}_{\Gamma}^{p-1} : \curl \bdd{\gamma} \in \curl \mathbbb{G}_{\Gamma} \}$, there exists $\tilde{v} \in \tilde{\mathbb{W}}_{\Gamma}$ and $\bdd{\psi} \in \mathbbb{G}_{\Gamma}$ satisfying 
	\begin{align}
		\label{eq:invert-xi}
		\bdd{\Xi}_X(\tilde{v}, \bdd{\psi}) = \bdd{\eta}  \quad \text{and} \quad \|\tilde{v}\|_1 + \|\bdd{\psi}\|_{1} \leq C \xi_{\mathcal{T}}^{-1} \| \bdd{\eta}\|_{\curl},
	\end{align}	
	where $C$ is independent of $\bdd{\eta}$, $\xi_{\mathcal{T}}$, $h$, and $p$.
	Consequently, $\image \bdd{\Xi}_X = \{ \bdd{\gamma} \in \bdd{BDM}_{\Gamma}^{p-1} : \curl \bdd{\gamma} \in \curl \mathbbb{G}_{\Gamma} \}$ and \cref{eq:morgan-scott-inf-sup} holds with $\beta_0 = C^{-1} \xi_{\mathcal{T}}$.
\end{theorem}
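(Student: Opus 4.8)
Write $\mathbb{V} := \{ \bdd{\gamma} \in \bdd{BDM}_{\Gamma}^{p-1} : \curl \bdd{\gamma} \in \curl \mathbbb{G}_{\Gamma} \}$. The plan is to build an explicit bounded right inverse of $\bdd{\Xi}_X$ on $\mathbb{V}$ and then to deduce \cref{eq:morgan-scott-inf-sup} from the stability estimate \cref{eq:invert-xi} exactly as the inf-sup condition was obtained from a right inverse in \cref{lem:invert-xi-inf-sup}. Fix $\bdd{\eta} \in \mathbb{V}$. Since $\bdd{\Xi}_X(\tilde{v}, \bdd{\psi}) = \grad \tilde{v} - \bdd{\psi}$ and $\curl \grad \equiv 0$, I first strip off the curl. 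Set $r := -\curl \bdd{\eta} \in \curl \mathbbb{G}_{\Gamma}$ and choose tangential fluxes $\vec{\omega} \in \mathbb{R}^{N_f}$ on the components of $\Gamma_f$ that satisfy the compatibility condition of \cref{cor:invert-rot-free-averages-discrete} and are, in addition, arranged so that the circulation around each hole of $\Omega$ of the field $\bdd{\eta} + \bdd{\psi}_1$ produced below will vanish: on each connected component of $\Gamma$ carrying part of $\Gamma_f$ it suffices to take a single $\omega_i$ nonzero, equal to minus the integral of $\Tr \bdd{\eta}$ over that component, which is bounded by $C \| \bdd{\eta} \|_{\curl}$ with $C$ depending only on $\Omega$ because the tangential trace is bounded $\hcurl \to H^{-1/2}(\Gamma)$. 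Invoking \cref{cor:invert-rot-free-averages-discrete} (where $p \geq 5$ is used) yields $\bdd{\psi}_1 \in \mathbbb{G}_{\Gamma}$ with $\curl \bdd{\psi}_1 = r$, the prescribed fluxes on the components of $\Gamma_f$, and $\| \bdd{\psi}_1 \|_1 \leq C(\xi_{\mathcal{T}}^{-1} \| r \| + |\vec{\omega}|) \leq C \xi_{\mathcal{T}}^{-1} \| \bdd{\eta} \|_{\curl}$.

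Next I recover a potential. Put $\bdd{\zeta} := \bdd{\eta} + \bdd{\psi}_1 \in \hcurlgamma$; it is elementwise in $\mathcal{P}_{p-1}$, is curl-free, and has vanishing circulation around every hole of $\Omega$ by the choice of $\vec{\omega}$, so $\bdd{\zeta} = \grad v_0$ for a single-valued $v_0 \in H^1(\Omega)$. Being in $H^1(\Omega)$ and elementwise polynomial, $v_0$ is continuous and elementwise in $\mathcal{P}_p$, hence $v_0 \in \tilde{\mathbb{W}}$; normalizing $\int_\Omega v_0 = 0$ gives $\| v_0 \|_1 \leq C \| \bdd{\zeta} \| \leq C \xi_{\mathcal{T}}^{-1} \| \bdd{\eta} \|_{\curl}$. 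Since $\bdd{\pi}_T \bdd{\zeta} = \bdd{0}$ on $\Gamma_{cs}$, $v_0$ is constant on each component $\Gamma_{cs}^{(i)}$, say equal to $\kappa_i$, with $|\vec{\kappa}| \leq C \| v_0 \|_1$ by the trace theorem. To kill these constants I apply \cref{lem:smooth-func-interp-piecewise-constants} with data $-\vec{\kappa}$, obtaining $w^\star \in H^2(\Omega)$ with $w^\star|_{\Gamma_{cs}^{(i)}} = -\kappa_i$, $\partial_n w^\star|_\Gamma = 0$, and $\| w^\star \|_2 \leq C |\vec{\kappa}|$, and then pass to the Morgan-Scott space via the local interpolation operator available for $p \geq 5$ to obtain $\chi \in \mathbb{W}$ with the same boundary behaviour and $\| \chi \|_2 \leq C |\vec{\kappa}| \leq C \xi_{\mathcal{T}}^{-1} \| \bdd{\eta} \|_{\curl}$, $C$ independent of $h$ and $p$. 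As $\chi \in \mathbb{W} \subset C^1(\Omega)$, the field $\grad \chi$ is continuous and elementwise in $\mathcal{P}_{p-1}$, and the facts that $\chi$ is constant on each component of $\Gamma_{cs}$ and $\partial_n \chi = 0$ on $\Gamma_c$ force $\grad \chi \in \bdd{G}_{\Gamma}(\Omega)$; thus $\grad \chi \in \mathbbb{G}_{\Gamma}$.

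Now set $\tilde{v} := v_0 + \chi$ and $\bdd{\psi} := \bdd{\psi}_1 + \grad \chi$. Then $\tilde{v} \in \tilde{\mathbb{W}}_{\Gamma}$ because it vanishes on $\Gamma_{cs}$, $\bdd{\psi} \in \mathbbb{G}_{\Gamma}$, and $\bdd{\Xi}_X(\tilde{v}, \bdd{\psi}) = \grad v_0 - \bdd{\psi}_1 = \bdd{\zeta} - \bdd{\psi}_1 = \bdd{\eta}$, while adding the estimates above gives $\| \tilde{v} \|_1 + \| \bdd{\psi} \|_1 \leq C \xi_{\mathcal{T}}^{-1} \| \bdd{\eta} \|_{\curl}$, which is \cref{eq:invert-xi}. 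Together with the reverse inclusion $\image \bdd{\Xi}_X \subseteq \mathbb{V}$ already recorded before the theorem, this gives $\image \bdd{\Xi}_X = \mathbb{V}$. Finally, testing the inf-sup quotient \cref{eq:inf-sup-xi} (with $\dinner{\cdot}{\cdot} = (\cdot,\cdot)_{\curl}$, so that $\tnorm{\bdd{\eta}} = \| \bdd{\eta} \|_{\curl}$) against the right inverse just constructed, as in the proof of \cref{lem:invert-xi-inf-sup}, bounds $\beta_X$ from below by $C^{-1} \xi_{\mathcal{T}}$, i.e. \cref{eq:morgan-scott-inf-sup} holds.

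I expect the correction step (the construction of $\chi$) to be the main obstacle: one must produce an element of $\mathbb{W}$ that reproduces the boundary data of $w^\star$ exactly and satisfies $\| \chi \|_2 \leq C |\vec{\kappa}|$ with $C$ independent of both $h$ and $p$, and this is exactly the regime $p \geq 5$ in which $\mathbb{W}$ has a local basis and hence a local interpolation operator. A convenient reduction is that the constants to be corrected are indexed by the finitely many (topologically determined) components of $\Gamma_{cs}$, so it is enough to interpolate finitely many fixed smooth functions into $\mathbb{W}$ and to know that their interpolants are uniformly bounded in $H^2$, which follows from the $hp$-approximation properties of the Morgan-Scott space. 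A second, more routine, delicate point is the bookkeeping in the first step: $\vec{\omega}$ must be chosen so that the global compatibility condition and the vanishing of every hole-circulation of $\bdd{\eta} + \bdd{\psi}_1$ hold at once while $|\vec{\omega}| \leq C \| \bdd{\eta} \|_{\curl}$, which works because each constraint is an integral of $\Tr \bdd{\eta}$ over a full connected component of $\Gamma$, but needs some care when several components of $\Gamma_f$ sit on one component of $\partial \Omega$.
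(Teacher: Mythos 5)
Your proposal follows essentially the same route as the paper's own proof: kill the curl and the boundary circulations of $\bdd{\eta}$ by appealing to Corollary~\ref{cor:invert-rot-free-averages-discrete}, recover an $H^1$ piecewise polynomial potential which is automatically continuous and of degree $p$, observe that this potential is locally constant on the components of $\Gamma_{cs}$, and then eliminate those constants by subtracting a $C^1$ correction obtained from Lemma~\ref{lem:smooth-func-interp-piecewise-constants} composed with a boundary-preserving interpolant into the Morgan--Scott space. The algebra and the bookkeeping for $\vec{\omega}$ both check out, and your identification of the interpolation step as the crux of the matter is exactly right.

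The one place where your writeup is looser than the paper is that interpolation step. You phrase it as "the local interpolation operator available for $p \geq 5$" and then sketch a reduction to interpolating finitely many fixed smooth functions into $\mathbb{W}^p$ with bounds that you attribute to "$hp$-approximation properties." The paper's resolution is cleaner: it takes $\tilde{\phi}$ to be the piecewise degree-$5$ Girault--Scott interpolant \cite{Girault02} of $\phi$, which lands in the fixed space $\mathbb{W}^5 \subseteq \mathbb{W}^p$ for all $p \geq 5$, preserves $\phi|_{\Gamma_{cs}}$ and $\partial_n\phi|_{\Gamma}$ exactly, and is $H^2$-stable with a constant depending only on shape regularity. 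Because the interpolant has fixed degree, $p$-independence is automatic, and there is no need to invoke any $hp$-convergence theory or to track constants for a degree-$p$ operator. Your "finitely many functions" reduction is a valid device but is not needed once the fixed low-degree interpolant is used, and relying on $hp$-approximation alone does not obviously give an operator that matches the boundary trace and normal derivative \emph{exactly}, which is what the construction requires.
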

\begin{proof}
	Let $\bdd{\eta} \in \{ \bdd{\gamma} \in \bdd{BDM}_{\Gamma}^{p-1} : \curl \bdd{\gamma} \in \curl \mathbbb{G}_{\Gamma} \}$ be given, and let $\{ \Gamma^{(j)} \}_{j=1}^{J}$ denote the connected components of $\Gamma$. Let $r = \curl \bdd{\eta} \in \curl \mathbbb{G}_{\Gamma}$ and let $\vec{\omega} \in \mathbb{R}^{N_f}$ be any vector satisfying
	\begin{align*}
		\sum_{\substack{1 \leq i \leq N_f \\ \Gamma_f^{(i)} \subseteq \Gamma^{(j)}}} \omega_i = \langle \unitvec{t} \cdot \bdd{\eta}, 1 \rangle_{\Gamma^{(j)}}, \quad 1 \leq j \leq J, \quad \text{and} \quad |\vec{\omega}| \leq C \sum_{j=0}^{J} |\langle \unitvec{t} \cdot \bdd{\eta}, 1 \rangle_{\Gamma^{(j)}}|.
	\end{align*}
	Then, the pair $(r, \vec{\omega})$ satisfies 
	\begin{align*}
		\sum_{i=1}^{N_f} \omega_i = \sum_{j=0}^{J} \langle \unitvec{t} \cdot \bdd{\eta}, 1 \rangle_{\Gamma^{(j)}} = \langle \unitvec{t} \cdot \bdd{\eta}, 1 \rangle_{\Gamma} = \curl \bdd{\eta} = r \quad \text{and} \quad \|r\| + |\vec{\omega}| \leq C \|\bdd{\eta}\|_{\curl}, 
	\end{align*}
	where we used trace theorem to conclude $|\vec{\omega}| \leq C \| \bdd{\eta}\|_{\curl}$. Applying \cref{cor:invert-rot-free-averages-discrete} then shows that there exists $\bdd{\theta} \in \mathbbb{G}_{\Gamma}$ satisfying
	\begin{align*}
		\curl (\bdd{\theta} + \bdd{\eta}) = 0, \quad \langle \unitvec{t} \cdot (\bdd{\theta} + \bdd{\eta}), 1\rangle_{\Gamma^{(j)}} = 0, \quad 1 \leq j \leq J, \quad \text{and} \quad \|\bdd{\theta}\|_{1} \leq C \xi_{\mathcal{T}}^{-1} \| \bdd{\eta}\|_{\curl}.
	\end{align*}

	Thanks to \cite[p. 37 Theorem 3.1]{GiraultRaviart86}, there exists $\tilde{w} \in H^1(\Omega)$ satisfying $\grad \tilde{w} = \bdd{\theta} + \bdd{\eta}$. First note that $\grad \tilde{w} \in \bdd{BDM}_{\Gamma}^{p-1}$, and so
	\begin{align*}
		\tilde{w} \in \{ v \in H^1(\Omega) : v|_{K} \in \mathcal{P}_{p}(K) \ \forall K \in \mathcal{T} \} = \{ v \in C(\Omega) : v|_{K} \in \mathcal{P}_{p}(K) \ \forall K \in \mathcal{T} \}.
	\end{align*}	
	Moreover,
	\begin{align*}
		\langle \partial_t \tilde{w}, u\rangle_{\Gamma} =  \langle \unitvec{t} \cdot (\bdd{\theta} + \bdd{\eta}), u \rangle_{\Gamma} = 0 \qquad \forall u \in H^1(\Omega) : u|_{\Gamma_f} = 0,
	\end{align*}
	and so $\tilde{w}|_{\Gamma_{cs}^{(i)}} \in \mathbb{R}$ for $1 \leq i \leq N_{cs}$. The function $\tilde{u} := \tilde{w} - \tilde{w}|_{\Gamma_{cs}^{(1)}}$ then satisfies
	\begin{align*}
		\| \tilde{u} \|_{1} \leq C \|\grad \tilde{u} \| = C \| \grad \tilde{w} \| \leq C \xi_{\mathcal{T}}^{-1} \|\bdd{\eta}\|_{\curl}.
	\end{align*}
	
	To correct the trace of $\tilde{u}$, we appeal to \cref{lem:smooth-func-interp-piecewise-constants} for the existence of $\phi \in H^{2}(\Omega)$ satisfying
	\begin{align*}
		\phi|_{\Gamma_{cs}} = \tilde{u}|_{\Gamma_{cs}}, \quad \partial_n \phi|_{\Gamma} = 0, \quad \text{and} \quad \|\phi\|_{2} \leq C \sum_{i=1}^{N} | \tilde{u}|_{\Gamma_{cs}^{(i)}} | \leq C \|\tilde{u}\|_{1} \leq C \xi_{\mathcal{T}}^{-1} \|\bdd{\eta}\|_{\curl}.
	\end{align*}
	Let $\tilde{\phi} \in W^5$ denote the piecewise degree 5 Girault-Scott interpolant \cite{Girault02} of $\phi$ which satisfies
	\begin{align*}
		\tilde{\phi}|_{\Gamma_{cs}} = \phi|_{\Gamma_{cs}}, \quad \partial_n \tilde{\phi}|_{\Gamma} = 0, \quad \text{and} \quad \|\tilde{\phi}\|_{2} \leq C \|\phi\|_{2} \leq C \xi_{\mathcal{T}}^{-1} \|\bdd{\eta}\|_{\curl}.
	\end{align*}
	The functions $\tilde{v} = \tilde{u} - \tilde{\phi}$ and $\bdd{\psi} = \bdd{\theta} - \grad \tilde{\phi}$ then satisfy $\tilde{v} \in \tilde{\mathbb{W}}_{\Gamma}$, $\bdd{\psi} \in \mathbbb{G}_{\Gamma}$, and \cref{eq:invert-xi}. Consequently, $\{ \bdd{\gamma} \in \bdd{BDM}_{\Gamma}^{p-1} : \curl \bdd{\gamma} \in \curl \mathbbb{G}_{\Gamma} \} \subseteq \image \bdd{\Xi}_X$, and so $\image \bdd{\Xi}_X = \{ \bdd{\gamma} \in \bdd{BDM}_{\Gamma}^{p-1} : \curl \bdd{\gamma} \in \curl \mathbbb{G}_{\Gamma} \}$. The bound $\beta_X \geq C^{-1} \xi_{\mathcal{T}}$ then follows from \cref{eq:invert-xi}.
\end{proof}

\section{Relationship to previous work}
\label{sec:prev-work}

In previous work \cite{AinCP23KirchI}, we considered the special case that $d=2$, $c(\cdot,\cdot) \equiv 0$ for which the mixed problem \cref{eq:h2-mixed-nodual} can be written in an alternative form that avoids the vector-valued space $\hcurlgamma$. In this section, we show that the current method can be regarded as a generalization of the method in \cite{AinCP23KirchI} to the cases $d \in \{2,3\}$ and where $c(\cdot,\cdot)$ need not vanish. We begin with two properties of the space $\bdd{G}_{\Gamma}(\Omega)$. The first is an immediate consequence of \cref{thm:invert-rot-free-averages}: 
\begin{description}
	\item[(P1)\label{hp:gradient-infsup-cont}] For all $r \in L^2_{\Gamma}(\Omega)$ and $\vec{\kappa} \in \mathbb{R}^{N_f - 1}$, there exists $\bdd{\psi} \in \bdd{G}_{\Gamma}(\Omega)$ satisfying $\curl \bdd{\psi} = r$, $(\unitvec{t} \cdot \bdd{\psi}, 1)_{\Gamma_f^{(i)}} = \kappa_i$, $1 \leq i \leq N_f - 1$, and $\|\bdd{\psi}\|_1 \leq C(\|r\| + |\vec{\kappa}|)$,
	where $L^2_{\Gamma}(\Omega)$ is defined as in \cref{eq:l2gamma-def}.
\end{description}

\vspace{0.25em}

\noindent The second property, which follows from \cite[p. 37, Theorem 3.1]{GiraultRaviart86}, characterizes the gradients of $H^2_{\Gamma}(\Omega)$ functions in terms of curl-free vector-fields in $\bdd{G}_{\Gamma}(\Omega)$: 
\begin{description}
	\item[(P2)\label{hp:gradient-exactness-cont}]  $\grad H^2_{\Gamma}(\Omega) = \{ \bdd{\gamma} \in \bdd{G}_{\Gamma}(\Omega) : \curl \bdd{\gamma} \equiv 0 \text{ and } (\unitvec{t} \cdot \bdd{\gamma}, 1)_{\Gamma_f^{(i)}} = 0, \ 1 \leq i \leq N_f-1  \}$.
\end{description}

\vspace{0.25em}

\noindent With these properties in hand, we have the following representation result:
\begin{lemma}
	\label{lem:grad-tildew-perp-reisz-rep-cont}
	For every pair $(r, \vec{\kappa}) \in L^2_{\Gamma}(\Omega) \times \mathbb{R}^{N_f-1}$, there exists a unique vector-field
	\begin{align}
		\label{eq:grad-tildew-perp-cont}
		\bdd{\zeta} \in	(\grad H^1_{\Gamma}(\Omega))^{\perp} := \{ \bdd{\phi} \in \image \bdd{\Xi} : (\bdd{\phi}, \grad \tilde{v})_{\curl} = 0 \quad \forall \tilde{v} \in H^1_{\Gamma}(\Omega) \},
	\end{align}
	satisfying
	\begin{align}
		\label{eq:grad-tildew-perp-reisz-rep-cont}
		\tilde{b}(\bdd{\eta}; r, \vec{\kappa}) := (r, \curl \bdd{\eta}) + \sum_{i=1}^{N_f-1} (\unitvec{t} \cdot \bdd{\eta}, \kappa_i)_{\Gamma_f^{(i)}} = - 	(\bdd{\zeta}, \bdd{\eta})_{\curl} \qquad \forall \bdd{\eta} \in \image \bdd{\Xi}.
	\end{align}
	Conversely, for every $\bdd{\zeta} \in (\grad H^1_{\Gamma}(\Omega))^{\perp}$, there exists a unique pair $(r, \vec{\kappa}) \in L^2_{\Gamma}(\Omega) \times \mathbb{R}^{N_f-1}$ satisfying \cref{eq:grad-tildew-perp-reisz-rep-cont}.
\end{lemma}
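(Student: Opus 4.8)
The plan is to view $\tilde b$ as a bilinear pairing between the Hilbert space $\image \bdd{\Xi} = \hcurlgamma$ — equipped with the inner product $(\cdot,\cdot)_{\curl}$, thanks to \cref{lem:hrotgamma-grad-h10-decomp} — and $L^2_{\Gamma}(\Omega) \times \mathbb{R}^{N_f-1}$, and to show that it induces a bijection of the latter onto $(\grad H^1_{\Gamma}(\Omega))^{\perp}$. Throughout I will use three elementary facts: every $\bdd{\eta} \in \image \bdd{\Xi}$ has the form $\bdd{\eta} = \grad \tilde v - \bdd{\psi}$ with $\tilde v \in H^1_{\Gamma}(\Omega)$ and $\bdd{\psi} \in \bdd{G}_{\Gamma}(\Omega)$ (definition of $\bdd{\Xi}$); $\bdd{G}_{\Gamma}(\Omega) \subseteq \image \bdd{\Xi}$ (apply $\bdd{\Xi}(0,\cdot)$); and, by the trace theorem, $|(\unitvec{t} \cdot \bdd{\eta}, 1)_{\Gamma_f^{(i)}}| \leq C \|\bdd{\eta}\|_{\curl}$, so that $\tilde b(\cdot; r, \vec{\kappa})$ is a bounded functional on $\image \bdd{\Xi}$. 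For the forward direction, Riesz representation on $\hcurlgamma$ produces a unique $\bdd{\zeta} \in \hcurlgamma$ with $\tilde b(\bdd{\eta}; r, \vec{\kappa}) = -(\bdd{\zeta}, \bdd{\eta})_{\curl}$ for all $\bdd{\eta} \in \image \bdd{\Xi} = \hcurlgamma$, and uniqueness of $\bdd{\zeta}$ is automatic since the identity must hold against every test function. That $\bdd{\zeta}$ lies in $(\grad H^1_{\Gamma}(\Omega))^{\perp}$ follows by taking $\bdd{\eta} = \grad \tilde v$: the curl term vanishes, and $(\unitvec{t} \cdot \grad \tilde v, 1)_{\Gamma_f^{(i)}} = \int_{\Gamma_f^{(i)}} \partial_t \tilde v$ is the difference of the boundary values of $\tilde v$ at the endpoints of the arc $\Gamma_f^{(i)}$, which lie on $\Gamma_{cs}$ where $\tilde v$ vanishes (or is zero when $\Gamma_f^{(i)}$ is a closed loop component of $\Gamma$).

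Turning to the converse, uniqueness is quick: if $(r, \vec{\kappa})$ is the difference of two representers of the same $\bdd{\zeta}$, then $\tilde b(\cdot; r, \vec{\kappa}) \equiv 0$ on $\image \bdd{\Xi}$; testing against $\bdd{\psi} \in \bdd{G}_{\Gamma}(\Omega) \subseteq \image \bdd{\Xi}$ furnished by \ref{hp:gradient-infsup-cont} with $\curl \bdd{\psi} = r$ and $(\unitvec{t} \cdot \bdd{\psi}, 1)_{\Gamma_f^{(i)}} = \kappa_i$ gives $0 = \|r\|^2 + |\vec{\kappa}|^2$. For existence, I will build the representer from the blocks supplied by \ref{hp:gradient-infsup-cont}: fix $\bdd{\psi}_j \in \bdd{G}_{\Gamma}(\Omega)$ with $\curl \bdd{\psi}_j = 0$ and $(\unitvec{t} \cdot \bdd{\psi}_j, 1)_{\Gamma_f^{(i)}} = \delta_{ij}$, and for each $s \in L^2_{\Gamma}(\Omega)$ fix $\bdd{\psi}_s \in \bdd{G}_{\Gamma}(\Omega)$ with $\curl \bdd{\psi}_s = s$, vanishing fluxes, and $\|\bdd{\psi}_s\|_1 \leq C\|s\|$. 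Set $\kappa_j := -(\bdd{\zeta}, \bdd{\psi}_j)_{\curl}$; the map $s \mapsto -(\bdd{\zeta}, \bdd{\psi}_s)_{\curl}$ is independent of the chosen $\bdd{\psi}_s$ (two admissible choices differ by a curl-free field in $\bdd{G}_{\Gamma}(\Omega)$ with vanishing fluxes, hence by \ref{hp:gradient-exactness-cont} by $\grad z$ with $z \in H^2_{\Gamma}(\Omega) \subseteq H^1_{\Gamma}(\Omega)$, which $\bdd{\zeta}$ annihilates) and is bounded on $L^2_{\Gamma}(\Omega)$, so Riesz representation gives $r \in L^2_{\Gamma}(\Omega)$.

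It then remains to verify $\tilde b(\bdd{\eta}; r, \vec{\kappa}) = -(\bdd{\zeta}, \bdd{\eta})_{\curl}$ for an arbitrary $\bdd{\eta} = \grad \tilde v - \bdd{\psi} \in \image \bdd{\Xi}$. Exactly as in the forward step, the $\grad \tilde v$ contributions to $\tilde b$ cancel, leaving $\tilde b(\bdd{\eta}; r, \vec{\kappa}) = -\tilde b(\bdd{\psi}; r, \vec{\kappa})$. Writing $\bdd{\chi} := \bdd{\psi}_{\curl \bdd{\psi}} + \sum_{i=1}^{N_f-1} (\unitvec{t} \cdot \bdd{\psi}, 1)_{\Gamma_f^{(i)}} \bdd{\psi}_i$, the definitions of $r$ and $\vec{\kappa}$ give $\tilde b(\bdd{\psi}; r, \vec{\kappa}) = -(\bdd{\zeta}, \bdd{\chi})_{\curl}$; since $\bdd{\psi} - \bdd{\chi}$ is a curl-free field in $\bdd{G}_{\Gamma}(\Omega)$ with vanishing fluxes, \ref{hp:gradient-exactness-cont} yields $\bdd{\psi} - \bdd{\chi} = \grad z$ with $z \in H^2_{\Gamma}(\Omega)$, so $(\bdd{\zeta}, \bdd{\chi})_{\curl} = (\bdd{\zeta}, \bdd{\psi})_{\curl}$. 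Combined with $(\bdd{\zeta}, \grad \tilde v)_{\curl} = 0$, this gives $\tilde b(\bdd{\eta}; r, \vec{\kappa}) = (\bdd{\zeta}, \bdd{\psi})_{\curl} = -(\bdd{\zeta}, \bdd{\eta})_{\curl}$, as required.

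The forward direction and the uniqueness argument are routine; the crux is the converse-existence construction, where the bookkeeping must confirm that the fields $\bdd{\psi}_j$ and $\bdd{\psi}_s$ produced by \ref{hp:gradient-infsup-cont} genuinely assemble into a Riesz-type representer, and this relies on the repeated reduction of curl-free fields in $\bdd{G}_{\Gamma}(\Omega)$ with vanishing $\Gamma_f$-fluxes to gradients of $H^2_{\Gamma}(\Omega)$ functions via \ref{hp:gradient-exactness-cont}. Minor additional care is needed in the degenerate case $|\Gamma_f| = 0$ (checking that the relevant curls lie in $L^2_{\Gamma}(\Omega)$, using that elements of $\hcurlgamma$ then have globally vanishing tangential trace) and in establishing continuity of the flux functionals $\bdd{\eta} \mapsto (\unitvec{t} \cdot \bdd{\eta}, 1)_{\Gamma_f^{(i)}}$ through the trace theorem.
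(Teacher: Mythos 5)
Your proof is correct, and the forward direction and the uniqueness half of the converse coincide with the paper's argument: Riesz representation on $\image \bdd{\Xi} = \hcurlgamma$ gives $\bdd{\zeta}$, membership in $(\grad H^1_{\Gamma}(\Omega))^{\perp}$ follows from $\tilde{b}(\grad\tilde v;\cdot,\cdot)=0$, and testing with the field from~\ref{hp:gradient-infsup-cont} with data $(r,\vec\kappa)$ forces $(r,\vec\kappa)=0$.

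For the existence half of the converse you take a genuinely different and more constructive route. The paper passes through the abstract Babu\v{s}ka--Brezzi machinery: it records the inf-sup condition for $\tilde b$ on $L^2_{\Gamma}(\Omega)\times\mathbb{R}^{N_f-1}$ against $\bdd{G}_{\Gamma}(\Omega)$ (a consequence of~\ref{hp:gradient-infsup-cont}) and then asserts that the functional $\bdd\psi\mapsto-(\bdd\zeta,\bdd\psi)_{\curl}$ lies in its range. That assertion secretly uses the closed-range theorem plus the compatibility condition that this functional annihilates $\Kernel$ of the adjoint, i.e.\ the curl-free elements of $\bdd{G}_{\Gamma}(\Omega)$ with vanishing $\Gamma_f^{(i)}$-fluxes, which by~\ref{hp:gradient-exactness-cont} are $\grad H^2_{\Gamma}(\Omega)\subset\grad H^1_{\Gamma}(\Omega)$ and hence killed by $\bdd\zeta$. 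You instead build $(r,\vec\kappa)$ directly, component by component, from the right-inverse blocks $\bdd{\psi}_j$ and $\bdd{\psi}_s$ supplied by~\ref{hp:gradient-infsup-cont}, and verify the representation identity by hand; the same exactness property~\ref{hp:gradient-exactness-cont} and the same annihilation of $\grad H^1_{\Gamma}(\Omega)$ appear, but now explicitly, in the well-definedness of $r$ and in the reduction $\bdd\psi-\bdd\chi=\grad z$. Net effect: the paper's proof is shorter but leaves the compatibility step to the reader, while yours is longer but makes manifest exactly where $\bdd\zeta\in(\grad H^1_{\Gamma}(\Omega))^{\perp}$ is used. The only cosmetic discrepancy to flag is the sign in the decomposition of $\bdd\eta$: with $\bdd{\Xi}(\tilde v,\bdd\psi)=\grad\tilde v-\bdd\psi$, your $\bdd\eta=\grad\tilde v-\bdd\psi$ is the correct form (the paper's proof writes ``$+$'', which is a harmless typo up to relabelling $\bdd\psi\mapsto-\bdd\psi$), and your sign bookkeeping is consistent throughout. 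You should also make explicit that in defining $r$ by Riesz, one chooses $\bdd\psi_s$ linearly and boundedly in $s$ (which~\ref{hp:gradient-infsup-cont} permits) so that $s\mapsto-(\bdd\zeta,\bdd\psi_s)_{\curl}$ is a bounded linear functional on the Hilbert space $L^2_{\Gamma}(\Omega)$, but this is routine.
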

\begin{proof}
	Let $(r, \vec{\kappa}) \in L^2_{\Gamma}(\Omega) \times \mathbb{R}^{N_f-1}$ be given. Thanks to the trace theorem, there holds
	\begin{align*}
		|\tilde{b}(\bdd{\eta}; r, \vec{\kappa})| \leq C (\|r\| + |\vec{\kappa}|) \| \bdd{\eta}\|_{\curl} \qquad \forall \bdd{\eta} \in \image \bdd{\Xi} = \hcurlgamma,
	\end{align*}
	and so the mapping $\hcurlgamma \ni \bdd{\eta} \mapsto \tilde{b}(\bdd{\eta}; r, \vec{\kappa})$ is a continuous linear functional on $\hcurlgamma$. By the Riesz representation theorem, there exists $\bdd{\zeta} \in \hcurlgamma$ satisfying
	\cref{eq:grad-tildew-perp-reisz-rep-cont} and $\|\bdd{\zeta}\|_{\curl} \leq C (\|r\| + |\vec{\kappa}|)$. Since $\tilde{b}(\grad \tilde{v}; r, \vec{\kappa}) = 0$ for all $\tilde{v} \in H^1_{\Gamma}(\Omega)$,  there holds $\bdd{\zeta} \in (\grad H^1_{\Gamma}(\Omega))^{\perp}$. Moreover, the the linear mapping $(r, \vec{\kappa}) \mapsto \bdd{\zeta}$ is injective since if $\tilde{b}(\bdd{\eta}; r, \vec{\kappa}) =  0$ for all $\bdd{\eta} \in \hcurlgamma$, then \ref{hp:gradient-infsup-cont} shows that $r \equiv 0$ and $\vec{\kappa} = \vec{0}$. 
	
	Now let $\bdd{\zeta} \in (\grad H^1_{\Gamma}(\Omega))^{\perp}$ be given. Thanks to \ref{hp:gradient-infsup-cont}, there exists $\tilde{\beta} > 0$ such that 
	\begin{align*}
		\inf_{ \substack{ (r, \vec{\kappa}) \in L^2_{\Gamma}(\Omega) \times \mathbb{R}^{N_f-1} \\ (r, \vec{\kappa}) \neq (0, \vec{0}) } } \sup_{ \substack{ \bdd{\theta} \in \bdd{G}_{\Gamma}(\Omega) \\ \bdd{\theta} \neq \bdd{0} } } \frac{\tilde{b}(\bdd{\theta}; r, \vec{\kappa})}{ (\|r\| + |\vec{\kappa}|) \|\bdd{\theta}\|_1 } \geq \tilde{\beta},
	\end{align*}
	and so  there exists $r \in L^2_{\Gamma}(\Omega)$ and $\vec{\kappa} \in \mathbb{R}^{N_f - 1}$ satisfying
	\begin{align*}
		\tilde{b}(\bdd{\psi}; r, \vec{\kappa}) = -(\bdd{\zeta}, \bdd{\psi})_{\curl} \qquad \forall \bdd{\psi} \in \bdd{G}_{\Gamma}(\Omega).
	\end{align*}
	Now let $\bdd{\eta} \in \image \bdd{\Xi}$ be decomposed as in \cref{lem:hrotgamma-grad-h10-decomp}: i.e. $\bdd{\eta} = \grad \tilde{v} + \bdd{\psi}$, where $\tilde{v} \in H^1_{\Gamma}(\Omega)$ and $\bdd{\psi} \in \bdd{G}_{\Gamma}(\Omega)$. Then, there holds
	\begin{align*}
		\tilde{b}(\bdd{\eta}; r, \vec{\kappa}) = \tilde{b}(\grad \tilde{v}; r, \vec{\kappa}) + \tilde{b}(\bdd{\psi}; r, \vec{\kappa}) = -(\bdd{\zeta}, \bdd{\psi})_{\curl} = -(\bdd{\zeta}, \bdd{\eta})_{\curl},
	\end{align*}
	where we used that $\tilde{b}(\cdot; r, \vec{\kappa})$ and $(\bdd{\zeta}, \cdot)_{\curl}$ vanish on $\grad H^1_{\Gamma}(\Omega)$. As a result, \cref{eq:grad-tildew-perp-reisz-rep-cont} holds, and we have shown that the continuous mapping $(r, \vec{\kappa}) \mapsto \bdd{\zeta}$ is bijective.
\end{proof}

The next result shows that solutions to \cref{eq:h2-mixed-nodual} satisfy a system of four equations consisting of two elliptic projections and a Stokes-like equation:
\begin{lemma}
	\label{lem:new-method-satisfies-old-cont}
	Suppose that $c(\cdot,\cdot) \equiv 0$. Let $F_1 \in \bdd{G}_{\Gamma}(\Omega)^*$, $F_2 \in H^1_{\Gamma}(\Omega)^*$, $(\tilde{w}, \bdd{\gamma}, \bdd{q}) \in H^1_{\Gamma}(\Omega) \times \bdd{G}_{\Gamma}(\Omega) \times \image \bdd{\Xi}$ be the solution to \cref{eq:h2-mixed-nodual}, and  $\tilde{z} \in H^1_{\Gamma}(\Omega)$ satisfy
	\begin{align}
		\label{eq:tildezx-def-cont}
		(\grad \tilde{z}, \grad \tilde{v}) &= F_2(\tilde{v}) \qquad & &\forall \tilde{v} \in H^1_{\Gamma}(\Omega).
	\end{align}
	Then, there exists unique $(r, \vec{\kappa}) \in L^2_{\Gamma}(\Omega) \times \mathbb{R}^{N_f - 1}$ such that
	\begin{subequations}
		\label{eq:stokes-system-cont}
		\begin{alignat}{2}
			\label{eq:stokes-system-cont-1}
			a(\bdd{\gamma}, \bdd{\psi}) + \tilde{b}(\bdd{\psi}; r, \vec{\kappa}) &= (\grad \tilde{z}, \bdd{\psi}) + F_1(\bdd{\psi}) \qquad & &\forall \bdd{\psi} \in \bdd{G}_{\Gamma}(\Omega), \\
			\label{eq:stokes-system-cont-2}
			\tilde{b}(\bdd{\gamma}; s, \vec{\mu}) &= 0 \qquad & &\forall (s, \vec{\mu}) \in L^2_{\Gamma}(\Omega) \times \mathbb{R}^{N_f - 1},
		\end{alignat}
	\end{subequations}
	and $\tilde{w}$ satisfies
	\begin{align}
		\label{eq:h1-postprocess-cont}
		(\grad \tilde{w}, \grad \tilde{u}) = (\bdd{\gamma}, \grad \tilde{u}) \qquad \forall \tilde{u} \in H^1_{\Gamma}(\Omega).
	\end{align}
\end{lemma}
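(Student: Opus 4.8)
The plan is to peel off the four equations \cref{eq:stokes-system-cont-1,eq:stokes-system-cont-2,eq:h1-postprocess-cont} one at a time from the mixed system \cref{eq:h2-mixed-nodual} by testing it against structured trial functions, and to manufacture the multiplier pair $(r,\vec{\kappa})$ using \cref{lem:grad-tildew-perp-reisz-rep-cont}. First I would recall from \cref{cor:h2-mixed-nodual} that the solution satisfies $\tilde{w}=w\in H^2_{\Gamma}(\Omega)$ and $\bdd{\gamma}=\grad w$, so $\bdd{\Xi}(\tilde{w},\bdd{\gamma})=\bdd{0}$. Testing \cref{eq:h2-mixed-nodual-2} with $\bdd{\eta}=\grad\tilde{u}$, $\tilde{u}\in H^1_{\Gamma}(\Omega)$ — which is admissible since $\bdd{\Xi}(\tilde{u},\bdd{0})=\grad\tilde{u}\in\image\bdd{\Xi}$ — and using $\curl\grad\tilde{u}=\bdd{0}$ to discard the $\curl$-part of $(\cdot,\cdot)_{\curl}$ gives \cref{eq:h1-postprocess-cont} at once. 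The equation \cref{eq:stokes-system-cont-2} does not involve $(r,\vec{\kappa})$: since $\bdd{\gamma}=\grad w$ with $w\in H^2_{\Gamma}(\Omega)$, property \ref{hp:gradient-exactness-cont} yields $\curl\bdd{\gamma}=0$ and $(\unitvec{t}\cdot\bdd{\gamma},1)_{\Gamma_f^{(i)}}=0$ for $1\le i\le N_f-1$, so $\tilde{b}(\bdd{\gamma};s,\vec{\mu})=0$ for every $(s,\vec{\mu})\in L^2_{\Gamma}(\Omega)\times\mathbb{R}^{N_f-1}$.

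It remains to build $(r,\vec{\kappa})$ and verify \cref{eq:stokes-system-cont-1}. Testing \cref{eq:h2-mixed-nodual-1} with $(\tilde{v},\bdd{0})$ and using $\curl\grad\tilde{v}=\bdd{0}$ yields $(\grad\tilde{v},\bdd{q})=F_2(\tilde{v})$ for all $\tilde{v}\in H^1_{\Gamma}(\Omega)$; comparing with \cref{eq:tildezx-def-cont}, the field $\bdd{\zeta}:=\bdd{q}-\grad\tilde{z}$ — which lies in $\image\bdd{\Xi}=\hcurlgamma$ because both $\bdd{q}$ and $\grad\tilde{z}$ do — satisfies $(\bdd{\zeta},\grad\tilde{v})_{\curl}=(\bdd{\zeta},\grad\tilde{v})=0$ for all $\tilde{v}\in H^1_{\Gamma}(\Omega)$, i.e. $\bdd{\zeta}\in(\grad H^1_{\Gamma}(\Omega))^{\perp}$. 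Hence \cref{lem:grad-tildew-perp-reisz-rep-cont} furnishes a unique pair $(r,\vec{\kappa})\in L^2_{\Gamma}(\Omega)\times\mathbb{R}^{N_f-1}$ with $\tilde{b}(\bdd{\eta};r,\vec{\kappa})=-(\bdd{\zeta},\bdd{\eta})_{\curl}$ for all $\bdd{\eta}\in\image\bdd{\Xi}$. To obtain \cref{eq:stokes-system-cont-1} I would test \cref{eq:h2-mixed-nodual-1} with $(0,\bdd{\psi})$, $\bdd{\psi}\in\bdd{G}_{\Gamma}(\Omega)$, using $\bdd{\Xi}(0,\bdd{\psi})=-\bdd{\psi}$ to get $a(\bdd{\gamma},\bdd{\psi})-(\bdd{q},\bdd{\psi})_{\curl}=F_1(\bdd{\psi})$, then write $(\bdd{q},\bdd{\psi})_{\curl}=(\bdd{\zeta},\bdd{\psi})_{\curl}+(\grad\tilde{z},\bdd{\psi})$ (using $\curl\grad\tilde{z}=\bdd{0}$), and substitute $(\bdd{\zeta},\bdd{\psi})_{\curl}=-\tilde{b}(\bdd{\psi};r,\vec{\kappa})$ — valid since $\bdd{\psi}=\bdd{\Xi}(0,-\bdd{\psi})\in\image\bdd{\Xi}$. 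Rearranging is exactly \cref{eq:stokes-system-cont-1}. Uniqueness of $(r,\vec{\kappa})$ as a solution of the Stokes system then follows from \ref{hp:gradient-infsup-cont}, since the difference of two solutions of \cref{eq:stokes-system-cont-1} is annihilated by $\tilde{b}(\bdd{\psi};\cdot,\cdot)$ for all $\bdd{\psi}\in\bdd{G}_{\Gamma}(\Omega)$ and must therefore vanish.

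The proof involves no serious analytic difficulty: all the substantive estimates are already packaged in \ref{hp:gradient-infsup-cont}, \ref{hp:gradient-exactness-cont}, \cref{cor:h2-mixed-nodual}, and \cref{lem:grad-tildew-perp-reisz-rep-cont}. The part needing the most care is the bookkeeping that separates the $L^2$ and $\hcurl$ inner products — systematically exploiting $\curl\grad(\cdot)=\bdd{0}$ — and the repeated check that the structured test functions $\grad\tilde{u}$, $\grad\tilde{z}$, $\bdd{\psi}\in\bdd{G}_{\Gamma}(\Omega)$, and $\bdd{q}-\grad\tilde{z}$ all lie in $\image\bdd{\Xi}=\hcurlgamma$, so that \cref{lem:grad-tildew-perp-reisz-rep-cont} and the orthogonality in \cref{eq:grad-tildew-perp-cont} genuinely apply. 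The single identification most easily overlooked is that it is $\bdd{q}-\grad\tilde{z}$, not $\bdd{q}$ itself, that plays the role of $\bdd{\zeta}$ in \cref{lem:grad-tildew-perp-reisz-rep-cont}.
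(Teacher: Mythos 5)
Your argument is correct and follows essentially the same route as the paper: you introduce $\bdd{\zeta}=\bdd{q}-\grad\tilde{z}$, show it lies in $(\grad H^1_{\Gamma}(\Omega))^{\perp}$ by testing \cref{eq:h2-mixed-nodual-1} with $(\tilde{v},\bdd{0})$, invoke \cref{lem:grad-tildew-perp-reisz-rep-cont} to produce $(r,\vec{\kappa})$, and recover \cref{eq:stokes-system-cont-1} and \cref{eq:h1-postprocess-cont} by testing with $(0,\bdd{\psi})$ and $\bdd{\eta}=\grad\tilde{u}$, respectively. The one minor deviation is your treatment of \cref{eq:stokes-system-cont-2}: the paper derives it from \cref{eq:h2-mixed-nodual-2} via the converse direction of \cref{lem:grad-tildew-perp-reisz-rep-cont}, whereas you appeal directly to $\bdd{\gamma}=\grad w$ (from \cref{cor:h2-mixed-nodual}) together with \ref{hp:gradient-exactness-cont}, which is equally valid and arguably a touch more transparent; you also make the uniqueness of $(r,\vec{\kappa})$ as a Stokes solution explicit via \ref{hp:gradient-infsup-cont}, which the paper leaves implicit.
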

\begin{proof}
	Let $\bdd{\zeta} = \bdd{q} - \grad \tilde{z} \in \image \bdd{\Xi}$. Choosing $\bdd{\psi} \equiv \bdd{0}$ in \cref{eq:h2-mixed-nodual-1} gives
	\begin{align*}
		(\bdd{\zeta}, \grad \tilde{v})_{\curl} = (\bdd{q}, \grad \tilde{v})_{\curl} - (\grad \tilde{z}, \grad \tilde{v})_{\curl} = 0 \qquad \forall \tilde{v} \in H^1_{\Gamma}(\Omega).
	\end{align*}
	Consequently, $\bdd{\zeta} \in (\grad H^1_{\Gamma}(\Omega))^{\perp}$ \cref{eq:grad-tildew-perp-cont}, and so there exists unique $(r, \vec{\kappa}) \in L^2_{\Gamma}(\Omega) \times \mathbb{R}^{N_f - 1}$ satisfying \cref{eq:grad-tildew-perp-reisz-rep-cont} thanks to \cref{lem:grad-tildew-perp-reisz-rep-cont}. Choosing $\tilde{v} \equiv 0$ in \cref{eq:h2-mixed-nodual-1} then gives
	\begin{align*}
		F_1(\bdd{\psi}) = a(\bdd{\gamma}, \bdd{\psi}) - (\bdd{\psi}, \bdd{q})_{\curl} = a(\bdd{\gamma}, \bdd{\psi})  -  (\bdd{\zeta}, \bdd{\psi})_{\curl} -  (\grad \tilde{z}, \bdd{\psi}) \qquad \forall \bdd{\psi} \in \bdd{G}_{\Gamma}(\Omega),
	\end{align*}
	and \cref{eq:stokes-system-cont-1} follows.
	
	Now let $(s, \vec{\mu}) \in L^2_{\Gamma}(\Omega) \times \mathbb{R}^{N_f - 1}$. Thanks to \cref{lem:grad-tildew-perp-reisz-rep-cont}, there exists $\bdd{\eta} \in (\grad H^1_{\Gamma}(\Omega))^{\perp}$ such that $b(\bdd{\gamma}; s, \vec{\mu}) = (\bdd{\eta}, \grad \tilde{w} - \bdd{\gamma})_{\curl} = 0$, and \cref{eq:stokes-system-cont-2} follows. Finally, choosing $\bdd{\eta} = \grad \tilde{u}$ for $\tilde{u} \in H^1_{\Gamma}(\Omega)$ in \cref{eq:h2-mixed-nodual-2} gives
	\begin{align*}
		(\grad \tilde{w}, \grad \tilde{u}) = (\grad \tilde{w}, \grad \tilde{u})_{\curl} = (\bdd{\gamma}, \grad \tilde{u})_{\curl} = (\bdd{\gamma}, \grad \tilde{u}),
	\end{align*}
	and \cref{eq:h1-postprocess-cont} follows.
\end{proof}

Finally, we show that the converse of \cref{lem:new-method-satisfies-old-cont} holds:
\begin{lemma}
	\label{lem:old-method-satisfies-new-cont}
	Let $c(\cdot,\cdot) \equiv 0$. For every $F_1 \in \bdd{G}_{\Gamma}(\Omega)^*$ and $F_2 \in H^1_{\Gamma}(\Omega)^*$, the following variational problem is uniquely solvable:	
	Find $(\tilde{z}, \bdd{\gamma}, r, \vec{\kappa}, \tilde{w}) \in   H^1_{\Gamma}(\Omega) \times \bdd{G}_{\Gamma}(\Omega) \times L^2_{\Gamma}(\Omega) \times \mathbb{R}^{N_f - 1} \times  H^1_{\Gamma}(\Omega)$ such that \cref{eq:tildezx-def-cont,eq:stokes-system-cont,eq:h1-postprocess-cont} holds. Moreover, there exists unique $\bdd{\zeta} \in (\grad H^1_{\Gamma}(\Omega))^{\perp}$ such that
	\begin{align}
		\label{eq:zeta-quad-system}
		(\bdd{\zeta}, \bdd{\eta})_{\curl} = -\tilde{b}(\bdd{\eta}; r, \vec{\kappa}) \qquad \forall \bdd{\eta} \in \image \bdd{\Xi},
	\end{align}
	and the triplet $(\tilde{w}, \bdd{\gamma}, \bdd{\zeta} + \grad \tilde{z})$ is the unique solution to \cref{eq:h2-mixed-nodual}.
\end{lemma}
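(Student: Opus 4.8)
The plan is to leverage the already-established well-posedness of \cref{eq:h2-mixed-nodual} (\cref{cor:h2-mixed-nodual}) together with \cref{lem:new-method-satisfies-old-cont} for the existence half, and to reserve a short direct verification for the converse direction, after which uniqueness follows essentially for free. For existence, \cref{cor:h2-mixed-nodual} furnishes the unique solution $(\tilde{w}, \bdd{\gamma}, \bdd{q}) \in H^1_{\Gamma}(\Omega) \times \bdd{G}_{\Gamma}(\Omega) \times \image \bdd{\Xi}$ of \cref{eq:h2-mixed-nodual}, while standard elliptic theory (as in \cref{lem:new-method-satisfies-old-cont}) gives the unique $\tilde{z} \in H^1_{\Gamma}(\Omega)$ solving \cref{eq:tildezx-def-cont}. \Cref{lem:new-method-satisfies-old-cont} then produces $(r, \vec{\kappa}) \in L^2_{\Gamma}(\Omega) \times \mathbb{R}^{N_f-1}$ such that \cref{eq:stokes-system-cont,eq:h1-postprocess-cont} hold, so $(\tilde{z}, \bdd{\gamma}, r, \vec{\kappa}, \tilde{w})$ solves the system; moreover, its proof shows that $\bdd{\zeta} := \bdd{q} - \grad \tilde{z}$ lies in $(\grad H^1_{\Gamma}(\Omega))^{\perp}$ and satisfies \cref{eq:grad-tildew-perp-reisz-rep-cont} for every $\bdd{\eta} \in \image \bdd{\Xi}$, which is exactly \cref{eq:zeta-quad-system}, with $\bdd{q} = \bdd{\zeta} + \grad \tilde{z}$. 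By the bijectivity asserted in \cref{lem:grad-tildew-perp-reisz-rep-cont}, this $\bdd{\zeta}$ is the unique element of $(\grad H^1_{\Gamma}(\Omega))^{\perp}$ satisfying \cref{eq:zeta-quad-system}.

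For the converse, I would take $(\tilde{z}, \bdd{\gamma}, r, \vec{\kappa}, \tilde{w})$ to be \emph{any} solution of the system and let $\bdd{\zeta} \in (\grad H^1_{\Gamma}(\Omega))^{\perp}$ be the field attached to $(r, \vec{\kappa})$ by \cref{lem:grad-tildew-perp-reisz-rep-cont} (so \cref{eq:zeta-quad-system} holds), and show that $(\tilde{w}, \bdd{\gamma}, \bdd{\zeta} + \grad \tilde{z})$ solves \cref{eq:h2-mixed-nodual}. For \cref{eq:h2-mixed-nodual-2}: equation \cref{eq:stokes-system-cont-2} states $\tilde{b}(\bdd{\gamma}; \cdot, \cdot) \equiv 0$, which, together with the definition of $L^2_{\Gamma}(\Omega)$ and the tangential boundary condition built into $\bdd{G}_{\Gamma}(\Omega)$, forces $\curl \bdd{\gamma} \equiv 0$ and $(\unitvec{t} \cdot \bdd{\gamma}, 1)_{\Gamma_f^{(i)}} = 0$ for $1 \leq i \leq N_f - 1$; by \ref{hp:gradient-exactness-cont} this gives $\bdd{\gamma} = \grad w$ for some $w \in H^2_{\Gamma}(\Omega)$, and testing \cref{eq:h1-postprocess-cont} with $\tilde{u} = \tilde{w} - w \in H^1_{\Gamma}(\Omega)$ yields $\grad \tilde{w} = \grad w = \bdd{\gamma}$, so $\bdd{\Xi}(\tilde{w}, \bdd{\gamma}) = \bdd{0}$ and \cref{eq:h2-mixed-nodual-2} holds trivially. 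For \cref{eq:h2-mixed-nodual-1}, since $c(\cdot,\cdot) \equiv 0$ I would expand $(\bdd{\Xi}(\tilde{v}, \bdd{\psi}), \bdd{\zeta} + \grad \tilde{z})_{\curl}$ for arbitrary $(\tilde{v}, \bdd{\psi}) \in H^1_{\Gamma}(\Omega) \times \bdd{G}_{\Gamma}(\Omega)$ using four facts: $(\grad \tilde{v}, \bdd{\zeta})_{\curl} = 0$ because $\bdd{\zeta} \in (\grad H^1_{\Gamma}(\Omega))^{\perp}$ \cref{eq:grad-tildew-perp-cont}; $(\grad \tilde{v}, \grad \tilde{z})_{\curl} = (\grad \tilde{v}, \grad \tilde{z}) = F_2(\tilde{v})$ by \cref{eq:tildezx-def-cont}, the curl contribution vanishing since the curl of a gradient is zero; $-(\bdd{\psi}, \bdd{\zeta})_{\curl} = \tilde{b}(\bdd{\psi}; r, \vec{\kappa})$ by \cref{eq:zeta-quad-system}, noting $\bdd{\psi} = \bdd{\Xi}(0, -\bdd{\psi}) \in \image \bdd{\Xi}$; and $-(\bdd{\psi}, \grad \tilde{z})_{\curl} = -(\grad \tilde{z}, \bdd{\psi})$. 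Summing these and invoking \cref{eq:stokes-system-cont-1} to replace $a(\bdd{\gamma}, \bdd{\psi}) + \tilde{b}(\bdd{\psi}; r, \vec{\kappa}) - (\grad \tilde{z}, \bdd{\psi})$ by $F_1(\bdd{\psi})$ shows the left-hand side of \cref{eq:h2-mixed-nodual-1} equals $F_1(\bdd{\psi}) + F_2(\tilde{v})$, establishing the claim.

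Finally, for uniqueness: since \cref{eq:h2-mixed-nodual} has only one solution by \cref{cor:h2-mixed-nodual}, the triplet $(\tilde{w}, \bdd{\gamma}, \bdd{\zeta} + \grad \tilde{z})$ is the same for every solution of the system; because \cref{eq:tildezx-def-cont} determines $\tilde{z}$ uniquely, it follows that $\bdd{\zeta}$ is uniquely determined, hence so is $(r, \vec{\kappa})$ by the bijection in \cref{lem:grad-tildew-perp-reisz-rep-cont}, and $\bdd{\gamma}$ and $\tilde{w}$ are unique as well. This yields unique solvability of the system and the asserted correspondence with \cref{eq:h2-mixed-nodual}. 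I expect the only mildly delicate points to be the term-by-term bookkeeping in the $(\cdot,\cdot)_{\curl}$ expansion for \cref{eq:h2-mixed-nodual-1} and the identification $\bdd{\gamma} \in \grad H^2_{\Gamma}(\Omega)$ from \cref{eq:stokes-system-cont-2}; neither constitutes a genuine obstacle, since both reduce to facts already recorded in \ref{hp:gradient-exactness-cont}, \cref{lem:grad-tildew-perp-reisz-rep-cont}, and \cref{lem:new-method-satisfies-old-cont}.
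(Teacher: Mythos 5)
Your proof is correct. You establish existence exactly as the paper does (via \cref{lem:new-method-satisfies-old-cont}), and you carry out the converse verification --- that any solution of the system \cref{eq:tildezx-def-cont,eq:stokes-system-cont,eq:h1-postprocess-cont} maps to a solution of \cref{eq:h2-mixed-nodual} --- in full detail, whereas the paper simply refers to ``similar arguments.'' Your term-by-term expansion of $(\bdd{\Xi}(\tilde{v},\bdd{\psi}), \bdd{\zeta}+\grad\tilde{z})_{\curl}$ is correct, and the deduction that $\grad\tilde{w}=\bdd{\gamma}$ from \cref{eq:stokes-system-cont-2}, \ref{hp:gradient-exactness-cont}, and \cref{eq:h1-postprocess-cont} is also sound (including the observation that $\curl\bdd{\gamma}\in L^2_\Gamma(\Omega)$ when $|\Gamma_f|=0$).

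Where you genuinely differ from the paper is in the uniqueness argument. The paper proves uniqueness of the system directly: it sets $F_1\equiv 0$, $F_2\equiv 0$, uses the coercivity from \cref{eq:b-bilinear-infsup} to conclude $w\equiv 0$, then invokes \ref{hp:gradient-infsup-cont} to kill $(r,\vec{\kappa})$ and backs out $\tilde{z}\equiv 0$ and $\tilde{w}\equiv 0$. You instead derive uniqueness indirectly by chaining three already-established facts: the forward map to \cref{eq:h2-mixed-nodual} is well-defined (your converse verification), \cref{eq:h2-mixed-nodual} has a unique solution (\cref{cor:h2-mixed-nodual}), and the correspondence $(r,\vec{\kappa})\leftrightarrow\bdd{\zeta}$ is a bijection (\cref{lem:grad-tildew-perp-reisz-rep-cont}). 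Your route is cleaner in that it avoids a second pass through the inf-sup machinery and makes the equivalence of the two formulations do all the work; the paper's route is more self-contained, requiring only the two cited properties \ref{hp:gradient-infsup-cont} and \ref{hp:gradient-exactness-cont} rather than leaning on the full converse verification. Both are correct; the outcome is the same.
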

\begin{proof}
	Existence of solutions to \cref{eq:tildezx-def-cont,eq:stokes-system-cont,eq:h1-postprocess-cont} follow from \cref{lem:new-method-satisfies-old-cont}. We now turn to uniqueness of solutions. Suppose that $F_1 \equiv 0$ and $F_2 \equiv 0$. Then \cref{eq:tildezx-def-cont} shows that $\tilde{z} \equiv 0$. \Cref{eq:stokes-system-cont-2} and \ref{hp:gradient-exactness-cont} mean that $\bdd{\gamma} = \grad w$ for some $w \in H^2_{\Gamma}(\Omega)$. Testing \cref{eq:stokes-system-cont-1} with $\bdd{\psi} = \grad v$ for $v \in H^2_{\Gamma}(\Omega)$ gives $a(\grad w, \grad v) = 0$ for all $v \in H^2_{\Gamma}(\Omega)$, and so $w \equiv 0$ thanks to \cref{eq:b-bilinear-infsup}. Testing \cref{eq:stokes-system-cont-1} with $\bdd{\psi}$ chosen as in  \ref{hp:gradient-infsup-cont} then gives $r = 0$ and $\vec{\kappa} = 0$. Since $\bdd{\gamma} = \grad w = \bdd{0}$, \cref{eq:h1-postprocess-cont} shows that $\tilde{w} \equiv 0$.
	
	The existence of $\bdd{\zeta} \in (\grad H^1_{\Gamma}(\Omega))^{\perp}$ satisfying \cref{eq:zeta-quad-system} follows from \cref{lem:grad-tildew-perp-reisz-rep-cont} and that the triplet $(\tilde{w}, \bdd{\gamma}, \bdd{\zeta} + \grad \tilde{z})$ satisfies \cref{eq:h2-mixed-nodual} follows from similar arguments as in the proof of \cref{lem:new-method-satisfies-old-cont}.
\end{proof}

We now repeat the same arguments at the discrete level to avoid the appearance of the space $\image \bdd{\Xi}_X$. We start by stating the following discrete analogues of \ref{hp:gradient-infsup-cont} and \ref{hp:gradient-exactness-cont} respectively: \\

\begin{description}
	\item[(B1)\label{hp:gradient-infsup}] For all $\vec{\kappa} \in \mathbb{R}^{N_f - 1}$, there exists $\bdd{\psi} \in \mathbbb{G}_{\Gamma}$ satisfying
	\begin{align*}
		\curl \bdd{\psi} \equiv 0 \quad \text{and} \quad  (\unitvec{t} \cdot \bdd{\psi}, 1)_{\Gamma_f^{(i)}} = \kappa_i, \quad 1 \leq i \leq N_f - 1.
	\end{align*}
\end{description}

\noindent In particular, note that \ref{hp:gradient-infsup} means that for all $r \in \curl \mathbbb{G}_{\Gamma}$ and $\vec{\kappa} \in \mathbb{R}^{N_f - 1}$, there exists $\bdd{\psi} \in \mathbbb{G}_{\Gamma}$ satisfying
\begin{align*}
	\curl \bdd{\psi} \equiv r \quad \text{and} \quad  (\unitvec{t} \cdot \bdd{\psi}, 1)_{\Gamma_f^{(i)}} = \kappa_i, \quad 1 \leq i \leq N_f - 1.
\end{align*}
Since $\mathbbb{G}_{\Gamma} \times \mathbb{R}^{N_f - 1}$ is finite dimensional, $\bdd{\psi}$ can also be chosen so that $\|\bdd{\psi}\|_{1} \leq C_X (\|r\| + |\vec{\kappa}|)$, where $C_X > 0$ is a constant independent of $r$ and $\vec{\kappa}$ possibly depending on the dimension of $\mathbbb{G}_{\Gamma} \times \mathbb{R}^{N_f - 1}$. Thus, \ref{hp:gradient-infsup} implies that \ref{hp:gradient-exactness-cont} holds with $\bdd{G}_{\Gamma}(\Omega)$ and $L^2_{\Gamma}(\Omega)$ replaced by $\mathbbb{G}_{\Gamma}$ and $\curl \mathbbb{G}_{\Gamma}$.

The discrete analogue of \ref{hp:gradient-exactness-cont} is precisely \ref{hp:gradient-exactness-cont} with $H^2_{\Gamma}(\Omega)$ and $\bdd{G}_{\Gamma}(\Omega)$ replaced by $\mathbb{W}_{\Gamma}$ and $\mathbbb{G}_{\Gamma}$: \\
\begin{description}
	\item[(B2)\label{hp:gradient-exactness}] $\grad \mathbb{W}_{\Gamma} = \{ \bdd{\gamma} \in \mathbbb{G}_{\Gamma} : \curl \bdd{\gamma} \equiv 0 \text{ and } (\unitvec{t} \cdot \bdd{\gamma}, 1)_{\Gamma_f^{(i)}} = 0, \ 1 \leq i \leq N_f-1  \}$. \\
\end{description}

\noindent Then, exactly the same arguments that led to \cref{lem:new-method-satisfies-old-cont,lem:old-method-satisfies-new-cont} give the following result:
\begin{theorem}
	\label{thm:two-methods-equivalent}
	Suppose that $c(\cdot,\cdot) \equiv 0$. Let $\tilde{\mathbb{W}}_{\Gamma}$, $\mathbbb{G}_{\Gamma}$, and $\mathbbb{Q}_{\Gamma}$ be given spaces satisfying \ref{hp:tildewgamma-ggamma-def}-\ref{hp:b-bilinear-infsup} and \ref{hp:gradient-infsup}-\ref{hp:gradient-exactness}.  Suppose that $F_2 \in \tilde{\mathbb{W}}_{\Gamma}^*$, $(\tilde{w}_X, \bdd{\gamma}_X, \bdd{q}_X) \in \tilde{\mathbb{W}}_{\Gamma} \times \mathbbb{G}_{\Gamma} \times \mathbbb{Q}_{\Gamma}$ is the solution to \cref{eq:h2-mixed-problem-fem}, and  $\tilde{z}_X \in \tilde{\mathbb{W}}_{\Gamma}$ satisfies
	\begin{align}
		\label{eq:tildezx-def}
		(\grad \tilde{z}_X, \grad \tilde{v}) &= F_2(\tilde{v}) \qquad & &\forall \tilde{v} \in \tilde{\mathbb{W}}_{\Gamma}.
	\end{align}
	Then, there exists unique $(r_X, \vec{\kappa}) \in \curl \mathbbb{G}_{\Gamma} \times \mathbb{R}^{N_f - 1}$ such that
	\begin{subequations}
		\label{eq:stokes-system}
		\begin{alignat}{2}
			\label{eq:stokes-system-1}
			a(\bdd{\gamma}_X, \bdd{\psi}) + \tilde{b}(\bdd{\psi}; r_X, \vec{\kappa}) &= (\grad \tilde{z}_X, \bdd{\psi}) + F_1(\bdd{\psi}) \qquad & &\forall \bdd{\psi} \in \mathbbb{G}_{\Gamma}, \\
			\label{eq:stokes-system-2}
			\tilde{b}(\bdd{\gamma}_X; s, \vec{\mu}) &= 0 \qquad & &\forall (s, \vec{\mu}) \in \curl \mathbbb{G}_{\Gamma} \times \mathbb{R}^{N_f - 1},
		\end{alignat}
	\end{subequations}
	and $\tilde{w}_X$ satisfies
	\begin{align}
		\label{eq:h1-postprocess}
		(\grad \tilde{w}_X, \grad \tilde{u}) = (\bdd{\gamma}_X, \grad \tilde{u}) \qquad \forall \tilde{u} \in \tilde{\mathbb{W}}_{\Gamma}.
	\end{align}
	
	Conversely, for all $F_1 \in \mathbbb{G}_{\Gamma}^*$ and $F_2 \in \tilde{\mathbb{W}}_{\Gamma}^*$, the following variational problem is uniquely solvable:	
	Find $(\tilde{z}_X, \bdd{\gamma}_X, r_X, \vec{\kappa}, \tilde{w}_X) \in  \tilde{\mathbb{W}}_{\Gamma} \times \mathbbb{G}_{\Gamma} \times \curl \mathbbb{G}_{\Gamma} \times \mathbb{R}^{N_f - 1} \times \tilde{\mathbb{W}}_{\Gamma}$ such that \cref{eq:tildezx-def,eq:stokes-system,eq:h1-postprocess} holds. Moreover, there exists unique vector-field
	\begin{align*}
		\bdd{\zeta} \in	(\grad \tilde{\mathbb{W}}_{\Gamma})^{\perp} := \{ \bdd{\phi} \in \image \bdd{\Xi}_X : (\bdd{\phi}, \grad \tilde{v})_{\curl} = 0 \quad \forall \tilde{v} \in \tilde{\mathbb{W}}_{\Gamma} \},
	\end{align*}
	such that
	\begin{align*}
		(\bdd{\zeta}_X, \bdd{\eta})_{\curl} = -\tilde{b}(\bdd{\eta}; r_X, \vec{\kappa}) \qquad \forall \bdd{\eta} \in \mathbbb{Q}_{\Gamma},
	\end{align*}
	and the triple $(\tilde{w}_X, \bdd{\gamma}_X, \bdd{\zeta}_X + \grad \tilde{z}_X)$ is the unique solution to \cref{eq:h2-mixed-problem-fem}.
\end{theorem}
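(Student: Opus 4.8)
The plan is to transport the three-step argument behind \cref{lem:grad-tildew-perp-reisz-rep-cont,lem:new-method-satisfies-old-cont,lem:old-method-satisfies-new-cont} to the finite-dimensional setting, replacing the continuous spaces $H^1_{\Gamma}(\Omega)$, $\bdd{G}_{\Gamma}(\Omega)$, $L^2_{\Gamma}(\Omega)$, $H^2_{\Gamma}(\Omega)$ and the properties \ref{hp:gradient-infsup-cont}, \ref{hp:gradient-exactness-cont} by $\tilde{\mathbb{W}}_{\Gamma}$, $\mathbbb{G}_{\Gamma}$, $\curl \mathbbb{G}_{\Gamma}$, $\mathbb{W}_{\Gamma}$ and \ref{hp:gradient-infsup}, \ref{hp:gradient-exactness}, and working throughout on the finite-dimensional inner-product space $(\mathbbb{Q}_{\Gamma}, (\cdot,\cdot)_{\curl})$, which under \ref{hp:qchoice} equals $\image \bdd{\Xi}_X = \grad \tilde{\mathbb{W}}_{\Gamma} + \mathbbb{G}_{\Gamma}$.

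First I would prove the discrete analogue of \cref{lem:grad-tildew-perp-reisz-rep-cont}: for every $(r_X, \vec{\kappa}) \in \curl \mathbbb{G}_{\Gamma} \times \mathbb{R}^{N_f-1}$ there is a unique $\bdd{\zeta}_X \in (\grad \tilde{\mathbb{W}}_{\Gamma})^{\perp}$ with $(\bdd{\zeta}_X, \bdd{\eta})_{\curl} = -\tilde{b}(\bdd{\eta}; r_X, \vec{\kappa})$ for all $\bdd{\eta} \in \mathbbb{Q}_{\Gamma}$, and conversely. Existence of $\bdd{\zeta}_X$ is the Riesz representation theorem applied to the linear functional $\tilde{b}(\cdot; r_X, \vec{\kappa})$ on $(\mathbbb{Q}_{\Gamma}, (\cdot,\cdot)_{\curl})$; since $\curl \grad \tilde{v} \equiv 0$ and $\unitvec{t} \cdot \grad \tilde{v} = \partial_t \tilde{v}$ integrates to zero over each $\Gamma_f^{(i)}$ (its endpoints lie on $\Gamma_{cs}$, where $\tilde{v}$ vanishes), one has $\tilde{b}(\grad \tilde{v}; r_X, \vec{\kappa}) = 0$ for all $\tilde{v} \in \tilde{\mathbb{W}}_{\Gamma}$, so the representer lies in $(\grad \tilde{\mathbb{W}}_{\Gamma})^{\perp}$; injectivity of $(r_X, \vec{\kappa}) \mapsto \bdd{\zeta}_X$ follows because $\tilde{b}(\bdd{\eta}; r_X, \vec{\kappa}) = 0$ for all $\bdd{\eta} \in \mathbbb{Q}_{\Gamma}$ forces $r_X \equiv 0$ and $\vec{\kappa} = \vec{0}$ on testing against the fields supplied by \ref{hp:gradient-infsup}. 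For the converse direction, given $\bdd{\zeta}_X \in (\grad \tilde{\mathbb{W}}_{\Gamma})^{\perp}$ I would use \ref{hp:gradient-infsup} (which on the finite-dimensional space $\mathbbb{G}_{\Gamma}$ yields a discrete inf-sup for $\tilde{b}(\cdot;\cdot,\cdot)$) to solve $\tilde{b}(\bdd{\psi}; r_X, \vec{\kappa}) = -(\bdd{\zeta}_X, \bdd{\psi})_{\curl}$ for all $\bdd{\psi} \in \mathbbb{G}_{\Gamma}$, and then observe that any $\bdd{\eta} \in \mathbbb{Q}_{\Gamma}$ decomposes as $\bdd{\eta} = \grad \tilde{v} - \bdd{\psi}$ with $\tilde{v} \in \tilde{\mathbb{W}}_{\Gamma}$, $\bdd{\psi} \in \mathbbb{G}_{\Gamma}$, so that the identity extends from $\mathbbb{G}_{\Gamma}$ to all of $\mathbbb{Q}_{\Gamma}$ because both $\tilde{b}(\cdot; r_X, \vec{\kappa})$ and $(\bdd{\zeta}_X, \cdot)_{\curl}$ annihilate $\grad \tilde{\mathbb{W}}_{\Gamma}$.

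With the discrete Riesz lemma in hand, the forward implication follows verbatim from the proof of \cref{lem:new-method-satisfies-old-cont}: put $\bdd{\zeta}_X = \bdd{q}_X - \grad \tilde{z}_X$; testing \cref{eq:h2-mixed-problem-fem-1} with $\bdd{\psi} = \bdd{0}$ and invoking \cref{eq:tildezx-def} gives $(\bdd{\zeta}_X, \grad \tilde{v})_{\curl} = 0$ for all $\tilde{v} \in \tilde{\mathbb{W}}_{\Gamma}$, hence $\bdd{\zeta}_X \in (\grad \tilde{\mathbb{W}}_{\Gamma})^{\perp}$ and the Riesz lemma produces the unique $(r_X, \vec{\kappa})$; testing \cref{eq:h2-mixed-problem-fem-1} with $\tilde{v} = 0$ gives \cref{eq:stokes-system-1}; \cref{eq:stokes-system-2} comes from \cref{eq:h2-mixed-problem-fem-2} after rewriting $\tilde{b}(\bdd{\gamma}_X; s, \vec{\mu})$ through the representer of $(s, \vec{\mu})$ and using that $\tilde{b}$ annihilates $\grad \tilde{\mathbb{W}}_{\Gamma}$; and \cref{eq:h1-postprocess} is \cref{eq:h2-mixed-problem-fem-2} with $\bdd{\eta} = \grad \tilde{u}$. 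The converse mirrors the proof of \cref{lem:old-method-satisfies-new-cont}: existence is the forward direction, while for uniqueness one sets $F_1 \equiv F_2 \equiv 0$, deduces $\tilde{z}_X \equiv 0$ from \cref{eq:tildezx-def}, obtains $\bdd{\gamma}_X = \grad w_X$ with $w_X \in \mathbb{W}_{\Gamma}$ from \cref{eq:stokes-system-2} and \ref{hp:gradient-exactness}, tests \cref{eq:stokes-system-1} with $\bdd{\psi} = \grad v$ for $v \in \mathbb{W}_{\Gamma}$ to get $B(w_X, v) = a(\grad w_X, \grad v) = 0$ and hence $w_X \equiv 0$ by the discrete inf-sup \ref{hp:b-bilinear-infsup}, then tests \cref{eq:stokes-system-1} once more with a field from \ref{hp:gradient-infsup} adapted to $(r_X, \vec{\kappa})$ to conclude $r_X = 0$, $\vec{\kappa} = \vec{0}$, and finally reads off $\tilde{w}_X \equiv 0$ from \cref{eq:h1-postprocess}. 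The field $\bdd{\zeta}_X$ satisfying $(\bdd{\zeta}_X, \bdd{\eta})_{\curl} = -\tilde{b}(\bdd{\eta}; r_X, \vec{\kappa})$ and the identification of $(\tilde{w}_X, \bdd{\gamma}_X, \bdd{\zeta}_X + \grad \tilde{z}_X)$ with the solution of \cref{eq:h2-mixed-problem-fem} then come from the discrete Riesz lemma and by running the computation in the proof of \cref{lem:new-method-satisfies-old-cont} backwards.

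The hard part is the discrete Riesz lemma: its continuous counterpart obtained surjectivity from a bona fide inf-sup of $\tilde{b}$, so I would need to check carefully that \ref{hp:gradient-infsup} really delivers a discrete inf-sup for $\tilde{b}$ on $\mathbbb{G}_{\Gamma}$ and that this is compatible with the splitting $\mathbbb{Q}_{\Gamma} = \grad \tilde{\mathbb{W}}_{\Gamma} + \mathbbb{G}_{\Gamma}$, so that the argument actually closes on $\mathbbb{Q}_{\Gamma}$ and not merely on $\mathbbb{G}_{\Gamma}$. A secondary technical point is the well-posedness of \cref{eq:tildezx-def}, which requires coercivity of $(\grad \cdot, \grad \cdot)$ on $\tilde{\mathbb{W}}_{\Gamma}$; this holds whenever $\Gamma_{cs} \neq \emptyset$, and in the degenerate case $\Gamma_f = \Gamma$ one works modulo constants, which is harmless because $F_2$ then annihilates constants.
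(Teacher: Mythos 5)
Your proposal is correct and takes essentially the same approach as the paper, which in fact disposes of this theorem in a single sentence by appealing to ``exactly the same arguments'' that proved \cref{lem:new-method-satisfies-old-cont,lem:old-method-satisfies-new-cont}; your write-up is precisely the requested translation, with the discrete analogue of \cref{lem:grad-tildew-perp-reisz-rep-cont} on $(\mathbbb{Q}_{\Gamma}, (\cdot,\cdot)_{\curl})$ correctly identified as the only new ingredient and \ref{hp:gradient-infsup} correctly flagged as what supplies the needed discrete inf-sup for $\tilde{b}$ (the paper makes the same point in the paragraph following the statement of \ref{hp:gradient-infsup}). One small inaccuracy in your final remark: when $\Gamma_f = \Gamma$, an arbitrary $F_2 \in \tilde{\mathbb{W}}_{\Gamma}^*$ need not annihilate constants --- rather, solvability of \cref{eq:tildezx-def} forces the compatibility $F_2(1)=0$, so the paper (and your proof) implicitly assumes either $|\Gamma_{cs}|>0$ or that $F_2$ is compatible; this does not affect the substance of the argument.
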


Incidentally, \cref{eq:tildezx-def,eq:stokes-system,eq:h1-postprocess} is precisely the method we derived in \cite{AinCP23KirchI} under the assumption that $\Omega$ is simply-connected and $F_1 \equiv 0$. \Cref{thm:two-methods-equivalent} shows that the method still produces the $\mathbb{W}_{\Gamma}$-conforming finite element approximation \cref{eq:h2-bilinear-fem} under the more general domain assumptions in \cref{sec:gen-problem-setting} and for nonzero $F_1 \in \mathbbb{G}_{\Gamma}^*$.



\end{document}